\documentclass{article}

\usepackage[english]{babel}

\usepackage[letterpaper,top=2cm,bottom=2cm,left=3cm,right=3cm,marginparwidth=1.75cm]{geometry}

\usepackage{amsmath}
\usepackage{amsthm}
\usepackage{amsfonts}
\usepackage{amssymb}
\usepackage{graphicx}
\usepackage{tikz}
\usetikzlibrary{decorations.pathreplacing}
\usepackage{subcaption}
\usetikzlibrary{arrows}
\usetikzlibrary{arrows.meta}
\usepackage{algorithm}
\usepackage{algpseudocode}
\usepackage{cases}
\usepackage{titlesec}
\usepackage{subcaption}
\usepackage{mathtools}
\usepackage{todonotes}
\usepackage{changes}
\usepackage{booktabs}
\usepackage{thm-restate}
\usepackage{url}
\usepackage{authblk}
\usepackage[short]{optidef}
\usepackage[colorlinks=true, allcolors=blue]{hyperref}
\usepackage{cleveref}
\definecolor{darkgreen}{rgb}{0,0.5,0}
\usepackage{accents}
\MakeRobust{\underaccent}
\newcommand{\ubar}[1]{\underaccent{\bar}{#1}}

\newtheorem{theorem}{Theorem}
\newtheorem{corollary}[theorem]{Corollary}
\newtheorem{definition}[theorem]{Definition}
\newtheorem{example}[theorem]{Example}
\newtheorem{remark}[theorem]{Remark}
\newtheorem*{remark*}{Remark}
\newtheorem{lemma}[theorem]{Lemma}
\newtheorem{problem}[theorem]{Problem}

\newcommand{\MA}{\emph{MA}}
\newcommand{\LA}{\emph{LA}}
\newcommand{\cUint}{\mathcal{U}^\square}

\usepackage[
    backend=biber,
    style=numeric-comp,
    ]{biblatex}
\addbibresource{literature.bib} 

\title{A robust optimization approach to flow decomposition}
\author[1,2]{Moritz Stinzendörfer}
\author[1]{Philine Schiewe}
\author[1,3]{Fabricio Oliveira\thanks{Corresponding author: fabol@dtu.dk}}

\affil[1]{Department of Mathematics and Systems Analysis, Aalto University, Otakaari 1, 02150 Espoo, Finland}
\affil[2]{Department of Mathematics, RPTU Kaiserslautern-Landau, Paul-Ehrlich-Str.~14, 67663 Kaiserslautern, Germany}
\affil[3]{DTU Management, Technical University of Denmark, Akademivej 358,
2800 Kgs. Lyngby, Denmark}

\begin{document}
\maketitle

\begin{abstract}
In this paper, we generalize the minimum flow decomposition problem (MFD) to incorporate uncertain edge capacities and tackle it from the perspective of robust optimization.
In the classical flow decomposition problem, a network flow is decomposed into a set of weighted paths from a fixed source node to a fixed sink node that precisely represents the flow distribution across all edges.
MFD problems permeate multiple important applications, including reconstructing genomic sequences to representing the flow of goods or passengers in distribution networks. Inspired by these applications, we generalize the MFD to an inexact case with bounded flow values, provide a detailed analysis, and explore different variants that are solvable in polynomial time. Moreover, we introduce the concept of robust flow decomposition by incorporating uncertain bounds and applying different robustness concepts to handle the uncertainty. Finally, we present two different adjustably robust problem formulations and perform computational experiments illustrating the benefit of adjustability.
\end{abstract}

\paragraph*{Keywords} inexact flow networks, minimum flow decomposition, robust optimization, adjustable robustness, combinatorial optimization

\section{Introduction} \label{sec:introduction}

Network flows are a fundamental concept in mathematics, particularly in the field of graph theory and optimization. A network is typically represented as a directed graph, with nodes representing junctions or points of interest and edges representing the pathways or connections between these nodes.
The primary goal in network flow problems is to determine an optimal way to send a certain amount of flow from a source node to a sink node or find a minimum cost flow. In the latter, each edge has a cost associated with transporting flow through it, and the objective is to minimize the total cost while satisfying flow requirements. These network flow problems have wide-ranging applications in fields such as transportation \cite{bunte2009overview}, telecommunications \cite{li2013survey}, logistics \cite{salimifard2022multicommodity}, and bioinformatics \cite{williams2019rna}, making them an essential area of study in both theoretical and applied mathematics.

Another important network flow problem is the \textit{minimum flow decomposition} (MFD), which involves decomposing the flow into a set of weighted paths that, once combined, recover how the flow is distributed through the network. This problem structure is present in bioinformatics applications such as multiassembly problems \cite{dias2022efficient}, which involve reconstructing genomic sequences from short substrings. For example, in RNA transcript assembly, sequenced substrings from a given sample are used to construct weighted graphs (known as splice or splicing graphs) where nodes represent exons (i.e., the genetic information that remains in the messenger RNA) and edges represent connections between exons. From the splice graph, one seeks to infer how many transcripts (sequences of exons) there are in the splice graph (i.e., weighted paths) such that their superposition recovers (or explains) the given splice graph.
However, real-world data often is marred with imprecision and measurement errors, making it challenging to form accurate flow networks, which is why the related research focuses on robustness against measurement errors \cite{dias2023accurate}.

Another sector where MFDs are also prevalent is transportation planning. In particular, 
public transport planning involves several stages, including line planning, timetabling, and vehicle scheduling. The latter stage, in particular, can benefit from the MFD by decomposing trip graphs (i.e., graphs representing the aggregated number of trips between nodes that represent locations) into a minimal number of paths, thereby optimizing vehicle utilization and reducing operational costs.
Despite the extensive literature on robustness considerations in public transport planning, most research focuses on timetabling and delay management, considering uncertainties such as travel times or disruptions. On the other hand, demand uncertainties are often addressed in applications focusing on more operational levels, such as vehicle routing problems. 

The above applications share as a common feature the latent need of being able to incorporate robustness requirements in their modeling, so that their recommendations are reliable in the face of the uncertainty in their input data. Aiming to address this need, in this paper, we explore the minimum flow decomposition problem (MFDP) from a robust optimization perspective and propose the notion of \emph{robust flow decomposition}. To the best of our knowledge, this is the first work in this direction.

Our main contributions can be summarized as follows:
\begin{itemize}
    \item we examine existing variants of MFDP with regard to different robustness concepts,
    \item we generalize the MFDP to the weighted inexact case with lower and upper bounds on the flow values, investigate the complexity of the resulting problem, and explore different variants that are solvable in polynomial time,
    \item we introduce the concept of robust flow decomposition by incorporating uncertain flows and discuss special cases with different robustness concepts,
    \item and we present two different adjustable problem formulations for which we develop a proof of concept, highlighting the benefit of adjustability in the uncertain case in a subsequent computational study.
\end{itemize}

The remainder of this paper is structured as follows: The connections to the related literature are discussed in Section \ref{sec:literatur}, while in Section~\ref{sec:def} we present the underlying technical background concerning the MFDP.
In Section \ref{sec:model}, we introduce the generalized deterministic problem considering inexact flows and provide complexity analyses of relevant variants. In Section~\ref{sec:robust_flows}, we develop robust minimum flow decomposition problems, starting with strict robustness in Section~\ref{sec:robustMFD}. 
Further, we propose two adjustable problem formulations in Section \ref{sec:adj} followed by a corresponding computational study in Section \ref{sec:study}.
Section \ref{sec:concl} concludes the paper.

\section{Literature review}\label{sec:literatur}

The task of optimally covering nodes, edges, or paths in a graph is a common problem that appears in many well-studied concepts in areas of graph theory and network analysis. One example is to find a \emph{minimum path cover (MPC)}, i.e., a set of directed paths with minimum cardinality that covers all vertices (or edges), in a digraph \cite{diestel2005graph}. Since the MPC consists of one path if and only if there is a Hamiltonian path in the corresponding graph, the MPC problem is NP-hard.

However, unlike the MFDP and its robust variants we investigate, the edges only need to be covered by a minimum number of paths, and there are no flow values that have to match weights associated with the paths.
Therefore, the MPC can be solved in polynomial time if the graph is acyclic, e.g., by transforming it into a matching problem \cite{ntafos1979path}.

The \emph{flow decomposition problem (FDP)} \cite{ahuja1988network}, on the other hand, describes the problem of decomposing a network flow into a set of weighted paths from a fixed source node to a fixed sink node such that their union precisely represents the flow distribution across all edges.
One way to compute such a flow decomposition in a \emph{directed acyclic graph (DAG)} is to iteratively remove weighted paths that utilize at least one edge.
The authors of \cite{ahuja1988network} show that this results in a flow decomposition with at most $|E|$ paths (where $|E|$ is the number of edges in the graph) and can be computed in polynomial time.
Nonetheless, when aiming to minimize the number of paths in the decomposition, referred to as \emph{minimum flow decomposition (MFD)}, the problem becomes NP-hard \cite{vatinlen2008simple} and is even hard to approximate \cite{hartman2012split}.
As a result, efficient heuristics have been developed, such as the greedy methods in \cite{vatinlen2008simple} or an improved version in \cite{shao2017theory}.
In the former, the proposed algorithms iteratively choose the shortest path or the path with the largest flow value in the remaining flow until the entire flow is decomposed, while in the latter, the flow graph is first modified before applying the greedy method.


Recently, the authors of \cite{dias2022efficient} presented an exact solution approach for the MFDP on DAGs, which is based on an integer linear programming (ILP) approach using only a quadratic number of variables instead of enumerating all possible paths in the network (which would lead to an exponential number of variables).
They show that their proposed method consistently solves instances on both simulated and real datasets significantly faster than previous approaches. These data sets originate from applications in bioinformatics, such as \emph{multiassembly problems}, where MFD plays a key role \cite{xing2004multiassembly}.

\subsection{Inexact flow decomposition in transcript assembly}

The multiassembly problem involves reconstructing various genomic sequences from short substrings (called sequenced \emph{reads}) \cite{xing2004multiassembly}, such as RNA transcript assembly \cite{li2011isolasso,shao2017theory}, i.e., recovering the set of full-length transcripts.
Initially, reads are used to construct a weighted graph, namely a splice graph in RNA transcript assembly, where nodes represent exons, and edges denote connections between them.
The nodes and edges are weighted, indicating, e.g., their relative abundance in the reads.
The multiassembly problem is then finding a set of weighted paths that best explain the graph's weights \cite{tomescu2015explaining,shao2017theory}, and it has been shown that the MFD has a very high accuracy on perfect data \cite{dias2023accurate}.

However, the assumption of perfect data does not hold in practice. Due to different sources of uncertainty and measurement errors, splice graphs derived from experimental data are unlikely to form a flow network \cite{williams2019rna}.
To address this problem, the authors of \cite{williams2019rna} assign intervals of possible weights to the edges instead of exact weights, which forms a so-called \emph{inexact flow network}.
The resulting problem of identifying a minimal set of paths that explain these intervals is called the \emph{minimum inexact flow decomposition problem (MIFDP)}, which is also considered in \cite{dias2022efficient}. Other approaches aim to identify a minimum set of weighted paths that minimizes the sum of squared differences between the weight of each edge and the sum of weights of paths passing through it or to assign slack variables to each path to move error handling away from the individual edges \cite{dias2023accurate}.

All these approaches have in common that they try to be robust against possible measurement errors. This means that although all data is known, it is likely to be error-prone, which is why, in most cases, no classical flow decomposition can be achieved. However, to ensure that the paths in the decomposition form feasible flows, the requirement that the measured weight of each edge and the sum of weights of paths passing through it must match is relaxed.
In contrast to the typical assumptions in robust optimization, a scenario or the real data is not revealed here. While this is suitable in the context of bioinformatics applications, robustness in the classical sense of finding a feasible solution that is robust for every possible realization in an uncertainty set is better suited to enforce robustness requirements in transportation problems or public transport planning, where the MFD also plays a key role.

\subsection{Inexact flow decomposition in public transportation planning}

In public transport planning, the operational costs are mainly determined by the \emph{vehicle schedule} \cite{schiewe2022integrated}, which is part of the traditional sequential planning approach \cite{huisman2005operations,guihaire2008transit}.
Typically, this begins on a strategic level with a demand analysis to build passenger demand matrices, which are then used to define the overall structure of the transit network, including necessary infrastructure such as stations or depots.

In the next step, called \emph{line planning}, the types of services (e.g., bus, rail, tram) and their routes, stops, and frequencies are determined to meet the demand and policy goals \cite{schobel2012line}.
This is followed by the operational planning stage, where the \emph{timetabling} step focuses on creating detailed schedules for each line, resulting in so-called \emph{trips} with departure and arrival times for each station as well as start and end stations \cite{lusby2011railway}.
Then, vehicle scheduling allocates vehicles to trips, ensuring that all scheduled trips are covered while optimizing vehicle utilization, often characterized by minimal fleet size or operational costs \cite{bunte2009overview}.
The former can be addressed by decomposing the trip graph into a minimal number of paths \cite{bunte2009overview}, and also in recent applications such as electric buses with limited driving ranges, decomposition methods are used for vehicle scheduling \cite{olsen2022study}.

Although there is a large body of literature dedicated to robustness in public transport, most of it concentrates on timetabling and delay management by considering uncertain travel times or disruptions \cite{cacchiani2012nominal,parbo2016passenger,lusby2018survey}. 
Demand uncertainties are often considered in earlier stages, e.g., stop planning \cite{cacchiani2020robust}, or in other applications such as \emph{vehicle routing problems} \cite{lee2012robust}.

\subsection{Robust optimization}
The concept of \emph{robust optimization} was introduced by \cite{soyster1973convex,ben1998robust}, which usually refers to finding an optimal solution that is feasible for all possible realizations (often represented as scenarios) induced by an uncertainty set.
For an extensive overview of theoretical properties and applications, we refer the reader to \cite{beyer2007robust,bertsimas2011theory,gabrel2014recent}.
Instead of finding the best robust solution for the worst-case scenario, in \emph{regret robustness} \cite{kouvelis2013robust}, the objective is to minimize the difference (\emph{regret}) between the robust solution value and the best objective value we could have achieved if the realization had been known beforehand.

Since these approaches still require the solution to be feasible for all possible scenarios, the concept of \emph{light robustness} \cite{schobel2014generalized} weakens the feasibility constraint by only searching for solutions that are feasible and ``good enough'' in the \emph{nominal case} (e.g., the most likely case).
An alternative idea to overcome over-conservatism is to bound the uncertainty set by parameterizing the allowed cumulative deviation from the nominal case, resulting in a \emph{budgeted uncertainty set} \cite{bertsimas2004price}.
A similar approach is followed for the \emph{optimization problems under controllable uncertainty} \cite{ley2023robust}, where the uncertainty is allowed to be reduced at a given cost.

The concept \emph{adjustable robustness} \cite{ben2004adjustable} takes into account that, in many applications, it is possible to adjust a part of the decisions after the uncertainty is unveiled.
The idea is to divide the variables into those that must be decided before the uncertainty is unveiled and those that can be decided after its realization.
For an overview on adjustable robust optimization, we refer the reader to \cite{yanikouglu2019survey}.

As previous concepts mainly concentrate on applications in bioinformatics, the focus to date has primarily been on dealing with data inaccuracies. To the best of our knowledge, this is the first work that considers MFDs in the context of classical robustness concepts. This opens up many other application possibilities that are typically represented using flow networks and in which uncertainty plays a role. In public transport, for example, it is now also possible to take demand uncertainties into account in later planning steps, such as vehicle scheduling. In general, this allows robust and, depending on the application, resource-efficient solutions to be achieved despite uncertain data.

\section{Minimum flow decomposition}\label{sec:def}



Let $G(V,E)$ be a directed acyclic graph (DAG), where $s \in V$ represents the \emph{source} node with no incoming edges and $t \in V$ the \emph{sink} (\emph{target}) node with no outgoing edges.
Let $f_{uv}$ be the corresponding non-negative integer flow value of edge $(u,v)$ for all $(u,v) \in E$, i.e., $f: E \rightarrow \mathbb{N}_{\geq 0}$.
Definition \ref{def:network_flow} provides a formal description of a \emph{flow network}. An example of a flow network is given in Figure \ref{fig:examplea}.
\begin{definition}[Flow network] \label{def:network_flow}
    The tuple $G(V,E,f)$ is called a \emph{flow network} if for every $v \in V \setminus \{s,t\}$ the conservation of flow 
    \begin{equation}
        \sum_{u:(u,v) \in E } f_{uv} = \sum_{w:(v,w) \in E } f_{vw}
    \end{equation}
    is satisfied.
\end{definition}

Given such a flow network, it can be decomposed into a set of \emph{$s$-$t$-paths}, where each path has an associated positive weight (Figure \ref{fig:exampleb}).
Definition \ref{def:k-flow_decomposition} describes how $s$-$t$-paths form a $k$-flow decomposition.

\begin{definition}[$k$-flow decomposition]\label{def:k-flow_decomposition}
    For a given flow network $G(V,E,f)$, a set of $s$-$t$-paths $\mathcal{P} = (P_1,...,P_k)$ with corresponding positive weights $w = (w_1,...,w_k)$ is called \emph{$k$-flow decomposition} if
    \begin{equation}
        \sum_{i\colon (u,v) \in P_i} w_i = f_{uv}
    \end{equation}
    holds for all $(u,v) \in E$. 
\end{definition}

\begin{remark} \label{def:netflow_decomposition}
Let us define $| f |$ of a flow $f$ is the net flow $\sum_{u:(u,t) \in E } f_{ut}$ into the sink node $t$ \cite{goldberg1989network}. As we assume integer flow values, the trivial $| f |$-flow decomposition with $| f |$ paths of weight one \cite{ahuja1988network} provides an upper bound for $k$.
\end{remark}

\begin{problem}[Minimum flow decomposition problem] \label{problem:MFD}
    For a given flow network $G(V,E,f)$, the \emph{minimum flow decomposition problem (MFDP)} is to find a $k$-flow decomposition $\mathcal{P} = (P_1,...,P_k)$ such that $k$ is minimized.
\end{problem}

Example \ref{ex:mfd} provides an illustration of Problem \ref{problem:MFD}, including examples of a given network flow (cf. Definition \ref{def:network_flow}) and a feasible 5-flow decomposition (cf. Definition \ref{def:k-flow_decomposition}).

\begin{example}[\cite{dias2023accurate}]\label{ex:mfd}
    Assume we are given the graph in Figure \ref{fig:examplea} with flow values on the corresponding edges.
    Then this flow network can be decomposed into the five colored paths $P_1, P_2, P_3, P_4, P_5$ shown in Figure \ref{fig:exampleb} with the associated weights $w_1=1,w_2=2,w_3=2,w_4=2,w_5=3$.
    Moreover, this is an optimal solution to MFDP with a solution value of $k=5$.

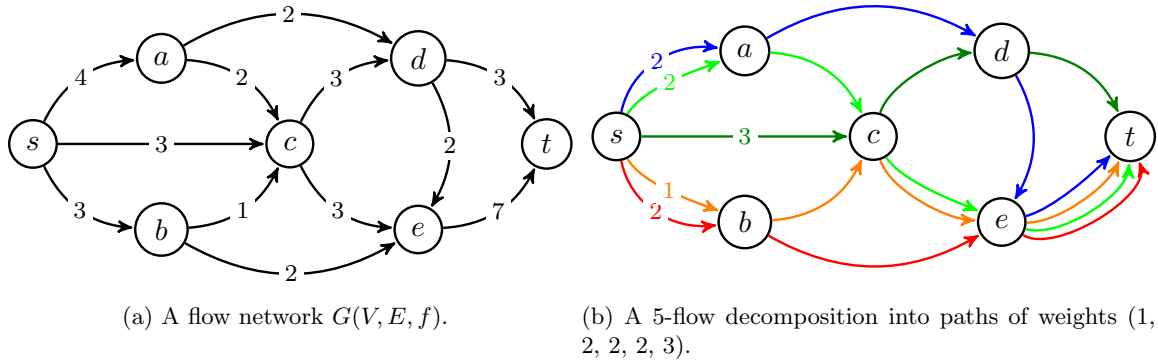
\begin{figure}[h]
	\begin{center}
        \begin{subfigure}{0.49\linewidth}
	   \resizebox{1\textwidth}{!}{
			\begin{tikzpicture}[->,>=stealth',shorten >=0.5pt,node distance=1.7cm, thick,main node/.style={circle,draw,font=\sffamily\small}]
   \tikzset{edgelabel/.style={fill=white,font=\footnotesize,inner sep=2pt}}
			\begin{scope}[every node/.style={circle,thick,draw}]
                \node (0) at (0,0) {$ s $};
                \node (1) at (1.5,1) {$ a $};
			    \node (2) at (1.5,-1) {$ b $};
			    \node (3) at (3,0) {$ c $};
			    \node (4) at (4.5,1) {$ d $};
			    \node (5) at (4.5,-1) {$ e $};
                \node (6) at (6,0) {$ t $};
		    \end{scope}
		    \path[every node/.style={font=\sffamily\small}]
    				(0) edge [bend left] node [midway,edgelabel] 	{$ 4 $} 		(1)
    					edge [bend right] node [midway,edgelabel] 	    {$ 3 $} 		(2)
                        edge node [midway,edgelabel] 	    {$ 3 $} 		(3)
					(1) edge [bend left] node [midway,edgelabel] 	{$ 2 $} 					(3)
						edge [bend left] node [ midway,edgelabel] 	{$ 2 $} 		(4)
					(2) edge [bend right] node [midway,edgelabel] 	{$ 1 $} 					(3)
                        edge [bend right] node [midway,edgelabel] 	{$ 2 $} 					(5)
					(3) edge [bend left] node [midway,edgelabel] 	{$ 3 $} 					(4)
                        edge [bend right] node [midway,edgelabel] 	{$ 3 $} 					(5)
					(4) edge [bend left] node [midway,edgelabel] 	{$ 2 $} 					(5)
						edge [bend left] node [midway,edgelabel] 	{$ 3 $} 		(6)
					(5) edge [bend right] node [midway,edgelabel] 	{$ 7 $} 					(6);
					
		\end{tikzpicture}}
        \caption{A flow network $G(V,E,f)$.\\~}
        \label{fig:examplea}
\end{subfigure}
\begin{subfigure}{0.49\linewidth}
	\resizebox{1\textwidth}{!}{
			\begin{tikzpicture}[->,>=stealth',shorten >=0.5pt,node distance=1.7cm, thick,main node/.style={circle,draw,font=\sffamily\small}]
   \tikzset{edgelabel/.style={fill=white,font=\footnotesize,inner sep=2pt}}
			\begin{scope}[every node/.style={circle,thick,draw}]
                \node (0) at (0,0) {$ s $};
                \node (1) at (1.5,1) {$ a $};
			    \node (2) at (1.5,-1) {$ b $};
			    \node (3) at (3,0) {$ c $};
			    \node (4) at (4.5,1) {$ d $};
			    \node (5) at (4.5,-1) {$ e $};
                \node (6) at (6,0) {$ t $};
		    \end{scope}
      
		    \path[every node/.style={font=\sffamily\small}]
    				(0) edge [bend left, blue, out=45, in=140, looseness=1.0] node [midway,edgelabel] 	{$ 2 $} (1)
                        edge [bend left, green, out=30, in=170, looseness=0.7] node [midway,edgelabel] 	{$ 2 $} (1)
    					edge [bend left, red, out=315, in=-140, looseness=1.0] node [midway,edgelabel] 	{$ 2 $} (2)
                        edge [bend left, orange, out=340, in=-170, looseness=0.7] node [midway,edgelabel] 	{$ 1 $} (2)
                        edge [darkgreen] node [midway,edgelabel] 	{$ 3 $}	(3)
					(1) edge [bend left, green] node [left] 	{} 					(3)
						edge [bend left,blue] node [ above ] 	{} 		(4)
					(2) edge [bend right, orange] node [ below ] 	{} 					(3)
                        edge [bend right, red] node [ below ] 	{} 					(5)
					(3) 
                        edge [bend left, darkgreen, out=-320, in=150, looseness=0.7] node [] 	{} (4)
                        edge [bend left, orange, out=320, in=-150, looseness=0.7] node [] 	{} (5)
                        edge [bend left, green, out=335, in=-172, looseness=0.7] node [] 	{} (5)
					(4) edge [bend left, blue] node [ right ] 	{} 					(5)
						edge [bend left, darkgreen] node [ above ] 	{} 		(6)
					(5) edge [bend left, blue, out=340, in=-170, looseness=0.7] node [] 	{} (6)
                        edge [bend left, orange, out=325, in=-148, looseness=0.7] node [] 	{} (6)
                        edge [bend left, green, out=310, in=-126, looseness=1.0] node [] 	{} (6)
                        edge [bend left, red, out=295, in=-104, looseness=1.1] node [] 	{} (6);

		\end{tikzpicture}}
        \caption{A 5-flow decomposition into paths of weights (1, 2, 2, 2, 3).}
        \label{fig:exampleb}
		\end{subfigure}
\end{center}
	\caption{Example of a flow network and a flow decomposition into 5 $s$-$t$-paths \cite{dias2023accurate}.}
	\label{fig:example}
	\end{figure}	
\end{example}

For a given upper bound $\overline{K}$ of $k$ (for example, $| f |$, cf. Remark \ref{def:k-flow_decomposition}), Problem \ref{problem:MFD} can be posed as the corresponding MIP formulation\footnote{Note that \eqref{eq:MFD} is formally an MIP only if we linearize constraints \eqref{eq:MFD4}, which can be done using standard linear reformulation techniques (see, e.g., \cite{dias2022efficient})}.
\begin{mini!}
{x,y,w}{\sum_{i=1}^{\overline{K}} y_i \label{eq:ObjectiveExample1}}
{\label{eq:MFD}}{}
\addConstraint{\sum_{v:(s,v) \in E} x_{svi}}{=y_i, \label{eq:MFD1}}{ \forall\; i \in \{1,...,\overline{K}\}}
\addConstraint{\sum_{u:(u,t) \in E} x_{uti}}{=y_i, \label{eq:MFD2}}{ \forall\; i \in \{1,...,\overline{K}\}}
\addConstraint{\sum_{(u,v) \in E} x_{uvi} - \sum_{(v,w) \in E} x_{vwi}}{=0, \label{eq:MFD3}}{ \forall\; i \in \{1,...,\overline{K}\}, v \in V \setminus \{s,t\}}
\addConstraint{\sum_{i \in \{1,...,\overline{K}\}} w_i x_{uvi}}{= f_{uv}, \label{eq:MFD4}}{\forall\; (u,v) \in E}
\addConstraint{w_i}{\in \mathbb{Z}^+,  \label{eq:MFD5}}{ \forall\; i \in \{1,...,\overline{K}\}}
\addConstraint{x_{uvi}}{\in \{0,1\}, \quad \label{eq:MFD6}}{\forall\; (u,v) \in E, i \in \{1,...,\overline{K}\}}
\addConstraint{y_i}{\in \{0,1\}, \label{eq:MFD7}}{\forall\; i \in \{1,...,\overline{K}\}.}
\end{mini!}    

Formulation \eqref{eq:MFD} is essentially the formulation in \cite{dias2022efficient}. Here, the binary variable $y_i$ indicates whether the path $i$ with the corresponding weight $w_i \in \mathbb{Z}^+$ is part of the flow decomposition ($y_i=1$) or not ($y_i=0$). The variable $x_{uvi} \in \{0,1\}$ is equal to one if edge $(u,v) \in E $ is part of path $i \in \{1,...,\overline{K}\}$. Every path $i$ starts at node $s$ (i.e., has exactly one variable $x_{svi}$ set to 1 for some edge $(s, v) \in E$) and ends at node $t$ (has exactly one variable $x_{vti}$ set to 1, for some edge $(v, t) \in E$). This is enforced by \eqref{eq:MFD1} and \eqref{eq:MFD2}, respectively. Constraint \eqref{eq:MFD3}, together with \eqref{eq:MFD1} and \eqref{eq:MFD2}, characterize a path from $s$ to $t$. Finally, \eqref{eq:MFD4} enforces that the combined weights $w_i$ of all paths $i$ that traverse the edge $(u,v)$ match the flow $f_{u,v}$. Constraints \eqref{eq:MFD5}-\eqref{eq:MFD7} define variable domains. 
Note that one can straightforwardly add symmetry-breaking constraints to \eqref{eq:MWIFD}, e.g., by imposing ordering constraints on the variables $y$ and $w$. 

The MFDP can be generalized by relaxing constraints \eqref{eq:MFD4}, allowing the summed path weights of each edge $(u,v)$ to lie in a range $[f^l_{uv},f^u_{uv}]$ instead of matching a flow value $f_{uv}$.
Consequently, the underlying graph is now an \emph{inexact flow network} \cite{williams2019rna}, meaning that we have a lower bound $f^l_{uv}$ and an upper bound flow value $f^u_{uv}$ corresponding to each edge $(u,v) \in E$.
The resulting problem, the minimum inexact flow decomposition problem (MIFDP) is summarized in Problem \ref{prob:MIDP}. Note that both MFDP and its generalization, MIFDP, are NP-hard \cite{dias2023accurate}.


\begin{problem}[Minimum inexact flow decomposition problem \cite{williams2019rna}] \label{prob:MIDP}
    For a given inexact flow network $G(V,E,f^l,f^u)$, the minimum inexact flow decomposition problem (MIFDP) is to find a set of $s$-$t$-paths $\mathcal{P} = (P_1,...,P_k)$ with minimum cardinality and associated positive weights $w = (w_1,...,w_k)$ such that
    \begin{equation}
        f^l_{uv} \leq \sum_{i: (u,v) \in P_i} w_i \leq f^u_{uv}
    \end{equation}
    holds for all $(u,v) \in E$. 
    We obtain its corresponding MIP formulation by replacing constraints \eqref{eq:MFD4} in \eqref{eq:MFD} with \eqref{eq:MIFD}
    \begin{equation}\label{eq:MIFD}
        \begin{aligned}
    		&& \sum_{i \in \{1,...,\overline{K}\}} w_i x_{uvi}                   &\geq f^l_{uv} &&  \forall\; (u,v) \in E,   \\
            && \sum_{i \in \{1,...,\overline{K}\}} w_i x_{uvi}                   &\leq f^u_{uv} &&  \forall\; (u,v) \in E.
        \end{aligned}
    \end{equation} 
\end{problem}%

\section{Minimum weighted inexact flow decomposition problem}\label{sec:model}
In this section, we introduce a generalized version of the MIFDP by minimizing the weighted sum of the associated weights $a_w \sum_{i=1}^{|\mathcal{P}|} w_i$ in addition to the number of paths $a_y |\mathcal{P}|$.
Their presence in the objective function with corresponding weights $a_y, a_w \in \mathbb{R}_{\geq0} $ motivates the analysis of the subproblems resulting from different weight combinations.
While $a_y > 0 $, $a_w = 0$ corresponds to the MIFDP, we show how $a_y = 0 $, $a_w > 0$ can be solved in polynomial time.
Moreover, polynomial time can also be achieved for $a_y > 0 $, $a_w = 0$ if the upper bounds on the flow values are neglected.

\subsection{General formulation}\label{subsec:generalformulation}
Using the MIFDP as a starting point, the weight variables $w$ no longer necessarily match the flow values since the summed path weights of each edge $(u,v)$ lie in a range $[f^l_{uv},f^u_{uv}]$ instead.
Depending on the magnitude of the upper bounds in $f^u$, the entries of $w$ can take on large values, even if smaller ones would be sufficient to cover the corresponding lower bounds.
Therefore, it is reasonable to extend the objective function by the sum of all weights, as in many related applications, the weights are desired to be as small as possible, e.g., if they correspond to a vehicle's capacity.
The resulting problem is formalized next.
\begin{problem}[Minimum weighted inexact flow decomposition problem]
    For a given inexact flow network $G(V,E,f^l,f^u)$, the \emph{minimum weighted inexact flow decomposition problem (MWIFDP)} is to find a set of $s$-$t$-paths $\mathcal{P} = (P_1,...,P_k)$ with associated positive weights $w = (w_1,...,w_k)$ such that
    \begin{equation*}
        f^l_{uv} \leq \sum_{i: (u,v) \in P_i} w_i \leq f^u_{uv}
    \end{equation*}
    holds for all $(u,v) \in E$ and the weighted sum 
    \begin{equation}
        a_y |\mathcal{P}| + a_w \sum_{i=1}^{|\mathcal{P}|} w_i ,
    \end{equation}
    with weights $a_y, a_w \in \mathbb{R}_{\geq0} $ is minimized.
\end{problem}

For a given upper bound $\overline{K}$ of $k$, the corresponding MIP formulation for the MWIFDP is given by
\begin{mini!}
{x,y,w}{\sum_{i=1}^{\overline{K}} (a_y y_i + a_w w_i) \label{eq:MWIFD0}}
{\label{eq:MWIFD}}{}
\addConstraint{\sum_{v:(s,v) \in E} x_{svi}}{=y_i, \label{eq:MWIFD1}}{ \forall\; i \in \{1,...,\overline{K}\}}
\addConstraint{\sum_{u:(u,t) \in E} x_{uti}}{=y_i, \label{eq:MWIFD2}}{ \forall\; i \in \{1,...,\overline{K}\}}
\addConstraint{\sum_{(u,v) \in E} x_{uvi} - \sum_{(v,w) \in E} x_{vwi}}{=0, \label{eq:MWIFD3}}{ \forall\; i \in \{1,...,\overline{K}\}, v \in V \setminus \{s,t\}}
\addConstraint{\sum_{i \in \{1,...,\overline{K}\}} w_i x_{uvi}}{\geq f^l_{uv}, \label{eq:MWIFD4}}{\forall\; (u,v) \in E}
\addConstraint{\sum_{i \in \{1,...,\overline{K}\}} w_i x_{uvi}}{\leq f^u_{uv}, \label{eq:MWIFD5}}{\forall\; (u,v) \in E}
\addConstraint{w_i}{\in \mathbb{Z}^+,  \label{eq:MWIFD6}}{ \forall\; i \in \{1,...,\overline{K}\}}
\addConstraint{x_{uvi}}{\in \{0,1\}, \quad \label{eq:MWIFD7}}{\forall\; (u,v) \in E, i \in \{1,...,\overline{K}\}}
\addConstraint{y_i}{\in \{0,1\}, \label{eq:MWIFD8}}{\forall\; i \in \{1,...,\overline{K}\}.}
\end{mini!}    

\begin{corollary} \label{cor:cor7}
    For $a_y > 0 $ and $a_w = 0$, MWIFDP is NP-hard.
\end{corollary}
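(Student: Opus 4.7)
The plan is to observe that the corollary is an immediate consequence of the already-cited NP-hardness of MIFDP. Specifically, setting $a_w = 0$ eliminates the path-weight term from the objective \eqref{eq:MWIFD0}, so it reduces to $a_y \sum_{i=1}^{\overline{K}} y_i$; since $a_y > 0$ is a positive constant, an optimal solution minimizes $\sum_{i=1}^{\overline{K}} y_i = |\mathcal{P}|$, which is exactly the MIFDP objective.

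Concretely, I would first formalize the reduction: given any instance $G(V,E,f^l,f^u)$ of MIFDP, interpret it as an instance of MWIFDP with the same graph and bounds and any chosen $a_y > 0$, $a_w = 0$. The feasibility constraints \eqref{eq:MWIFD1}--\eqref{eq:MWIFD5} coincide exactly with the inexact-flow feasibility conditions of MIFDP, and the objective differs only by the positive multiplicative constant $a_y$. Hence optimal solutions of the two problems are in one-to-one correspondence with equal support $\mathcal{P}$, and any algorithm for this special case of MWIFDP immediately solves MIFDP with only constant overhead (i.e., the reduction is polynomial, in fact trivial).

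I would then conclude by invoking the NP-hardness of MIFDP, which was stated earlier in the excerpt (``both MFDP and its generalization, MIFDP, are NP-hard \cite{dias2023accurate}''). Since the above is a polynomial-time (indeed, identity) reduction from the NP-hard MIFDP to MWIFDP with $a_y > 0$, $a_w = 0$, the latter is NP-hard as well.

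There is no real obstacle in this proof; the only thing to be careful about is making explicit that $a_y > 0$ is needed so that the reduced objective still genuinely penalizes the number of paths (if $a_y = 0$ the problem degenerates to a feasibility question). The entire argument is essentially one line once this observation is made, which is why the statement is phrased as a corollary rather than a theorem.
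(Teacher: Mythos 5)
Your argument is correct and matches the paper's reasoning exactly: the paper likewise observes that for $a_y > 0$ and $a_w = 0$ the MWIFDP coincides with the MIFDP (identical constraints, objective scaled by the positive constant $a_y$), and then invokes the known NP-hardness of MIFDP. Your write-up is just a more explicit spelling-out of the same one-line reduction.
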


Notice that MWIFDP reduces to MIFDP for $a_y > 0$ and $a_w = 0$ and is therefore NP-hard. In Lemma \ref{lemma:MWIFDP-NP-Hard}, we show that this is also the case for $a_y, a_w > 0$.
\begin{lemma}\label{lemma:MWIFDP-NP-Hard}
    For $a_y, a_w > 0$, MWIFDP is NP-hard since it includes the MFDP.
\end{lemma}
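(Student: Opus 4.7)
The plan is to exhibit a polynomial-time reduction from the classical MFDP to MWIFDP with $a_y, a_w > 0$, exploiting the fact that the total path weight of any decomposition is pinned down by the input flow.

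First I would take an arbitrary MFDP instance, a flow network $G(V,E,f)$, and build the MWIFDP instance on the same graph by collapsing the lower and upper bound to the given flow, i.e.\ $f^l_{uv} = f^u_{uv} = f_{uv}$ for every $(u,v) \in E$. This turns constraints \eqref{eq:MWIFD4}--\eqref{eq:MWIFD5} into the equality $\sum_{i:(u,v)\in P_i} w_i = f_{uv}$, so that the feasible solutions of this MWIFDP instance are exactly the $k$-flow decompositions of $G(V,E,f)$ in the sense of Definition~\ref{def:k-flow_decomposition}.

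The key observation is that for any such feasible decomposition $(\mathcal{P},w)$, each path $P_i$ uses exactly one edge outgoing from $s$, so
\begin{equation*}
    \sum_{i=1}^{|\mathcal{P}|} w_i \;=\; \sum_{(s,v)\in E}\;\sum_{i:(s,v)\in P_i} w_i \;=\; \sum_{(s,v)\in E} f_{sv} \;=\; |f|.
\end{equation*}
Hence the $a_w$-term in the objective \eqref{eq:MWIFD0} contributes the constant $a_w |f|$, and minimizing $a_y|\mathcal{P}| + a_w\sum_i w_i$ is equivalent to minimizing $|\mathcal{P}|$, which is precisely the MFDP objective. Since the reduction is trivially polynomial and MFDP is NP-hard \cite{vatinlen2008simple}, NP-hardness of MWIFDP for $a_y, a_w > 0$ follows.

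I do not anticipate a real obstacle here: the reduction is essentially syntactic, and the only substantive step is the constancy of $\sum_i w_i$ on the feasible set, which follows directly from flow conservation at $s$. The one thing I would be careful to state explicitly is that equality of the interval bounds forces the MWIFDP feasible set to coincide with the MFDP feasible set, so that the instance is indeed well-defined and the reduction preserves optimality on the nose rather than only up to approximation.
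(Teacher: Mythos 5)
Your proof is correct and follows essentially the same route as the paper: both reduce from MFDP by setting $f^l_{uv}=f_{uv}=f^u_{uv}$ and both observe that flow conservation at $s$ fixes $\sum_i w_i$ to the total flow value, so the objective degenerates to minimizing the number of paths. Your write-up just makes the constancy of the weight term slightly more explicit.
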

\begin{proof}
 Given an instance $G(V,E,f)$ of MFDP, we can construct an instance $G(V,E,f^l,f^u)$ of MWIFDP using $f^l_{uv}=f_{uv}=f^u_{uv} $ for all $(u,v) \in E$.
 Then, the summed weights of the paths are equal to the summed flow values of all outgoing edges of $s$ (the complete $s$-$t$-flow value) and build both a lower and upper bound on $\sum_{i=1}^{\overline{K}}w_i$.
 Consequently, finding a minimum flow decomposition, which is NP-hard \cite{vatinlen2008simple}, minimizes the objective.
\end{proof}

Counterintuitively, we show that the MWIFDP is not NP-hard when $a_y = 0 $ and $a_w > 0$. To show this, we first revisit the \emph{minimum-cost flow problem} \cite{orlin1988faster} in capacitated networks.

\begin{definition}[Minimum-cost flow problem \cite{orlin1988faster}]
Given a tuple $G(V,E,d,c)$, where $G(V,E)$ is a digraph, $d(v) \in \mathbb{Z}$ represents the demand ($d(v) \leq 0 $) or the supply ($d(v) > 0 $) of node $ v \in V$ and $c(u,v) \geq 0$ is the cost associated with every edge $(u,v) \in E$.
The \emph{minimum-cost flow problem (MCFP)}  is to find a flow $f \in \mathbb{N}_{\geq0}^{|E|}$ with minimum cost $\sum_{(u,v) \in E } c(u,v) \cdot f_{uv} $ such that
\begin{equation}
    \sum_{u:(u,v) \in E } f_{uv} - \sum_{w:(v,w) \in E } f_{vw} = d(v)
\end{equation}
is satisfied for all $v \in V.$
In capacitated networks, we have $G(V,E,d,c,\ubar{f},\bar{f})$ with lower $\ubar{f}_{uv}$ and upper bounds $\bar{f}_{uv}$ on the flow $f_{uv}$ associated with each edge $(u,v) \in E$, which also have to be fulfilled.
\end{definition}

\begin{lemma}\label{lemma:1}
    For $a_y = 0 $ and $a_w > 0$, MWIFDP can be solved in polynomial time.
\end{lemma}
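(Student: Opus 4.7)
The plan is to exploit a conservation-of-flow identity to rewrite the objective as the value of an $s$-$t$ flow, and then reduce to a classical polynomial-time problem. The key observation is that for any feasible decomposition $\mathcal{P}=(P_1,\ldots,P_k)$ with weights $(w_1,\ldots,w_k)$, the induced flow $f_{uv}=\sum_{i:(u,v)\in P_i} w_i$ is a genuine $s$-$t$-flow satisfying $f^l_{uv}\le f_{uv}\le f^u_{uv}$, and each path $P_i$ uses exactly one outgoing edge of $s$. Therefore
\[
\sum_{i=1}^{k} w_i \;=\; \sum_{i=1}^{k}\sum_{v:(s,v)\in E}[\,(s,v)\in P_i\,]\,w_i \;=\; \sum_{v:(s,v)\in E} f_{sv} \;=\; |f|.
\]
So with $a_y=0$ and $a_w>0$, the MWIFDP objective $a_w\sum_i w_i$ is just $a_w\cdot |f|$, and the problem of finding an \emph{optimal} decomposition reduces to finding a flow of minimum value in the inexact flow network.

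First I would formalize this equivalence as a two-way reduction: given any MWIFDP-feasible decomposition, the induced flow has value equal to the objective; conversely, given any integer flow $f$ satisfying $f^l\le f\le f^u$, the standard path-decomposition procedure of Ahuja et al.~produces in polynomial time a set of at most $|E|$ paths with positive integer weights whose sum equals $|f|$, and this set is MWIFDP-feasible. Thus the MWIFDP optimum equals $a_w$ times the minimum value of an integer $s$-$t$-flow in $G(V,E,f^l,f^u)$.

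Next I would invoke a polynomial-time algorithm for the \textbf{minimum-value feasible flow problem} with lower and upper bounds on edges, which is a classical special case of the minimum-cost flow problem (for instance by adding a return arc $(t,s)$ of cost $-1$ and all other costs $0$, with bounds $[0,\infty)$, and solving the resulting MCFP on $G(V,E,d\equiv 0,c,\ubar f=f^l,\bar f=f^u)$ via any polynomial MCFP algorithm such as the one referenced for minimum-cost flows). Total-unimodularity of the node-arc incidence matrix, together with integer bounds, guarantees an integer optimal flow $f^*$.

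Finally, I would apply the path-decomposition of $f^*$, assemble the resulting paths and weights into an MWIFDP solution, and argue optimality from the identity $\sum_i w_i=|f^*|$ combined with the converse reduction. The only subtlety I expect is the bookkeeping required to treat $\overline{K}$: the decomposition yields at most $|E|$ paths, so choosing any $\overline{K}\ge |E|$ (which is polynomial) is sufficient and the formulation \eqref{eq:MWIFD} admits this optimum. All three steps, equivalence, minimum-value flow computation, and decomposition, run in polynomial time, proving the lemma.
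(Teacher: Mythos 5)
Your proposal is correct in substance and takes essentially the same route as the paper: both reduce the $a_y=0$, $a_w>0$ case to computing a minimum-value feasible integer flow subject to the bounds $f^l\le f\le f^u$ via a minimum-cost flow computation, and then decompose that flow into weighted paths (the paper uses the trivial unit-weight decomposition, you use the Ahuja et al.\ decomposition into at most $|E|$ paths; since $a_y=0$ either works). Your return-arc circulation formulation is just the standard equivalent of the paper's super-source/super-sink construction with the cost-$1$ arc $(s',s)$ and the zero-cost bypass $(s',t')$. One slip to fix: to \emph{minimize} the flow value the return arc $(t,s)$ must carry cost $+1$ (so that minimizing total cost minimizes $f_{ts}=|f|$); with cost $-1$ as written you would compute a \emph{maximum} feasible flow instead. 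Your explicit identity $\sum_i w_i = \sum_{v:(s,v)\in E} f_{sv} = |f|$ is a clean way to justify the reduction that the paper leaves more implicit.
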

\begin{proof}
    This particular case can be reduced to the minimum-cost flow problem in capacitated networks, which can be solved in polynomial time \cite{orlin1988faster,wayne1999polynomial,zhu2011minimal}.
    For a given inexact flow network $G(V,E,f^l,f^u)$, we show that solving the MWIFDP with $a_y = 0 $ and $a_w > 0$ corresponds to finding a minimal cost flow in the following graph. 
    
    For $G'(V',E',d,c,\ubar{f},\bar{f})$, let $V'=V \cup \{s'\} \cup \{t'\} $ be the node set with a super-source $s'$ and super-sink $t'$ and $E'=E \cup \{ (s',s), (t,t'), (s',t')\} $ the corresponding edges.
    We set the supply and demand of $s'$ and $t'$ equal to the summed upper bounds on the flow values $\sum_{(u,v) \in E} f^u_{uv} =: M$, i.e., $d(s') = M = - d(t')$, while we have $d(v)=0$ for all other nodes $v \in V$.
    For the cost function $c$, we have $c(s',s)=1$ and $c(e)=0$ for all $e \in E' \setminus \{(s',s)\}$, and the lower and upper bounds, $\ubar{f}$, $\bar{f}$, on the flow are defined as $(\ubar{f}_{uv},\bar{f}_{uv}) := (f^l_{uv},f^u_{uv})$ for $(u,v) \in E$ and $(\ubar{f}_{uv},\bar{f}_{uv}) := (0,M)$ for $ (u,v) \in \{ (s',s), (t,t'), (s',t') \}$.

    In other words, we extend the original graph $G(V,E,f^l,f^u)$ so that all flow units required to fulfill the constraints on the bounds $f^l$ and $f^u$ have to use edge $(s',s)$ at a cost of $1$ each.
    The remaining flow can use the edge $(s',t')$ with a cost of $0$.
    An illustration of $G'(V',E',d,c,\ubar{f},\bar{f})$ can be found in Figure \ref{fig:mincost}.

    \begin{figure}[h!]
	\begin{center}
	   \resizebox{0.9\textwidth}{!}{
			\begin{tikzpicture}[->,>=stealth',shorten >=0.5pt,auto,node distance=1.7cm, thick,main node/.style={circle,draw,font=\sffamily\small}]
			\begin{scope}[every node/.style={circle,thick,draw}]
				\node (7) at (-3,0) {$ s' $};
				\node (8) at (9,0) {$ t' $};
                \node (0) at (1,0) {$ s $};
                \node (6) at (5,0) {$ t $};
		    \end{scope}
		    \begin{scope}[every node/.style={thick, black}]
			    \node (11) at (-3,-0.6) {\scriptsize $ d(s') =M$};
			    \node (22) at (9,-0.6) {\scriptsize $d(t') = -M$};
                \node (1) at (1.5,0.7) {};
			    \node (2) at (1.5,-0.7) {};
			    \node (3) at (2,0) {};
			    \node (4) at (4,0.7) {};
			    \node (5) at (4,-0.7) {};
		    \end{scope}
		    \path[every node/.style={font=\sffamily\footnotesize}]
    				(0) edge [bend left,dashed] node [ left ] 	{} 		(1)
    					edge [bend right,dashed] node [left] 	    {} 	(2)
                        edge [dashed]  node [below] 	    {} 	(3)
					(4) edge [bend left,dashed] node [ above ] 	{} 	(6)
					(5) edge [bend right,dashed] node [ below ] 	{} 	(6)
					(7) edge  node [ below ] {$c(s',s)=1$} node [ above ] {$(\ubar{f}_{s',s},\bar{f}_{s',s}) = (0,M)$}	(0)
					(6) edge  node [ below ] {$c(t,t')=0$} node [ above ] {$(\ubar{f}_{t,t'},\bar{f}_{t,t'}) = (0,M)$}	(8)
					(7) edge [bend left=50] node [ below ] {$c(s',t')=0$} node [ above ]{$(\ubar{f}_{s't'},\bar{f}_{s't'}) = (0,M)$} 	(8);

            \draw [-,decorate,decoration={brace,amplitude=5pt,mirror,raise=4ex}]
            (1,-0.4) -- (5,-0.4) node[midway,yshift=-4em]{$G(V,E,f^l,f^u)$};
		\end{tikzpicture}}
        \caption{Illustration of transformed graph $G'(V',E',d,c,\ubar{f},\bar{f})$ for the MCFP to solve the MWIFDP for $a_y = 0 $ and $a_w > 0$ in $ G(V,E,f^l,f^u) $.}
	   \label{fig:mincost}
        \end{center}
\end{figure}
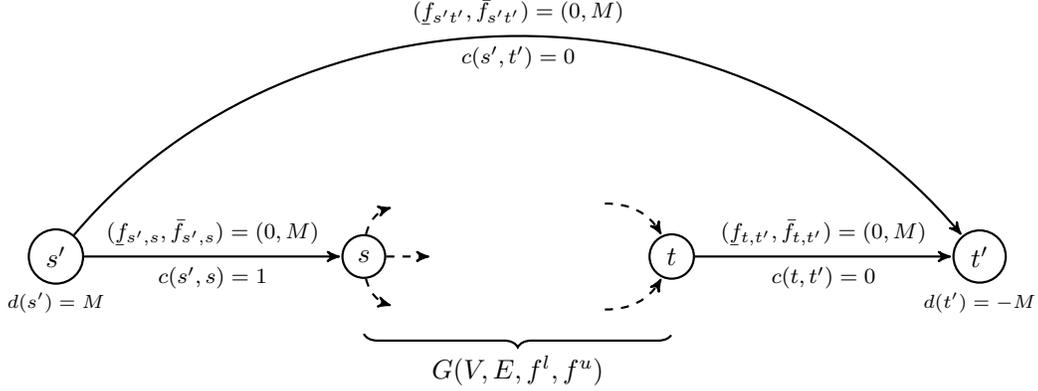
    
    If we now consider a minimum cost flow $f^m$ in $G'(V',E',d,c,\ubar{f},\bar{f})$, we can construct a feasible flow $f$ for the MWIFDP in the original graph $G(V,E,f^l,f^u)$ by removing edges $(s',s), (t,t'), (s',t')$ with corresponding flow values. 
    That means we have a flow $f \in \mathbb{N}_{\geq0}^{|E|}$ with $ f_{uv} := f^m_{uv} $ for $(u,v)\in E $ that is a feasible solution for MWIFDP by fulfilling the same flow and bound constraints.
    Since each flow unit in $f$ traverses edge $(s',s)$ with cost $1$, the total flow value equals the flow cost $c(f^m) := \sum_{(u,v) \in E'} c(u,v) \cdot f^m_{uv} = f^m_{s's}$ and is therefore minimal.
    Finally, notice that the number of $s$-$t$-paths can be arbitrarily large without affecting the objective function. In turn, we can trivially decompose $f$ into $c(f^m)$ $s$-$t$-paths each with weight $1$, which minimizes $a_w  \sum_{i=1}^{\overline{K}} w_i = a_w c(f^m)$ in \eqref{eq:MWIFD}.
\end{proof}

\subsection{Formulation without upper bounds}
For the remainder of this section, we neglect the upper bounds on the flow, i.e., we consider \eqref{eq:MWIFD} without constraints \eqref{eq:MWIFD5}.
This allows us to develop additional complexity results that are useful for both the MWIFDP and the strictly robust counterparts of the different variants to be presented in the next section.

\begin{lemma}\label{lemma:3}
    For $a_y = 0 $ and $a_w > 0$, the MWIFDP without upper bounds can be solved in polynomial time.
\end{lemma}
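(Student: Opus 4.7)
The plan is to mirror the reduction used in Lemma \ref{lemma:1}, adapting it to the fact that constraints \eqref{eq:MWIFD5} are absent. The first step is to observe that, for any set of $s$-$t$-paths $(P_1,\dots,P_k)$ with weights $(w_1,\dots,w_k)$, the sum $\sum_{i=1}^{k} w_i$ equals the net $s$-$t$ flow value $|f|$, since each path contributes exactly $w_i$ units of flow leaving $s$. Consequently, with $a_y = 0$ and $a_w > 0$, minimizing the objective is equivalent to finding a feasible integer $s$-$t$ flow of minimum total value subject only to the lower-bound constraints $f_{uv} \geq f^l_{uv}$ on each edge.

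Next, I would construct an auxiliary capacitated network $G'(V',E',d,c,\ubar{f},\bar{f})$ analogous to the one in Figure \ref{fig:mincost}, with super-source $s'$, super-sink $t'$, and the extra edges $(s',s)$, $(t,t')$, $(s',t')$. The difference to Lemma \ref{lemma:1} is that on the original edges $(u,v) \in E$ we set $(\ubar{f}_{uv},\bar{f}_{uv}) := (f^l_{uv}, M)$ with $M$ chosen large enough to accommodate any optimal solution, e.g.\ $M := |V|\cdot \sum_{(u,v)\in E} f^l_{uv}$, so the artificial bound does not cut off any optimal flow. The supply/demand at $s'$ and $t'$ is set to $M$, and $c(s',s) = 1$ with all other costs zero, exactly as before.

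The equivalence argument then proceeds verbatim as in the proof of Lemma \ref{lemma:1}: an optimal MCFP solution $f^m$ restricts to a feasible flow $f$ on $G$ of value $c(f^m) = f^m_{s',s}$, and conversely any feasible flow $f$ for our problem extends to a feasible solution for the MCFP of the same cost. Since MCFP in capacitated networks is solvable in polynomial time \cite{orlin1988faster}, so is the problem of finding $f$. Having obtained $f$, we decompose it trivially into $|f|$ unit-weight $s$-$t$-paths as in the proof of Lemma \ref{lemma:1}, which realizes $a_w \sum_i w_i = a_w |f|$ and is therefore optimal.

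The main conceptual step is the first observation that dropping the upper bounds collapses the objective to the plain flow value; once that is noticed, the rest is bookkeeping. The only subtlety is choosing $M$ large enough to guarantee that the artificial upper bound does not exclude an optimal solution, which is easy since any feasible flow satisfying the lower bounds admits a decomposition of total weight at most $\sum_{(u,v)\in E} f^l_{uv}\cdot |V|$.
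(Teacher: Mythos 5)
Your proposal is correct and follows essentially the same route as the paper: both reduce the case $a_y=0$, $a_w>0$ without upper bounds to a minimum-cost flow on the augmented network of Lemma~\ref{lemma:1} with lower bounds $f^l_{uv}$ and unconstrained capacities on the original edges, then decompose the resulting flow into unit-weight paths. The only cosmetic difference is that you replace the paper's upper bound of $\infty$ by an explicit (and sufficiently large) big-$M$, which changes nothing substantive.
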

\begin{proof}
We follow the proof of Lemma \ref{lemma:2}, but since we have no upper bounds, we set them to $\infty $ for the MCFP, i.e., we have $(\ubar{f}_{uv},\bar{f}_{uv}) = (f^l_{uv},\infty)$. Consequently, the same reasoning also applies here: Since the flow constraints have to be satisfied and the lower bound for each $(u,v) \in E $ is $\ubar{f}_{uv}$, each edge (with $ \ubar{f}_{uv}> 0 $) is covered by one or more paths with a summed weight of at least $\ubar{f}_{uv}$.
Since each $s$-$t$-path is a $s'$-$t'$ path that uses edge $(s',s)$ with $c(s,s')=1$, the summed weights of all $s$-$t$-paths are minimized.
\end{proof}

If we consider the alternative case with $a_y > 0$ and $a_w = 0$, i.e., the MIFDP without upper bounds, we obtain a different complexity result than in Section \ref{subsec:generalformulation} (Corollary \ref{cor:cor7}).
\begin{lemma}\label{lemma:4}
    For $a_y > 0$ and $a_w = 0$, the MWIFDP without upper bounds can be solved in polynomial time.
\end{lemma}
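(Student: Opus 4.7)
The plan is to reduce the problem to a minimum-value $s$-$t$-flow problem with integer lower bounds on the edges, which is solvable in polynomial time. The key observation I would make first is that with $a_w = 0$ and no upper bounds, the weights $w_i$ neither appear in the objective nor are constrained from above; therefore I can always rescale each weight to any sufficiently large positive integer, for instance $W := \max_{(u,v) \in E} f^l_{uv}$. Under this choice, the constraint $\sum_{i: (u,v)\in P_i} w_i \geq f^l_{uv}$ is met as soon as some path in $\mathcal{P}$ traverses $(u,v)$. Conversely, any feasible decomposition must visit every edge $(u,v)$ with $f^l_{uv} > 0$, since otherwise the sum on the left is empty. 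This reduces MWIFDP under the stated assumptions to the covering problem of finding the minimum number of $s$-$t$-paths whose union contains every edge of $E^+ := \{(u,v) \in E : f^l_{uv} > 0\}$.

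Next, I would recast this covering problem as a minimum-value $s$-$t$-flow problem with lower bounds. Concretely, set $\ell_{uv} := 1$ for $(u,v) \in E^+$ and $\ell_{uv} := 0$ otherwise, with no upper bounds on the edges. Given $k$ covering paths, the flow $f_{uv} := |\{i : (u,v) \in P_i\}|$ is $s$-$t$-feasible, respects the lower bounds, and has value $k$, so the minimum flow value $v^*$ satisfies $v^* \leq k^*$. Conversely, since $G$ is a DAG, any integer $s$-$t$-flow $f$ decomposes in polynomial time into $|f|$ unit-weight $s$-$t$-paths, and these paths collectively cover every edge of $E^+$; hence $k^* \leq v^*$, and the two values coincide.

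Finally, the minimum $s$-$t$-flow with integer lower bounds can be computed in polynomial time via the textbook reduction to a maximum-flow problem on an auxiliary network, analogous to the reference \cite{orlin1988faster} already cited in Lemma \ref{lemma:1}. Decomposing the optimal flow into unit-weight $s$-$t$-paths and then assigning weight $W$ (or, more economically, $\max_{(u,v) \in P_i} f^l_{uv}$) to each path yields an optimal MWIFDP solution. The step I expect to be the most delicate is the initial reduction: I would need to argue carefully that the two hypotheses $a_w = 0$ and the absence of upper bounds are jointly essential, since either an active $a_w$ term (penalizing slack weight) or a finite upper bound (preventing a single path from being inflated to cover several heavy edges at once) breaks the rescaling argument and, consistent with the NP-hardness results in Lemmas for the general case, takes the problem out of polynomial time. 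Once this reduction is secured, the remainder is a direct application of classical network-flow algorithms.
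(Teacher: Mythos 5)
Your proposal is correct and follows essentially the same route as the paper: exploit $a_w=0$ to inflate all weights to a single large value, reduce the problem to covering the edges with $f^l_{uv}>0$ by a minimum number of $s$-$t$-paths, and solve that as a flow problem with lower bound $1$ on those edges and $0$ elsewhere. The only cosmetic difference is that you phrase the final step as a minimum-value flow with lower bounds solved via max-flow, whereas the paper encodes the same quantity as a minimum-cost flow in the augmented graph $G'$ of Lemma \ref{lemma:1} with unit cost on the edge $(s',s)$; these are equivalent polynomial-time formulations.
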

\begin{proof}
    Due to the fact that the weights $w_i$ can be arbitrarily large without affecting the objective function, we can choose
\begin{equation*}
    w_i := \sum_{(u,v) \in E} f^l_{uv}
\end{equation*}
for $ i = 1,..., \overline{K}$ and the problem reduces to finding a minimal number of $s$-$t$-paths covering all edges with $f^l_{uv} > 0 $.

Once again, we can use the proof of Lemma \ref{lemma:1} to show that this corresponds to finding a minimal cost flow. However, there is a modification that is required for the lower bounds $\ubar{f}$, as we only want to cover the edges that have a positive flow. 
Therefore, it is sufficient to set $1$ as the lower bound for these.
Consequently, we have $(\ubar{f}_{uv},\bar{f}_{uv}) := (1,\infty)$ for all $(u,v) \in E$ with $f^l_{uv} > 0 $ and $(\ubar{f}_{uv},\bar{f}_{uv}) := (0,\infty)$ for $ (u,v) \in \{ (s',s), (t,t'), (s',t') \} \cup \{ (u,v) \in E: f^l_{uv} = 0 \}$.

Considering an optimal minimum-cost flow in this modified graph, 
each relevant edge is covered by at least one $s'$-$t'$-path (and thus $s$-$t$-path), while the first edge of each $s'-t'$ path has cost $1$.
Consequently, we minimize the number of $s$-$t$-paths covering all relevant edges.
\end{proof}

Lastly, we show that, although both previous cases can be solved in polynomial time, this is not the case for $a_y, a_w > 0$.

\begin{lemma}
    For $a_y, a_w > 0$, MWIFDP without upper bounds is NP-hard.
\end{lemma}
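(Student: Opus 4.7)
The plan is to reduce from the classical \emph{MFDP}, which is NP-hard \cite{vatinlen2008simple}. Given an MFDP instance $G(V,E,f)$, I would construct an instance of MWIFDP without upper bounds on the same graph by setting $f^l_{uv} := f_{uv}$ for every $(u,v) \in E$ and choosing coefficients $a_y := 1$ and $a_w := |f|+1$, using the fact that $|f|$ is an upper bound on the number of paths in any decomposition \cite{ahuja1988network}. The intuition is that with $a_w > a_y \cdot |f|$ the objective first minimizes the total path weight $W := \sum_{i} w_i$ and only afterwards the number of paths; I will then argue that driving $W$ down to its minimum feasible value $|f|$ already forces the edge constraints to hold with equality, so the MWIFDP optimum corresponds exactly to a minimum flow decomposition of $f$.

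The key structural step is: \emph{any feasible MWIFDP solution attaining $W = |f|$ satisfies $\sum_{i:\,(u,v) \in P_i} w_i = f_{uv}$ on every edge}. Writing $g_{uv} := \sum_{i:\,(u,v) \in P_i} w_i$, the $s$-$t$-path structure of the decision variables makes $g$ a non-negative flow on $G$ of value $|g| = W$ (each path contributes one outgoing edge at $s$, one incoming edge at $t$, and balanced in/out at every interior vertex). MWIFDP feasibility gives $g \geq f$ componentwise, and $W = |f|$ then forces $|g - f| = 0$. Hence $g - f$ is a non-negative flow of value zero on a DAG satisfying conservation at all interior vertices; processing vertices in topological order and applying conservation edge by edge yields $g - f \equiv 0$.

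To close the reduction, observe that every feasible solution satisfies $W \geq |f|$ (by summing the lower-bound constraints over the outgoing edges of $s$). Thus either the optimum is attained with $W = |f|$, in which case the structural claim turns it into a classical flow decomposition whose minimum number of paths is exactly the MFDP optimum $k^*$, giving objective $a_y k^* + a_w |f|$; or it has $W \geq |f|+1$, with objective at least $a_y + a_w(|f|+1) > a_y|f| + a_w|f| \geq a_y k^* + a_w |f|$ by the choice of $a_w$. Hence the MWIFDP optimum equals $a_y k^* + a_w |f|$, from which $k^*$ is immediate, and the reduction is polynomial in the input size of the MFDP instance. The main obstacle I anticipate is precisely the DAG zero-flow step: without upper bounds, nothing a priori prevents excess mass on interior edges, and it is only the tightness $W = |f|$ together with $g$ being a bona fide path-induced flow that eliminates the slack.
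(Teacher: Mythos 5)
Your reduction is internally sound---the observation that $g_{uv}:=\sum_{i:(u,v)\in P_i}w_i$ is itself a flow of value $W$, so that $g\geq f$ together with $W=|f|$ forces $g\equiv f$ on a DAG, is correct and nicely argued---but it proves a weaker statement than the lemma. You set $a_w:=|f|+1$, i.e., you choose the objective coefficients as a function of the MFDP instance. This establishes NP-hardness only for the version of the problem in which $(a_y,a_w)$ is part of the input; it says nothing about the problem for an arbitrary \emph{fixed} pair $a_y,a_w>0$, e.g.\ $a_y=a_w=1$. Read in parallel with the surrounding results (``for $a_y=0$, $a_w>0$ \dots{} polynomial'', ``for $a_y>0$, $a_w=0$ \dots{} polynomial''), the lemma asserts hardness for every fixed positive pair, and the lexicographic trick cannot deliver that: your case analysis needs $a_w>a_y(k^*-1)$ to rule out solutions with $W\geq|f|+1$, which fails for fixed coefficients once $k^*$ is large. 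Without that, there is a genuine trade-off between the number of paths (which, in the absence of upper bounds, can be driven down to a polynomial-time-computable path-cover optimum by inflating the weights) and the total weight, and nothing in your argument controls where the optimum sits between the two extremes.

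The paper's proof sidesteps this by reducing from 3-PARTITION with a three-node gadget: $3b$ parallel $(s,o)$-edges with lower bounds $s(a)$ and $b$ parallel $(o,t)$-edges with lower bound $B$. There, $3b$ is a lower bound on the number of paths and $bB$ a lower bound on the total weight for \emph{every} feasible solution, and both bounds are attained simultaneously if and only if a 3-partition exists. Because the yes-instances minimize both criteria at once, the conclusion holds for all positive $a_y,a_w$ without tuning them. If you want to keep your MFDP-based route, you would need an analogous ``simultaneous optimality'' characterization rather than a coefficient chosen to dominate the path term; otherwise, state explicitly that you are proving hardness with $(a_y,a_w)$ as part of the input.
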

\begin{proof}
    We follow the proof of Proposition 2 in \cite{vatinlen2008simple}.
    Therefore, we show that the NP-hard 3-PARTITION problem \cite{garey1979computers} can be reduced to MWIFDP without upper bounds.
    
    An instance $(A,s)$ of 3-PARTITION consists of $3b$ Elements $a \in A$, where each element $a$ is associated with a size $s(a) \in \mathbb{N}_{\geq 1}$ with $\sum_{a \in A } s(a) = bB$ and $\frac{B}{4} < s(a) < \frac{B}{3}$.
    To decide whether there exists a partition of $A$ into $b$ disjoint subsets of size $B$ is known to be NP-hard.

    Now, we construct an instance $G(V,E,f^l)$ of MWIFDP without upper bounds associated with the instance $(A,s)$ of 3-PARTITION.
    We have $V = \{s,o,t \}$, $3b$ parallel edges $e'_a $ of the form $(s,o)$ corresponding to element $a \in A$ with $f^l_{e'_a} := s(a)$ and $b$ parallel edges $e''_i $ of the form $ (o,t)$ with $i \in \{1,...,b\}$ and $f^l_{e''_i} := B$.
    Evidently, we need at least $3b$ paths to cover the lower bounds of all parallel edges, i.e., 
    \begin{equation*}
        a_y |\mathcal{P}| \geq a_y \times 3b.
    \end{equation*}
    Since the sum of the lower bounds of the outgoing edges of $s$ is $\sum_{a \in A } f^l_{e'_a} = \sum_{a \in A } s(a) = bB$, the summed weights of the paths must therefore be at least $bB$, i.e.,
    \begin{equation*}
        a_w \sum_{i=1}^{|\mathcal{P}|} w_i \geq a_w \times bB.
    \end{equation*}
Following the same reasoning as in \cite{vatinlen2008simple}, there is a partition of $A$ into $b$ disjoint subsets with size $B$ if and only if there is a $3b$-flow decomposition into paths with a total weight of $bB$.
This means that the optimal objective value of MWIFDP without upper bounds for instance $G(V,E,f^l)$ is equal to
    \begin{equation*}
        a_y |\mathcal{P}| + a_w \sum_{i=1}^{|\mathcal{P}|} w_i = a_y \times 3b + a_w \times bB.
    \end{equation*}
 \end{proof}

\section{Robust minimum flow decomposition}\label{sec:robust_flows}

When considering uncertainty and (inexact) flow networks in general, it is natural that the flow values are the source of uncertainty. To provide formulations that can take into account the uncertainty associated with MFD flow values, we make use of two different notions of robustness, which, for the sake of completeness, we briefly describe next.

\subsection{Strict and adjustable robustness}\label{subsec:robustness}

Let $ \mathcal{U} \subset \mathbb{R}^N$ be an uncertainty set, i.e., a set of possible realizations for the uncertain parameter $ \xi \in \mathcal{U}$. For a given optimization under uncertainty problem of the form 
\begin{mini}
{x}{f(x)}
{\label{eq:uncdef}}{}
\addConstraint{F(x,\xi)}{\leq 0}
\addConstraint{x}{\in \mathbb{R}^n,}
\end{mini}    
with $f(\cdot): \mathbb{R}^n \rightarrow \mathbb{R} $ and $F(\cdot, \xi): \mathbb{R}^n \rightarrow \mathbb{R}^m $, the \emph{strictly robust counterpart} \cite{wei2018characterizations} is defined by
\begin{mini}
{x}{f(x)}
{\label{eq:count}}{}
\addConstraint{F(x,\xi)}{\leq 0}{ \forall\; \xi \in \mathcal{U}}
\addConstraint{x}{\in \mathbb{R}^n.}
\end{mini}    
An optimal solution to \eqref{eq:count} is called a \emph{strictly robust solution}, i.e., we search for the best solution that is feasible for all scenarios $\xi \in  \mathcal{U}$.

Strict robustness is often too conservative, especially regarding the requirement that the solution must be feasible for all scenarios within the uncertainty set.
As a result, the literature is rich in alternative forms of robustness, which have also been investigated \cite{beyer2007robust,gabrel2014recent}.

In some cases, it is reasonable to assume that part of the decisions, the so-called \emph{wait-and-see decisions}, can be made after the scenario $\xi$ is revealed, while the \emph{here-and-now decisions} have to be decided before the scenario $\xi$ is observed.
More formally, we can distinguish between \emph{non-adjustable variables} $u\in \mathbb{R}^{p}$ (here-and-now decisions, as in the strict robust case) and \emph{adjustable variables} $v\in \mathbb{R}^{q}$ (wait-and-see decisions), whose values may depend on the realization $\xi$.
This concept is called \emph{adjustable robustness} \cite{ben2004adjustable} and for $x=(u,v) \in \mathbb{R}^n $ the \emph{adjustable robust counterpart} of \eqref{eq:count} is defined as
\begin{mini}
{u \in S_u}{ \min_{v \in \mathbb{R}^{q}} f(u,v)}
{\label{eq:count2}}{}
\addConstraint{F(u,v,\xi)}{\leq 0\;}{ \forall\; \xi \in \mathcal{U}},
\end{mini}  
with $p+q=n$ and $S_u = \{u \in \mathbb{R}^{p} : \forall\; \xi \in \mathcal{U}, \, \exists v \in \mathbb{R}^{q} : F(u,v,\xi) \leq 0 \}$ representing general feasibility conditions for the choice of $u$.
This means that we make the decisions represented by the non-adjustable variables $u$, observe $\xi$, and then make decisions associated with the adjustable variables $v$ depending on the scenario $\xi$ and $u$.
An optimal solution to \eqref{eq:count2} is called an \emph{adjustable robust solution}. Note that for $p=n$, i.e., $x = u \in \mathbb{R}^n$, the adjustable robust counterpart \eqref{eq:count2} reduces to the strictly robust counterpart \eqref{eq:count}.

Using interval-wise uncertainty, we can define the scenarios according to the deviation from a nominal scenario $\hat{\xi}$ (e.g., the expected value or the most likely scenario).
This means that the uncertainty set is of the form 
\begin{equation}
    \cUint = \left\{ \xi \in \mathbb{R}_{\geq0}^{N} : \xi_i \in [\hat{\xi}_i - \ubar{\delta}_{i},\hat{\xi}_i +\bar{\delta}_{i}], \text{ } \forall\; i \in \{1,...,N \} \right\},\label{def:interval-wise}
\end{equation}
for given $\ubar{\delta},\bar{\delta} \in \mathbb{R}_{\geq0}^{N}$, where $\xi_i$ corresponds to the $i-$th entry of scenario $\xi$.

To prevent the uncertainty set from leading to over-conservative solutions, the cumulative deviation from the nominal scenario can be bounded by the uncertainty budget $ \Gamma \in \mathbb{R}_{\geq0} $.
More precisely, $\mathcal{U}(\Gamma)$ is of the form 
\begin{equation}
    \mathcal{U}(\Gamma) = \left\{ \xi \in \mathbb{R}_{\geq0}^{N} : \xi_i \in [\hat{\xi}_i - \ubar{\delta}_{i},\hat{\xi}_i +\bar{\delta}_{i}] , \text{ } \forall\; i \in \{ 1,...,N \} ; \text{ } \sum_{j=1}^{N} |\xi_j - \hat{\xi}_j| \leq \Gamma  \right\} .\label{def:gamma}
\end{equation}
This is known as \emph{cardinality-constraint uncertainty set} or \emph{budgeted uncertainty set} \cite{bertsimas2004price}; hereinafter, we refer to it as $\Gamma$-\emph{uncertainty set}.

\subsection{Applying the notion of strict robustness to the MWIFDP}\label{sec:robustMFD}

If we apply the notion of strict robustness to the (exact) MFDP with uncertain flows, the strictly robust counterpart \eqref{eq:count} becomes infeasible if there are at least two different scenarios $ f' \neq f'' \in \mathcal{U} $ since there is at least one $(u,v)$ with $f_{uv}' \neq f_{uv}''$ and constraints \eqref{eq:MFD4} cannot be fulfilled.

In general, this is not the case for the MIFDP since the summed path weights of each edge $(u,v)$ must lie within a range $[f^l_{uv},f^u_{uv}]$ instead of matching exactly a flow value $f_{uv}$.
Consequently, it is possible that different scenarios 
$ (f^l, f^u)' \neq (f^l,f^u)'' \in \mathcal{U} $  form a feasible combination of constraints in \eqref{eq:MIFD}. 

\begin{remark}
Since we are dealing with non-negative integer flows, non-integer bounds on the flow values can also be reduced to integers by rounding up or down.
Consequently, it is sufficient to consider a discrete uncertainty set consisting of values in $\mathbb{N}_{\geq0}$ since any other uncertainty set for this problem can be reduced to its discrete version.
Therefore, we set
\begin{equation}\label{eq:Uadj}
    \mathcal{U} = \left\{ (f^{l,\xi},f^{u,\xi}) \in \left(\mathbb{N}_{\geq0}^{|E|} \right)^2 : \xi = 1,...,N \right\},
\end{equation}
i.e., we have $|\mathcal{U}|$ scenarios, each scenario $\xi$ consisting of a lower $f^{l,\xi}_e$ and an upper bound $f^{u,\xi}_e$ for all edges $e \in E$ (w.l.o.g., we assume $f^{l,\xi}_e \leq f^{u,\xi}_e $).
\end{remark}

Having the uncertainty set $\mathcal{U}$ described as \eqref{eq:Uadj} significantly enhances tractability in our setting since we can show that the uncertainty set can be equivalently replaced by a singleton containing the \emph{worst-case} realization $\tilde{f}$. This is stated in Lemma \ref{lem:one_worst_case}. 


\begin{lemma}\label{lem:one_worst_case}
    For the strictly robust counterpart of MWIFDP, the uncertainty set $\mathcal{U}$ in \eqref{eq:Uadj} can be reduced to $\tilde{\mathcal{U}}=\{\tilde{f}\}$, i.e., there exists one worst-case scenario $\tilde{f} = (f^{l,\tilde{\xi}},f^{u,\tilde{\xi}})$.
\end{lemma}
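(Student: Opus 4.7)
The key observation is that in the strictly robust counterpart of MWIFDP, uncertainty enters only through the right-hand sides of the edge-wise constraints (the analogues of \eqref{eq:MWIFD4} and \eqref{eq:MWIFD5}), and each such constraint is indexed by a single edge $(u,v)$. In particular, the path/flow-structure constraints \eqref{eq:MWIFD1}--\eqref{eq:MWIFD3} as well as the integrality constraints \eqref{eq:MWIFD6}--\eqref{eq:MWIFD8} and the objective \eqref{eq:MWIFD0} do not depend on $\xi$. The plan is to exploit this complete separability across scenarios and edges.

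Concretely, I would construct the worst-case scenario componentwise by setting, for every $(u,v) \in E$,
\begin{equation*}
    f^{l,\tilde{\xi}}_{uv} := \max_{\xi \in \{1,\dots,N\}} f^{l,\xi}_{uv}, \qquad f^{u,\tilde{\xi}}_{uv} := \min_{\xi \in \{1,\dots,N\}} f^{u,\xi}_{uv},
\end{equation*}
and define $\tilde{f} := (f^{l,\tilde{\xi}}, f^{u,\tilde{\xi}})$. Note that $\tilde{f}$ is built coordinatewise and in general does not belong to $\mathcal{U}$ itself; this is immaterial since the statement only requires that the robust feasible regions coincide.

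I would then verify both inclusions. For a candidate $(x,y,w)$, the strictly robust counterpart requires $\sum_{i} w_i x_{uvi} \geq f^{l,\xi}_{uv}$ to hold for every $\xi \in \{1,\dots,N\}$ and every $(u,v) \in E$, which is equivalent to $\sum_{i} w_i x_{uvi} \geq f^{l,\tilde{\xi}}_{uv}$ by definition of the maximum. Analogously, $\sum_{i} w_i x_{uvi} \leq f^{u,\xi}_{uv}$ for every $\xi$ is equivalent to $\sum_{i} w_i x_{uvi} \leq f^{u,\tilde{\xi}}_{uv}$. Since the remaining constraints and the objective are scenario-independent, the strictly robust counterpart over $\mathcal{U}$ and the nominal MWIFDP with parameters $\tilde{f}$ have identical feasible regions and identical objective values; hence they are equivalent, and we may replace $\mathcal{U}$ by $\tilde{\mathcal{U}} = \{\tilde{f}\}$.

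There is no real obstacle, since separability across edges and across bounds $f^l, f^u$ makes the reduction immediate; the only mild subtlety worth flagging is that $\tilde{f}$ need not lie in $\mathcal{U}$, and that if $f^{l,\tilde{\xi}}_{uv} > f^{u,\tilde{\xi}}_{uv}$ for some edge, then the strictly robust problem is infeasible, which is consistent with the fact that no $(x,y,w)$ can simultaneously satisfy all original scenarios on that edge.
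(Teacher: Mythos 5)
Your proof is correct and follows essentially the same route as the paper: both arguments observe that uncertainty enters only through the right-hand sides of the edge-wise bound constraints, define $f^{l,\tilde{\xi}}_{uv}$ and $f^{u,\tilde{\xi}}_{uv}$ coordinatewise as the maximum of the lower bounds and the minimum of the upper bounds, and conclude equivalence of the feasible regions. Your remark that $\tilde{f}$ need not lie in $\mathcal{U}$ matches the paper's own follow-up remark, so no changes are needed.
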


\begin{proof}
    Concerning the strictly robust counterpart of MWIFDP, only constraints \eqref{eq:MWIFD4}-\eqref{eq:MWIFD5} are affected by the uncertainty.
    More precisely, we have
        \begin{align}
    		&& \sum_{i \in \{1,...,\overline{K}\}} w_i x_{uvi}                   &\geq f^{l,\xi}_{uv} &&  \forall\; (u,v) \in E,  &&  \forall\; (f^{l,\xi},f^{u,\xi}) \in \mathcal{U}, \label{eq:rcl} \\
            && \sum_{i \in \{1,...,\overline{K}\}} w_i x_{uvi}                   &\leq f^{u,\xi}_{uv} &&  \forall\; (u,v) \in E, &&  \forall\; (f^{l,\xi},f^{u,\xi}) \in \mathcal{U} \label{eq:rclu}.
        \end{align}
        Every feasible solution to MWIFDP with $f^l_{uv} = \max_{\xi \in \{1,...,N\}} f^{l,\xi}_{uv} $ and $ f^u_{uv} =\min_{\xi \in \{1,...,N\}} f^{u,\xi}_{uv}$ for all $ (u,v) \in E $ is feasible for all other scenarios $(f^{l,\xi},f^{u,\xi}) \in \mathcal{U}$.
    Consequently, for $(u,v) \in E$ we can write
    \begin{align*}
    		&& \sum_{i \in \{1,...,\overline{K}\}} w_i x_{uvi}                   &\geq f^{l,\xi}_{uv}  &&  \forall\; (f^{l,\xi},f^{u,\xi}) \in \mathcal{U}, \\
            \iff && \sum_{i \in \{1,...,\overline{K}\}} w_i x_{uvi}                   &\geq \max_{\xi \in \{1,...,N\}} f^{l,\xi}_{uv}  =: f^{l,\tilde{\xi}}_{uv}&&   \\
        \end{align*}
        and
        \begin{align*}
    		&& \sum_{i \in \{1,...,\overline{K}\}} w_i x_{uvi}                   &\leq f^{u,\xi}_{uv}  &&  \forall\; (f^{l,\xi},f^{u,\xi}) \in \mathcal{U}, \\
            \iff && \sum_{i \in \{1,...,\overline{K}\}} w_i x_{uvi}                   &\leq \min_{\xi \in \{1,...,N\}} f^{u,\xi}_{uv}  =: f^{u,\tilde{\xi}}_{uv} .&&   \\
        \end{align*}
        This means that we can project the uncertainty set $\mathcal{U}$ to two axes per edge associated with the lower and upper bounds and determine the maximum and minimum, respectively.
        Consequently, only the worst-case scenario $\tilde{f} = (f^{l,\tilde{\xi}}_{uv},f^{u,\tilde{\xi}}_{uv})_{(u,v) \in E}$ can be considered instead of the entire uncertainty set $\mathcal{U}$.
    \end{proof}

\begin{remark}
    Note that $\tilde{f} \notin \mathcal{U}$ is possible. This is an artifact of the static (as opposed to adjustable) setting of our strict robustness, where robustness is consequently considered constraint-wise \cite{ben2009robust}.
\end{remark}


For a general discrete uncertainty set $\mathcal{U}$ as defined in \eqref{eq:Uadj}, it is not immediately clear how to identify a worst-case scenario $\tilde{f}$. For an exponentially large uncertainty set, computing a worst-case scenario can be of exponential complexity. However, for $\mathcal{U}^\square$ in \eqref{def:interval-wise} and for $\mathcal{U}(\Gamma)$ in \eqref{def:gamma}, $\tilde{f}$ can be identified in linear time. Thus, we show based on the results from Section~\ref{sec:model} that strictly robust solutions can be computed in polynomial time.



First, consider an interval-wise uncertainty set
\begin{equation}\label{eq:unc}
     \cUint= \left\{ (f^l,f^u) \in \left(\mathbb{N}_{\geq0}^{|E|} \right)^2 : f^l_{uv} \in [\ubar{f}^l_{uv},\bar{f}^l_{uv}], f^u_{uv} \in [\ubar{f}^u_{uv},\bar{f}^u_{uv}], \text{ } \forall\; (u,v)\in E \right\},
\end{equation}
with given $\ubar{f}^l,\bar{f}^l,\ubar{f}^u,\bar{f}^u \in \mathbb{N}_{\geq0}^{|E|}$.
Note that since the flow values are integers, \eqref{eq:Uadj} contains \eqref{eq:unc} and can be constructed by enumerating all elements in an interval-wise uncertainty set.

\begin{lemma}\label{lemma:robustU}
    For the strictly robust counterpart of MWIFDP, the uncertainty set $\cUint$ 
    as defined in \eqref{eq:unc} can be reduced to $\mathcal{U} = \{ (f^l,f^u) \} $ with $f^l_{uv} =\bar{f}^l_{uv}$ and $ f^u_{uv} =\ubar{f}^u_{uv} $ for all $ (u,v) \in E $.
\end{lemma}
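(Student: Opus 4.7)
The plan is to derive Lemma \ref{lemma:robustU} as a direct corollary of Lemma \ref{lem:one_worst_case}. First, I would invoke the remark made immediately before the lemma statement: since flow values are integer, the interval-wise uncertainty set $\cUint$ can be viewed as the finite set obtained by enumerating all integer tuples $(f^l, f^u)$ with $f^l_{uv} \in [\ubar{f}^l_{uv},\bar{f}^l_{uv}]$ and $f^u_{uv} \in [\ubar{f}^u_{uv},\bar{f}^u_{uv}]$ for every $(u,v) \in E$. In this way, $\cUint$ is of the form \eqref{eq:Uadj}, so Lemma \ref{lem:one_worst_case} is applicable.

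Next, I would instantiate the worst-case characterization from Lemma \ref{lem:one_worst_case} explicitly. That lemma gives, componentwise,
\begin{equation*}
    f^{l,\tilde{\xi}}_{uv} = \max_{\xi \in \{1,\dots,N\}} f^{l,\xi}_{uv}, \qquad f^{u,\tilde{\xi}}_{uv} = \min_{\xi \in \{1,\dots,N\}} f^{u,\xi}_{uv},
\end{equation*}
for all $(u,v)\in E$. Because $\cUint$ is a product of per-edge intervals and the lower- and upper-bound coordinates vary independently across scenarios, the maximum of $f^{l,\xi}_{uv}$ is attained at the right endpoint $\bar{f}^l_{uv}$, while the minimum of $f^{u,\xi}_{uv}$ is attained at the left endpoint $\ubar{f}^u_{uv}$. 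Substituting these values yields exactly the singleton $\mathcal{U} = \{(f^l,f^u)\}$ with $f^l_{uv} = \bar{f}^l_{uv}$ and $f^u_{uv} = \ubar{f}^u_{uv}$ claimed in the statement.

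There is no substantial obstacle here; the result reduces to two elementary observations: (i) the integrality assumption converts the interval-wise description into a finite scenario set so that Lemma \ref{lem:one_worst_case} applies verbatim, and (ii) the product structure of $\cUint$ lets us take the worst case edgewise without ever leaving the uncertainty set (although, as in the remark after Lemma \ref{lem:one_worst_case}, feasibility of $\tilde f$ as a scenario itself is not needed, since strict robustness is enforced constraint-wise). The only thing worth double-checking is that $\bar{f}^l_{uv} \le \ubar{f}^u_{uv}$ need not hold — but this merely reflects that the strictly robust counterpart may be infeasible when the induced worst-case intervals are empty, consistent with the existence assumption already stated in the paragraph preceding the lemma.
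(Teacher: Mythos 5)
Your proposal is correct and follows essentially the same route as the paper: it reduces the interval-wise set to the finite form \eqref{eq:Uadj} via integrality and then applies Lemma \ref{lem:one_worst_case}, with the edgewise maxima and minima attained at the interval endpoints $\bar{f}^l_{uv}$ and $\ubar{f}^u_{uv}$. The paper's own proof is a one-line appeal to Lemma \ref{lem:one_worst_case}; your version merely makes the intermediate steps explicit.
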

\begin{proof}
    It follows directly from the proof of \Cref{lem:one_worst_case} that
    every optimal solution to MWIFDP with $f^l_{uv} =\bar{f}^l_{uv}$ (maximum lower bound) and $ f^u_{uv} =\ubar{f}^u_{uv} $ (minimum upper bound) for all $ (u,v) \in E $ is a strictly robust solution.
\end{proof}


Now, if we restrict $\cUint$
in \eqref{eq:unc} to a $\Gamma$-uncertainty set $\mathcal{U}(\Gamma)$, i.e., $\mathcal{U}(\Gamma)$ must fulfill
\begin{equation}
    \sum_{(u,v)\in E} |f^l_{uv} - \hat{f}^l_{uv}| + |f^u_{uv} - \hat{f}^u_{uv}| \leq \Gamma
\end{equation}
concerning a nominal scenario $(\hat{f}^l,\hat{f}^u)$ with $\hat{f}^l_{uv} \in [\ubar{f}^l_{uv},\bar{f}^l_{uv}]$ and $ \hat{f}^u_{uv} \in [\ubar{f}^u_{uv},\bar{f}^u_{uv}]$ for $(u,v) \in E$, the resulting worst-case scenario is only slightly different.
\begin{lemma}\label{lemma:robustUG}
    For the strictly robust counterpart of MWIFDP, the $\Gamma$-uncertainty set $\mathcal{U}(\Gamma)$ can be reduced to $\mathcal{U}(\Gamma) = \{ (f^l,f^u) \} $ with $ f^l_{uv} = \min \{ \hat{f}^l_{uv} + \Gamma, \bar{f}^l_{uv} \} $ and $ f^u_{uv} = \max \{ \hat{f}^u_{uv} - \Gamma, \ubar{f}^u_{uv} \} $ for all $ (u,v) \in E $.
\end{lemma}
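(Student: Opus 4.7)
The plan is to mirror the constraint-wise reduction used in Lemma \ref{lem:one_worst_case} and Lemma \ref{lemma:robustU}, with the only new ingredient being how the budget $\Gamma$ interacts with the interval bounds on a single coordinate. Because the only constraints of \eqref{eq:MWIFD} that depend on the scenario are \eqref{eq:rcl} and \eqref{eq:rclu}, and each of these is indexed by a single edge $(u,v)$, the robust feasibility requirement decouples across edges and across the two bound directions. Hence for each edge $(u,v)$ I may replace the quantifier ``$\forall\, (f^{l,\xi},f^{u,\xi})\in\mathcal{U}(\Gamma)$'' by the single tightest right-hand side in each direction, exactly as in the proof of \Cref{lem:one_worst_case}.

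Concretely, I would first argue that \eqref{eq:rcl} and \eqref{eq:rclu} are equivalent to
\[
\sum_{i} w_i x_{uvi} \;\geq\; \max_{\xi\in\mathcal{U}(\Gamma)} f^{l,\xi}_{uv},\qquad \sum_{i} w_i x_{uvi} \;\leq\; \min_{\xi\in\mathcal{U}(\Gamma)} f^{u,\xi}_{uv},
\]
for every $(u,v)\in E$. The bulk of the proof is then to evaluate these two scalar optimization problems. For the first one, fix an edge $(u,v)$ and consider the scenario that keeps every other coordinate at its nominal value and pushes only $f^l_{uv}$ upward. This scenario lies in $\mathcal{U}(\Gamma)$ whenever the deviation $f^l_{uv}-\hat{f}^l_{uv}$ is at most $\Gamma$ and respects the interval bound $f^l_{uv}\leq \bar{f}^l_{uv}$. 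Therefore
\[
\max_{\xi\in\mathcal{U}(\Gamma)} f^{l,\xi}_{uv} \;=\; \hat{f}^l_{uv} + \min\{\Gamma,\,\bar{f}^l_{uv}-\hat{f}^l_{uv}\} \;=\; \min\{\hat{f}^l_{uv}+\Gamma,\,\bar{f}^l_{uv}\}.
\]
Symmetrically, concentrating the entire budget on a single downward deviation of $f^u_{uv}$ yields
\[
\min_{\xi\in\mathcal{U}(\Gamma)} f^{u,\xi}_{uv} \;=\; \hat{f}^u_{uv} - \min\{\Gamma,\,\hat{f}^u_{uv}-\ubar{f}^u_{uv}\} \;=\; \max\{\hat{f}^u_{uv}-\Gamma,\,\ubar{f}^u_{uv}\}.
\]
Putting these per-edge per-direction tightest values together gives exactly the singleton $(f^l,f^u)$ announced in the statement.

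The main conceptual point — and the one I would highlight, echoing the Remark after \Cref{lem:one_worst_case} — is that this aggregation is legitimate even though no single scenario in $\mathcal{U}(\Gamma)$ simultaneously attains the maximizers and minimizers over all edges at once. Because strict robustness is evaluated constraint-wise, we are free to pick a different worst-case scenario for each individual constraint, and the resulting reduced uncertainty set $\{(f^l,f^u)\}$ need not be a member of $\mathcal{U}(\Gamma)$. I expect this to be the only step a careful reader might stumble on; the rest is a routine optimization over a one-dimensional interval intersected with a budget-ball, which is why I would not expand those calculations further.
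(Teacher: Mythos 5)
Your proposal is correct and follows essentially the same route as the paper: a constraint-wise reduction via Lemma \ref{lem:one_worst_case}, with the per-edge worst case obtained by concentrating the entire budget $\Gamma$ on a single coordinate, and the observation that the resulting aggregate scenario need not lie in $\mathcal{U}(\Gamma)$. You merely spell out the one-dimensional maximization/minimization that the paper's proof leaves implicit.
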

\begin{proof}
    We use the proof of \Cref{lem:one_worst_case} again, but this time we have to take the limited budget $\Gamma$ into account.
    Since the robust solution has to be feasible for any realization, a scenario that covers all worst-case scenarios is $(f^l_{uv},f^u_{uv}) = (\min \{ \hat{f}^l_{uv} + \Gamma, \bar{f}^l_{uv} \},\max \{ \hat{f}^u_{uv} - \Gamma, \ubar{f}^u_{uv} \})$ for all $(u,v) \in E$.
    Note that this particular scenario is not necessarily part of $\mathcal{U}(\Gamma)$.
\end{proof}


\begin{remark}
Other alternative geometries for the uncertainty set $\mathcal{U}$ 
can be considered in this setting. Following Lemma \ref{lem:one_worst_case}, the tractability of the strict robust versions of MFDP and MIFDP hinges on being able to identify the worst-case realization $\tilde{f}$ efficiently. 
\end{remark}

Since we have shown in \Cref{lem:one_worst_case} that any discrete uncertainty set of the form \eqref{eq:Uadj} can be reduced to a single worst-case scenario, the results from Section \ref{sec:model} also apply to the strictly robust counterpart of MWIFDP.

\begin{corollary}\label{lemma:2}
    For $a_y = 0 $ and $a_w > 0$, the strictly robust counterpart of MWIFDP with uncertainty sets $\cUint$ 
    and $\mathcal{U}(\Gamma)$ can be solved in polynomial time.
\end{corollary}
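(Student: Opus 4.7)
The plan is to chain the reduction lemmas already established with the tractability result for the nominal single-scenario problem. By \Cref{lem:one_worst_case} (and its refinements in \Cref{lemma:robustU} and \Cref{lemma:robustUG}), the strictly robust counterpart over each of the two uncertainty sets is equivalent to the nominal MWIFDP on the corresponding single worst-case scenario $\tilde{f}=(f^l,f^u)$. Thus the proof essentially consists of verifying that (i) $\tilde{f}$ can be computed cheaply, and (ii) the resulting nominal instance is solvable in polynomial time under $a_y=0$, $a_w>0$.

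First, I would invoke \Cref{lemma:robustU} to collapse $\cUint$ to the worst-case $f^l_{uv}=\bar{f}^l_{uv}$ and $f^u_{uv}=\ubar{f}^u_{uv}$, and \Cref{lemma:robustUG} to collapse $\mathcal{U}(\Gamma)$ to $f^l_{uv}=\min\{\hat{f}^l_{uv}+\Gamma,\bar{f}^l_{uv}\}$ and $f^u_{uv}=\max\{\hat{f}^u_{uv}-\Gamma,\ubar{f}^u_{uv}\}$. Both reductions are described edgewise, so the explicit worst-case vector $\tilde{f}$ can be assembled with $O(|E|)$ elementary arithmetic operations from the problem data. Under the standing assumption in this section that a strictly robust solution exists, $\tilde{f}$ satisfies $f^l_{uv}\le f^u_{uv}$ componentwise and is therefore a valid input for MWIFDP.

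Second, I would apply \Cref{lemma:1} to the resulting nominal MWIFDP instance with $a_y=0$ and $a_w>0$. That lemma shows how to construct an auxiliary capacitated network $G'(V',E',d,c,\ubar{f},\bar{f})$ from $(f^l,f^u)$ and solve the problem as a minimum-cost flow instance using a strongly polynomial MCFP algorithm, e.g., that of \cite{orlin1988faster}. Composing the $O(|E|)$ preprocessing with the polynomial-time MCFP call yields an overall polynomial-time procedure for the strictly robust counterpart, for both $\cUint$ and $\mathcal{U}(\Gamma)$.

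I do not expect a real obstacle: the corollary is a straightforward composition of results already proved in Sections \ref{sec:model} and \ref{sec:robustMFD}. The only delicate point worth mentioning explicitly is that the $\Gamma$-worst-case scenario from \Cref{lemma:robustUG} need not lie in $\mathcal{U}(\Gamma)$, but this is immaterial since what matters for tractability is solely that the strictly robust feasible region of \eqref{eq:rcl}--\eqref{eq:rclu} coincides with the feasible region of the nominal MWIFDP at $\tilde{f}$, which is precisely what \Cref{lemma:robustU} and \Cref{lemma:robustUG} assert.
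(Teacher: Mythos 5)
Your proposal is correct and follows exactly the paper's own argument: reduce each uncertainty set to its single worst-case scenario via \Cref{lemma:robustU} and \Cref{lemma:robustUG}, then solve the resulting nominal instance with the minimum-cost-flow construction of \Cref{lemma:1}. The added remarks on the $O(|E|)$ cost of assembling $\tilde{f}$ and on $\tilde{f}$ possibly lying outside $\mathcal{U}(\Gamma)$ are accurate but do not change the route.
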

\begin{proof}
Using the same reasoning as before, we can reduce the uncertain optimization problem of MWIFDP to solving MWIFDP for the respective worst-case scenario for $\cUint$ 
and $\mathcal{U}(\Gamma)$.
Consequently, we use the procedure in Lemma \ref{lemma:1} with 
$(\ubar{f}_{uv},\bar{f}_{uv}) = (\bar{f}^l_{uv},\ubar{f}^u_{uv})$ and $(\ubar{f}_{uv},\bar{f}_{uv}) = (\min \{ \hat{f}^l_{uv} + \Gamma, \bar{f}^l_{uv} \},\max \{ \hat{f}^u_{uv} - \Gamma, \ubar{f}^u_{uv} \})$ for all $(u,v) \in E$ regarding the uncertainty sets $\cUint$ 
and $\mathcal{U}(\Gamma)$, respectively.
\end{proof}

If we neglect the upper bounds on the flow, the uncertainty sets $\cUint$ 
and $\mathcal{U}(\Gamma)$ simplify further.
Now, let $\cUint$ 
be an interval-wise uncertainty set with
\begin{equation}\label{eq:U}
     \cUint= \left\{ f^l \in \mathbb{N}_{\geq0}^{|E|} : f^l_{uv} \in [\ubar{f}^l_{uv},\bar{f}^l_{uv}], \text{ } \forall\; (u,v)\in E \right\},
\end{equation}
for given $\ubar{f}^l,\bar{f}^l \in \mathbb{N}_{\geq0}^{|E|}$ and $\mathcal{U}(\Gamma)$ the corresponding $\Gamma$-uncertainty set with
\begin{equation}\label{eq:UGamma}
    \mathcal{U}(\Gamma) = \left\{  f^l \in \mathbb{N}_{\geq0}^{|E|} : f^l_{uv} \in [\ubar{f}^l_{uv},\bar{f}^l_{uv}] , \text{ } \forall\; (u,v)\in E ; \text{ } \sum_{(u,v)\in E} |f^l_{uv} - \ubar{f}^l_{uv}| \leq \Gamma  \right\} .
\end{equation}
for a given uncertainty budget $\Gamma \in \mathbb{R}_{\geq0}$ and nominal case $\hat{f} = \ubar{f}^l$.
We can assume w.l.o.g. that $\Gamma \in \mathbb{N}_{\geq0}$ since $f^l \in \mathbb{N}_{\geq0}^{|E|}$.

\begin{remark}
Notice that the uncertainty sets in \eqref{eq:U} and \eqref{eq:UGamma} are discrete sets.
\end{remark}

Now, we can adopt the reasoning in Lemma \ref{lemma:robustU} and Lemma \ref{lemma:robustUG} for the lower bounds.
Consequently, a worst-case scenario in the strictly robust case is $\bar{f}^l_{uv}$ and $\min \{ \hat{f}^l_{uv} + \Gamma, \bar{f}^l_{uv} \}$ for all $(u,v) \in E $ regarding $\cUint$ 
and $\mathcal{U}(\Gamma)$, respectively. Moreover, one can notice that Corollary \ref{lemma:2} also holds for this modified version of MWIFDP without upper bounds.

\begin{corollary}
    For $a_y = 0 $ and $a_w > 0$, the strictly robust counterpart of MWIFDP without upper bounds and with uncertainty sets $\cUint$ 
    in \eqref{eq:U} and $\mathcal{U}(\Gamma)$ in \eqref{eq:UGamma} can be solved in polynomial time.
\end{corollary}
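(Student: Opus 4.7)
The plan is to combine the worst-case reduction that underlies \Cref{lem:one_worst_case} (now specialized to the lower-bounds-only setting) with the polynomial-time MCFP reduction of \Cref{lemma:3}. Since only constraint \eqref{eq:MWIFD4} is affected by uncertainty once we drop \eqref{eq:MWIFD5}, the strictly robust counterpart reduces to enforcing, for every $(u,v) \in E$, the single inequality
\[
\sum_{i \in \{1,\dots,\overline{K}\}} w_i x_{uvi} \;\geq\; \max_{f^l \in \cUint}\, f^l_{uv},
\]
and analogously for $\mathcal{U}(\Gamma)$. Hence the entire uncertainty set collapses, constraint-wise, to a singleton worst-case instance.

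First, I would argue the reduction for $\cUint$ in \eqref{eq:U}: the edgewise maximum $\max_{f^l \in \cUint} f^l_{uv} = \bar{f}^l_{uv}$ is attained because each edge component can be maximized independently within its interval. Thus the strictly robust counterpart is equivalent to the nominal MWIFDP without upper bounds on the instance $G(V,E,\bar{f}^l)$, which by \Cref{lemma:3} is solvable in polynomial time.

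Second, I would repeat this for $\mathcal{U}(\Gamma)$ in \eqref{eq:UGamma}. Here the components are coupled through the budget constraint, but the worst-case value on each individual edge is obtained by concentrating the budget $\Gamma$ on that edge, yielding $\max_{f^l \in \mathcal{U}(\Gamma)} f^l_{uv} = \min\{\hat{f}^l_{uv} + \Gamma,\bar{f}^l_{uv}\}$, exactly as in the argument preceding the corollary. Because the robust counterpart must hold for every realization simultaneously, we may safely aggregate these edgewise worst cases into a single auxiliary instance with $f^l_{uv} := \min\{\hat{f}^l_{uv} + \Gamma,\bar{f}^l_{uv}\}$; any solution feasible for this instance is feasible for all $f^l \in \mathcal{U}(\Gamma)$, and conversely a strictly robust solution must satisfy each of these edgewise tight inequalities. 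As in the $\cUint$ case, \Cref{lemma:3} then finishes the argument.

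The only subtlety, and the closest thing to an obstacle, is to make clear that aggregating edgewise worst-case bounds into a single instance does not lose generality in the $\Gamma$-case even though no single $f^l \in \mathcal{U}(\Gamma)$ may attain all of them simultaneously; this is precisely the constraint-wise nature of static robustness highlighted in the remark following \Cref{lem:one_worst_case}, and I would invoke that observation rather than re-deriving it. Once this is acknowledged, both statements follow directly by reducing to \Cref{lemma:3}.
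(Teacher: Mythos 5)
Your proposal is correct and follows essentially the same route as the paper: reduce each uncertainty set constraint-wise to the single worst-case lower-bound instance ($\bar{f}^l_{uv}$ for $\cUint$ and $\min\{\hat{f}^l_{uv}+\Gamma,\bar{f}^l_{uv}\}$ for $\mathcal{U}(\Gamma)$) and then invoke the minimum-cost-flow reduction of Lemma~\ref{lemma:3} with upper bounds set to $\infty$. Your explicit acknowledgment that the aggregated worst case need not lie in $\mathcal{U}(\Gamma)$ matches the remark the paper makes after Lemma~\ref{lem:one_worst_case}.
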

\begin{proof}
We follow the proof of Lemma \ref{lemma:3}.
That means we have $(\ubar{f}_{uv},\bar{f}_{uv}) = (\bar{f}^l_{uv},\infty)$ concerning uncertainty set $\cUint$ 
and $(\ubar{f}_{uv},\bar{f}_{uv}) = (\min \{ \hat{f}^l_{uv} + \Gamma, \bar{f}^l_{uv} \},\infty)$ concerning $\mathcal{U}(\Gamma)$.
\end{proof}

\begin{corollary}
    For $a_y > 0$ and $a_w = 0$, the strictly robust counterpart of MWIFDP without upper bounds and with uncertainty sets $\cUint$ 
    in \eqref{eq:U} and $\mathcal{U}(\Gamma)$ in \eqref{eq:UGamma} can be solved in polynomial time.
\end{corollary}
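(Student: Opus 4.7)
The plan is to follow exactly the same pattern as Corollary \ref{lemma:2}, but replacing the appeal to Lemma \ref{lemma:3} by an appeal to Lemma \ref{lemma:4}, since we now handle the weight combination $a_y>0$, $a_w=0$ rather than the previous $a_y=0$, $a_w>0$. Conceptually the strictly robust counterpart involves only uncertain lower-bound constraints of the form \eqref{eq:MWIFD4}; each such constraint is of ``$\geq$'' type, so it can be handled independently by replacing the right-hand side by the largest value it can take on over the uncertainty set. This is precisely the constraint-wise argument of Lemma \ref{lem:one_worst_case}, specialized to the setting without upper bounds.

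First I would identify, in each of the two uncertainty sets, the single deterministic worst-case realization. For $\cUint$ in \eqref{eq:U} this is $f^l_{uv}=\bar{f}^l_{uv}$ for every $(u,v)\in E$, exactly as in Lemma \ref{lemma:robustU}. For $\mathcal{U}(\Gamma)$ in \eqref{eq:UGamma} this is $f^l_{uv}=\min\{\hat{f}^l_{uv}+\Gamma,\bar{f}^l_{uv}\}$ for every $(u,v)\in E$, as in Lemma \ref{lemma:robustUG}; this scenario need not belong to $\mathcal{U}(\Gamma)$ itself, but it dominates every actual scenario constraint-wise, so any solution satisfying it is strictly robust. Both of these worst-case vectors can clearly be computed from the data in linear time.

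Second I would invoke Lemma \ref{lemma:4} on the resulting deterministic instance. Since $a_w=0$, we are free to set all $w_i$ equal to the common value $\sum_{(u,v)\in E}f^l_{uv}$, reducing the problem to covering every edge with positive lower bound by a minimum number of $s$-$t$-paths. This is the minimum-cost flow instance on the augmented graph $G'(V',E',d,c,\ubar{f},\bar{f})$ used in the proof of Lemma \ref{lemma:4}, with $(\ubar{f}_{uv},\bar{f}_{uv})=(1,\infty)$ on edges whose (worst-case) lower bound is positive, $(\ubar{f}_{uv},\bar{f}_{uv})=(0,\infty)$ on the remaining original edges, and $(0,\infty)$ on the auxiliary edges $(s',s)$, $(t,t')$, $(s',t')$; MCFP is polynomial-time solvable \cite{orlin1988faster,wayne1999polynomial,zhu2011minimal}. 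Composing the (constant-time) worst-case reduction with this polynomial-time subroutine yields the claim for both uncertainty sets.

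There is no real obstacle in this proof: both the worst-case reduction and the polynomial-time deterministic algorithm are already established, and they compose cleanly because the strict-robust reformulation of the MWIFDP without upper bounds is just a deterministic MWIFDP without upper bounds on a modified right-hand side. The only minor check I would spell out is that the constraint-wise maximization used in Lemma \ref{lem:one_worst_case} (and repeated in Lemma \ref{lemma:robustU} and Lemma \ref{lemma:robustUG}) continues to apply when the uncertain constraints \eqref{eq:MWIFD5} are dropped, which is immediate since removing constraints only weakens the feasibility requirement.
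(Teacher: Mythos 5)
Your proposal is correct and follows essentially the same route as the paper: reduce each uncertainty set to its single worst-case lower-bound vector (as in Lemma \ref{lem:one_worst_case}, Lemma \ref{lemma:robustU}, and Lemma \ref{lemma:robustUG}) and then apply the polynomial-time construction of Lemma \ref{lemma:4} to the resulting deterministic instance. Your explicit remark that the constraint-wise worst-case argument survives the removal of the upper-bound constraints is a small but welcome addition of rigor over the paper's terser proof.
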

\begin{proof}
With the result from \Cref{lem:one_worst_case}, the uncertainty set $\cUint$ 
can be reduced to $\{ (f^l,f^u) \} $ with $f^l_{uv} =\bar{f}^l_{uv}$ and $ f^u_{uv} =\ubar{f}^u_{uv} $ for all $ (u,v) \in E $.
The same applies to the uncertainty set $\mathcal{U}(\Gamma)$, which can be reduced to $\{ (f^l,f^u) \} $ with $ f^l_{uv} = \min \{ \hat{f}^l_{uv} + \Gamma, \bar{f}^l_{uv} \} $ and $ f^u_{uv} = \max \{ \hat{f}^u_{uv} - \Gamma, \ubar{f}^u_{uv} \} $ for all $ (u,v) \in E $. Subsequently, we can follow the proof of Lemma \ref{lemma:4}. 
\end{proof}


\subsection{Adjustable robust flow decomposition}\label{sec:adj}

As we have seen in the previous section, even if we know that the uncertainty set is restricted and only a few edges in each scenario have a positive flow, we still need to set weights that cover all edges that have a positive flow in any scenario.
As a result, the number of paths (and the sum of weights) in the solution is much larger than what is actually required for any realized scenario.
Moreover, the uncertainty set must allow for a solution that is feasible for all scenarios.
To overcome the rigidity of the solutions obtained by the strictly robust model, we make use of adjustable robustness, introduced in Section \ref{subsec:robustness}, to allow part of the decisions to be made after the uncertainty has been observed.

Depending on the application at hand, it may be essential to know the number of paths a priori or to specify a superset of paths from which to choose.
In the context of transcript assembly, detaching the number of paths $y$ and each path($x$) from the weights ($w$) allows for accommodating errors in the measurement of the abundances, which in turn allows for more robust characterization of sequences despite the uncertainty on the actual splice graph.

Similarly, considering the field of transportation, we can have a situation where we need to know beforehand how many vehicles (corresponding to the number of paths $y$) will be required but can decide on capacities ($w$) and routes ($x$) later, as well as a situation where both the routes ($x$) and the number of drivers ($y$) must be known in advance, but the vehicles with suitable capacities ($w$) can be selected depending on the final realization.

As we can make three different types of decisions (number of paths, paths, weights) corresponding to the three different variable types ($y,x,w$), this naturally results in two adjustable robust optimization versions of MWIFDP based on \eqref{eq:MWIFD}.
In the first formulation, we have one here-and-now and two wait-and-see decisions, i.e., we first make a decision on the number of paths $y$ and then decide on the paths $x$ and weights $w$ depending on the observed scenario $\xi$.
Since we only have one set of non-adjustable variables, we call it the more adjustable formulation \MA.
Consequently, in the second less adjustable formulation, denoted by \LA, we first make decisions on the routes ($x$) and how many of them ($y$), and then we decide on the weights ($w$) depending on the scenario $\xi$.
An overview of both adjustable formulations can be found in Table \ref{tab:overviewadj}.

\begin{table}[h]
\centering
\resizebox{\textwidth}{!}{
 \begin{tabular}{p{0.16\textwidth}p{0.4\textwidth}p{0.4\textwidth}} 
 \toprule
 &  \MA{} (more adjustable formulation)  &  \LA{} (less adjustable formulation)  \\
 \midrule
               a priori & number of paths & number of paths, paths  \\
               a posteriori & paths, weights & weights \\
               non-adjustable& $y$ & $y$, $x$   \\
               adjustable & $x$, $w$ & $w$  \\
               \midrule
                solution method & Algorithm~\ref{alg:1V} & Algorithm~\ref{alg:2V} \\
                models & MIP formulation \eqref{eq:1V}, \newline master problem \eqref{eq:1VM}, \newline sub-problem \eqref{eq:1VS} & MIP formulation \eqref{eq:2V}, \newline master problem \eqref{eq:2VM}, \newline sub-problem \eqref{eq:2VS} \\
 \midrule
               features & $\bullet$ results in a lower number of paths  
               \newline $\bullet$ depending on the scenario, different paths might be used
                        & $\bullet$ results in a smaller pool of paths covering all scenarios \\
 \bottomrule
 \end{tabular}
 }
\caption{Overview of both adjustable formulations.}
\label{tab:overviewadj}
\end{table}



For \MA, the resulting MIP formulation is stated in \eqref{eq:1V}.
The adjustable variables $x^{\xi}$ and $w^{\xi}$ now depend on the scenario $\xi \in \mathcal{U}$ (given by the bounds $f^{l,\xi}$ and $f^{u,\xi}$) and, as a consequence, the sum of weights are moved to the constraints \eqref{eq:1V9} and substituted by variable $\mathcal{W}$ in the objective function.
\begin{mini!}
{x,y,w}{\sum_{i=1}^{\overline{K}} y_i + \mathcal{W} \label{eq:1V0}}
{\label{eq:1V}}{}
\addConstraint{\sum_{i=1}^{\overline{K}} w^{\xi}_i }{\leq \mathcal{W}, \label{eq:1V9}}{ \forall\; \xi \in \mathcal{U}}
\addConstraint{\sum_{v:(s,v) \in E} x^{\xi}_{svi}}{=y_i, \label{eq:1V1}}{ \forall\; i \in \{1,...,\overline{K}\}, \xi \in \mathcal{U}}
\addConstraint{\sum_{u:(u,t) \in E} x^{\xi}_{uti}}{=y_i, \label{eq:1V2}}{ \forall\; i \in \{1,...,\overline{K}\}, \xi \in \mathcal{U}}
\addConstraint{\sum_{(u,v) \in E} x^{\xi}_{uvi} - \sum_{(v,w) \in E} x^{\xi}_{vwi}}{=0, \label{eq:1V3}}{ \forall\; i \in \{1,...,\overline{K}\}, v \in V \setminus \{s,t\}, \xi \in \mathcal{U}}
\addConstraint{\sum_{i \in \{1,...,\overline{K}\}} w^{\xi}_i x^{\xi}_{uvi}}{\geq f^{l,\xi}_{uv}, \label{eq:1V4}}{\forall\; (u,v) \in E, \xi \in \mathcal{U}}
\addConstraint{\sum_{i \in \{1,...,\overline{K}\}} w^{\xi}_i x^{\xi}_{uvi}}{\leq f^{u,\xi}_{uv}, \label{eq:1V5}}{\forall\; (u,v) \in E, \xi \in \mathcal{U}}
\addConstraint{\mathcal{W}}{\in \mathbb{Z}^+  \label{eq:1V10}}
\addConstraint{w^{\xi}_i}{\in \mathbb{Z}^+,  \label{eq:1V6}}{ \forall\; i \in \{1,...,\overline{K}\}, \xi \in \mathcal{U}}
\addConstraint{x^{\xi}_{uvi}}{\in \{0,1\}, \quad \label{eq:1V7}}{ \forall\; (u,v) \in E,  i \in \{1,...,\overline{K}\}, \xi \in \mathcal{U}}
\addConstraint{y_i}{\in \{0,1\}, \label{eq:1V8}}{\forall\; i \in \{1,...,\overline{K}\}.}
\end{mini!}    

The formulation for \LA{} is similar, with the key difference being that only the variables $w^{\xi}$ are adjustable. The complete \LA{} formulation can be found in \eqref{eq:2V} in Appendix \ref{app:adj2}.

\begin{lemma}\label{lemma:}
    The objective value of the optimal solution to \MA{} in \eqref{eq:1V} is less than or equal to the objective value of the optimal solution to \LA{} in \eqref{eq:2V}.
\end{lemma}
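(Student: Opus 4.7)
The plan is to exhibit a natural feasibility-preserving map from optimal solutions of \LA{} into the feasible region of \MA{} that keeps the objective value unchanged; the desired inequality then follows from optimality of the \MA{} solution.

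Concretely, let $(y^*, x^*, (w^{\xi,*})_{\xi \in \mathcal{U}}, \mathcal{W}^*)$ be an optimal solution of \LA{} in \eqref{eq:2V}, where $y^*$ and $x^*$ are non-adjustable (scenario-independent) while $w^{\xi,*}$ and the epigraph variable $\mathcal{W}^*$ play the same role as in \MA. I would construct a candidate solution $(\tilde y, \tilde x, \tilde w, \tilde{\mathcal{W}})$ for \MA{} by setting $\tilde y := y^*$, $\tilde{\mathcal{W}} := \mathcal{W}^*$, $\tilde w^{\xi} := w^{\xi,*}$, and, crucially, $\tilde x^{\xi}_{uvi} := x^*_{uvi}$ for every scenario $\xi \in \mathcal{U}$. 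In other words, I would lift the single, scenario-independent path choice of \LA{} into the scenario-indexed $x$-variables of \MA{} by duplication across $\xi$.

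I would then check feasibility of this lift constraint-by-constraint. Constraints \eqref{eq:1V1}--\eqref{eq:1V3} for each $\xi$ reduce to the corresponding $\xi$-free source/sink/conservation constraints of \LA{}, which hold by assumption. Constraints \eqref{eq:1V4}--\eqref{eq:1V5} for a given $\xi$ become exactly the lower/upper bound constraints of \LA{} with edge bounds $f^{l,\xi}$, $f^{u,\xi}$ and adjustable weights $w^{\xi,*}$, which again hold by feasibility of the \LA{} solution. Constraint \eqref{eq:1V9}, which replaces the sum of weights in the objective, follows because $\mathcal{W}^*$ already upper-bounds $\sum_i w^{\xi,*}_i$ for every $\xi$ in \LA. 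Integrality and binarity are preserved trivially by the copy. Finally, the \MA{} objective value at $(\tilde y, \tilde x, \tilde w, \tilde{\mathcal{W}})$ equals $\sum_i \tilde y_i + \tilde{\mathcal{W}} = \sum_i y^*_i + \mathcal{W}^*$, which is precisely the \LA{} optimum. Hence the \MA{} optimum is at most this value.

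I do not foresee a significant obstacle: the argument is a standard ``more wait-and-see variables cannot hurt'' observation, since \LA{} can be embedded into \MA{} by requiring $x^{\xi}$ to be constant in $\xi$, a restriction \MA{} is free to drop. The only point that deserves explicit verification is that the epigraph reformulation of the weight sum, introduced via $\mathcal{W}$ and constraint \eqref{eq:1V9}, carries over identically in both models, so copying $\mathcal{W}^*$ is sound; this is precisely what the argument above confirms.
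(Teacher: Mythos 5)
Your proposal is correct and follows essentially the same argument as the paper: both lift the scenario-independent path variables $x$ of \LA{} into the scenario-indexed $x^{\xi}$ of \MA{} by duplication, keep $y$, $w^{\xi}$, and $\mathcal{W}$ unchanged, and observe that feasibility and the objective value are preserved. The only cosmetic difference is that the paper phrases the embedding for an arbitrary feasible \LA{} solution rather than just the optimal one, which is an immaterial distinction.
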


\begin{proof}
    We show that all feasible solutions to \LA{} in \eqref{eq:2V} are also feasible to \MA{} in \eqref{eq:1V} and have the same objective value.
    Let $ (\Ddot{y},\Ddot{x},\Ddot{w})$ be a feasible solution to \LA{} with
    \begin{equation*}
        (\Ddot{y},\Ddot{x},\Ddot{w}) = (\Ddot{y}_i,\Ddot{x}_{uvi},\Ddot{w}^{\xi}_i)_{(u,v) \in E,  i \in \{1,...,\overline{K}\}, \xi \in \mathcal{U}}.
    \end{equation*}
    Since both formulations \eqref{eq:2V} and \eqref{eq:1V} differ only in the variable $x$ (\eqref{eq:2V7} and \eqref{eq:1V7}), we construct a solution $ (\dot{y},\dot{x},\dot{w})$ with
    \begin{equation*}
        (\dot{y},\dot{x},\dot{w}) = (\dot{y}_i,\dot{x}^{\xi}_{uvi},\dot{w}^{\xi}_i)_{(u,v) \in E,  i \in \{1,...,\overline{K}\}, \xi \in \mathcal{U}}
    \end{equation*}
    to \MA{} as follows.
    We set
        \begin{align*}
            && \dot{y}_i &:= \Ddot{y}_i, && \forall\; i \in \{1,...,\overline{K}\}, \\
            && \dot{w}^{\xi}_i &:= \Ddot{w}^{\xi}_i, && \forall\; i \in \{1,...,\overline{K}\}, \xi \in \mathcal{U}, \\
            && \dot{x}^{\xi}_{uvi} &:= \Ddot{x}_{uvi}, && \forall\; (u,v) \in E,  i \in \{1,...,\overline{K}\}, \xi \in \mathcal{U},
        \end{align*}
        whereby the latter is feasible because $\Ddot{x}$ satisfies constraints \eqref{eq:2V1}-\eqref{eq:2V5} for all scenarios $\xi \in \mathcal{U}$, and thus, $\dot{x}$ also fulfills constraints \eqref{eq:1V1}-\eqref{eq:1V5}.
        It is easy to see that the objective value has not changed, which completes the proof. Notice that this does not apply vice versa, as here the variables $x$ depend on scenario $\xi \in \mathcal{U}$ and are, therefore, generally not feasible for all scenarios.
\end{proof}

\begin{remark}
Naturally, \eqref{eq:1V} and \eqref{eq:2V} reduce to the MWIFDP formulation in \eqref{eq:MWIFD} if we have a single scenario, but unlike 
for strict robustness,
we need to consider more than one scenario in the general case.
Moreover, in contrast to strict robustness, an edge $e \in E$ for which the intersection of the intervals $ [f^{l,\xi}_e, f^{u,\xi}_e] $ of all scenarios $\xi$ is empty no longer automatically implies infeasibility.
\end{remark}


The MIP formulations \eqref{eq:1V} and \eqref{eq:2V} yield challenging problems with steep, impractical computational requirements. Therefore, to solve the proposed adjustable variants \LA{} and \MA, we adapt the \emph{column-and-constraint generation} approach in \cite{zeng2013solving} to find a minimal subset $\overline{\mathcal{U}}$ of $\mathcal{U}$ that is sufficient to \emph{cover} the adjustable problem for the complete uncertainty set $\mathcal{U}$.
Here, we say that $\overline{\mathcal{U}} \subset \mathcal{U} $ \emph{covers} $\mathcal{U}$ if solving the problem for $\overline{\mathcal{U}}$ returns the same solution as solving the problem for $\mathcal{U}$.
The adaptation relates to the fact that, while in \cite{zeng2013solving} the uncertainty set $\mathcal{U}$ is assumed to be continuous (with a finite number of extreme points and rays), our uncertainty set is discrete and finite.
Nevertheless, the convergence result in \cite{zeng2013solving} still guarantees that the proposed column-and-constraint generation algorithm will converge and return an optimal solution for a set $\overline{\mathcal{U}}$ that covers $\mathcal{U}$.

Therefore, we start by solving formulation \eqref{eq:1V} (or \eqref{eq:2V}) with subset $\overline{\mathcal{U}}$ that includes initially one arbitrary scenario.
This is the so-called \emph{master problem}.
We use the resulting solution of the non-adjustable variables to fix them in the \emph{sub-problem}, where we then solve the formulation for all scenarios $\xi \in \mathcal{U}$.
The scenario $\xi^*$ with the highest solution value or that causes infeasibility is added to $\overline{\mathcal{U}}$ and the steps are repeated until the solution value of the master problem equals the solution value of the sub-problem (in particular, it is feasible for all scenarios).
For \MA, a detailed description of the column-and-constraint generation method can be found in Algorithm \ref{alg:1V} and the corresponding formulations of the master problem and sub-problem in \eqref{eq:1VM} and \eqref{eq:1VS} in Appendix \ref{app:adj1}.
If a given solution $(y^{k})$ for the master problem leads to an infeasible sub-problem, we know that we need at least one more path.
As a consequence, in line 12 of Algorithm \ref{alg:1V}, we add the corresponding constraint to the master problem \eqref{eq:1VM}.
If this is the case in \LA, we only know that the combination of $(y^{k})$ and $(x^{k})$ causes infeasibility of the sub-problem.
Therefore, we add the constraint 
\begin{equation*}
    \sum_{i: (y^{k})_i=0} y_i +\sum_{i: (x^{k})_i=0} x_i + \sum_{i: (y^{k})_i=1} (1-y_i )+ \sum_{i: (x^{k})_i=1} (1-x_i ) \geq 1
\end{equation*}
to the master problem \eqref{eq:2VM} to eliminate this combination.
This can be found in Appendix \ref{app:adj2} in Algorithm \ref{alg:2V} with the corresponding formulations of the master problem and sub-problem of \LA{} in \eqref{eq:2VM} and \eqref{eq:2VS}.

\begin{algorithm}[h]
\caption{Column-and-constraint generation method for \MA}\label{alg:1V}
\begin{algorithmic}[1]
\State \textbf{given} a scenario set $\mathcal{U}$ and threshold $\epsilon$\;
\State set $LB = - \infty$, $UB = \infty$, $k=1$ and initialize $\overline{\mathcal{U}}$ with one scenario from $\mathcal{U}$\;
\While{$UB-LB > \epsilon $}
    \State solve master problem \eqref{eq:1VM}, get optimal solution $(y^{k},x^{k},w^{k},\mathcal{W}^{k})$\;
    \State update lower bound $LB =   \sum_{i=1}^{\overline{K}} (y^{k})_i + \mathcal{W}^{k} $\;
    \State solve sub-problem \eqref{eq:1VS} with input $(y^{k})$\;
    \If{we obtain an optimal solution $(\bar{x}^{k},\bar{w}^{k},\bar{\mathcal{W}}^{k})$}
        \State let $\xi^k$ be the worst-case scenario causing $\bar{\mathcal{W}}^{k}$\;
        \State update upper bound $UB =   \sum_{i=1}^{\overline{K}} (y^{k})_i + \bar{\mathcal{W}}^{k} $\;
    \Else
        \State let $\xi^k$ be the first scenario causing infeasibility\;
        \State add $ \sum_{i=1}^{\overline{K}} y_i \geq \sum_{i=1}^{\overline{K}} (y^{k})_i +1 $ to constraints of master problem \eqref{eq:1VM}\;
    \EndIf  
    \State add $\xi^k$ to $\overline{\mathcal{U}}$\;
    \State $k=k+1$\;
\EndWhile    
\State\Return $(y^{k},\bar{x}^{k},\bar{w}^{k})$\;
\end{algorithmic} 
\end{algorithm}


Even though it may be necessary in theory to consider all scenarios in the master problem, a much smaller number is sufficient in most cases, as we see in the experimental evaluation in Section \ref{sec:study}.

To evaluate the aforementioned adjustable formulations, we add a third conservative method, the so-called \emph{naive approach}.
Here, we solve the problem for each scenario separately and then add all paths with corresponding weights to a set of paths.
If solutions for different scenarios use the same path, we add the path only once.
To calculate $\mathcal{W}$, we use the maximum of the summed weights of the scenarios.
Consequently, this is a heuristic solution for \LA, giving us an upper bound on \LA{} and, thus, also on \MA.

\section{Computational study}\label{sec:study}

In this section, we experimentally evaluate both adjustable robust optimization versions of the MWIFDP introduced in the previous Section \ref{sec:adj} to quantify the benefit of adaptability when taking into account the uncertainty.
Moreover, we show that both \MA{} and \LA{} considerably outperform the naive approach, in which we solve the problem for each scenario separately.
As described in the previous section, we use the column-and-constraint generation methods in Algorithm \ref{alg:1V} and Algorithm \ref{alg:2V} as solution methods. The code and data generated are available at www.github.com/gamma-opt/robust-flow-decomposition.

\subsection{Data and scenarios}

Due to this being the first work to consider the MWIFDP and additionally adjustable robustness, we could not find instances in the literature that we could use directly for our purposes.
Therefore, we adapted graph-structured instances from the literature that are related to our work both theoretically and practically.
As mentioned in Section \ref{sec:introduction}, the most recent references concerning MFD stem from the field of bioinformatics, providing many datasets for the application of RNA transcript assembly.
Here, we select two of the instances also used in \cite{dias2022efficient} and \cite{dias2023accurate}, both being acyclic splice graphs, one for a human gene and one for a mouse gene.
Additionally, we added a benchmark instance for a minimum-cost flow problem from \cite{veldhorst1993bibliography} denoted by \emph{gte} and a graph that contains regional railway data of Lower Saxony, a region in northern Germany.
The latter comes from the scientific software toolbox LinTim \cite{schiewe2023documentation}, which deals with solving various planning steps in public transport.
Since this is not originally an $s$-$t$-flow network, we modified it by adding $71$ auxiliary edges for all missing outgoing edges of $s$ and incoming edges of $t$.
To complement this, we also used the artificial graph from Example \ref{ex:mfd} in Example \ref{ex:mfd}, which was also used in \cite{dias2023accurate}.
A description of all instances can be found in Table \ref{tab:instances} and the corresponding plots in Figure \ref{fig:examplea} and Figure \ref{fig:graphs}.

\begin{table}[h]
\centering
\scalebox{1}{
 \begin{tabular}{llrrrr}
 \toprule
 id &   name &  \#nodes &  \#edges &  origin &  plot \\
 \midrule
               1 & small & 7 &  12 &   Example \ref{ex:mfd} & Figure \ref{fig:examplea}\\
               2 & lowersaxony & 36 & 103 &  \cite{schiewe2023documentation} & Figure \ref{fig:lowersaxony} \\
               3 & human & 24 &39 & \cite{dias2022efficient,dias2023accurate} & Figure \ref{fig:human}  \\
               4 & mouse &111 & 120 &  \cite{dias2022efficient,dias2023accurate} & Figure \ref{fig:mouse}  \\
               5 & gte &  49 & 130 & \cite{veldhorst1993bibliography} & Figure \ref{fig:gte}  \\
 \bottomrule
 \end{tabular}
 }
\caption{Graph properties of the original instances.}
\label{tab:instances}
\end{table}


\begin{figure}[h!]
	\begin{center}
        \begin{subfigure}{0.49\linewidth}
	   \resizebox{1\textwidth}{!}{
        \includegraphics[width=1\linewidth]{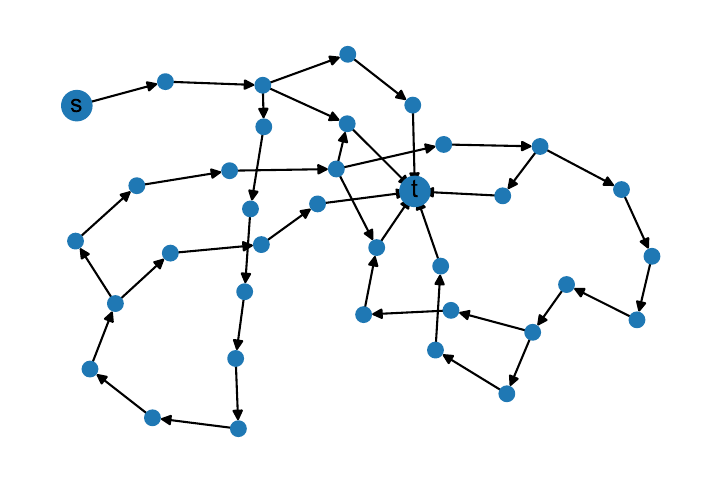}
        }
        \caption{lowersaxony}
        \label{fig:lowersaxony}
\end{subfigure}
\begin{subfigure}{0.49\linewidth}
	\resizebox{1\textwidth}{!}{
\includegraphics[width=1\linewidth]{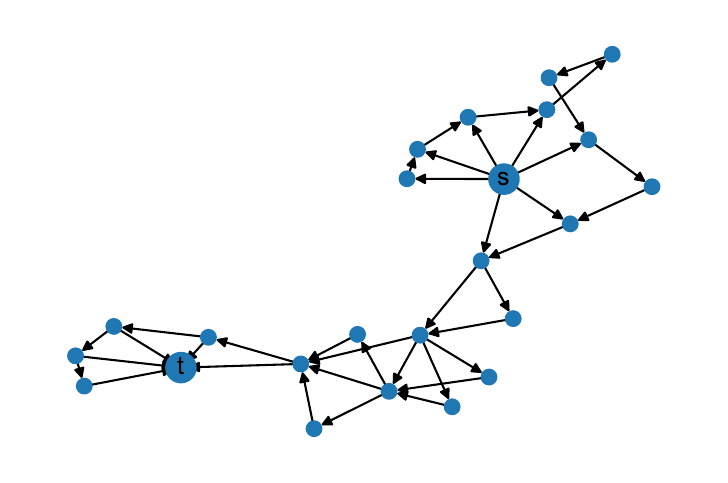}
        }
        \caption{human}
        \label{fig:human}
		\end{subfigure}
\begin{subfigure}{0.49\linewidth}
	\resizebox{1\textwidth}{!}{
\includegraphics[width=1\linewidth]{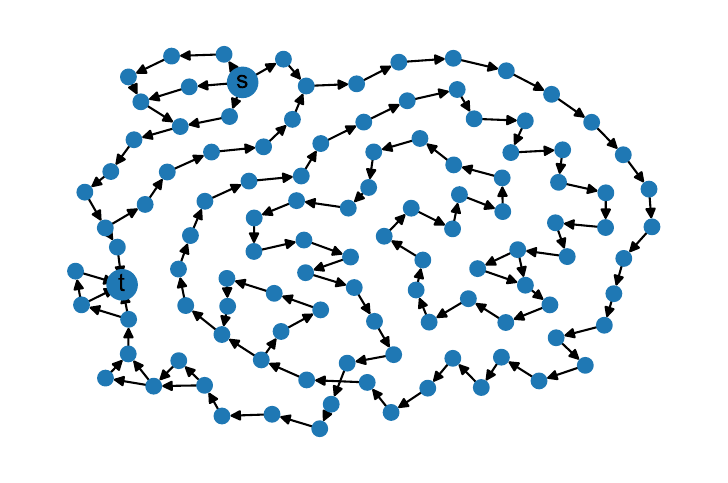}
        }
        \caption{mouse}
        \label{fig:mouse}
		\end{subfigure}
\begin{subfigure}{0.49\linewidth}
	\resizebox{1\textwidth}{!}{
\includegraphics[width=1\linewidth]{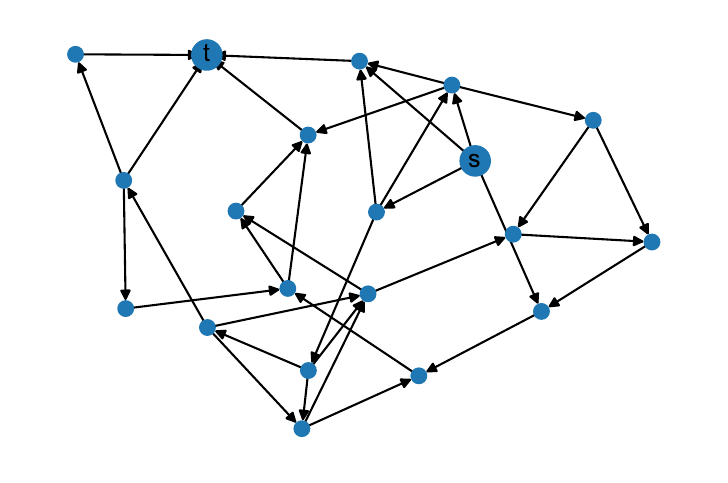}
        }
        \caption{gte}
        \label{fig:gte}
		\end{subfigure}
\end{center}
	\caption{Graph structures of the original instances.}
	\label{fig:graphs}
	\end{figure}

Deriving meaningful scenarios for the aforementioned instances is challenging. Besides the fact that each scenario must be feasible on its own, the difficulty lies in ensuring that the uncertainty set $\mathcal{U}(\Gamma)$ is sufficiently diverse to provide required protection levels whilst not being so disparate that they become unrealistic or impractical, rendering any robust consideration senseless.

To generate the scenarios that form the uncertainty set, we sample a random subset $\mathcal{P'}$ of $s$-$t$-paths of size $p\coloneqq|\mathcal{P'}|$ in the graph and set the flow value of each edge $e \in E$ to the number of occurrences of $e$ in these $s$-$t$-paths.
Consequently, $\mathcal{P'}$, with associated path weights all equal to $1$, forms a $|\mathcal{P'}|$-flow decomposition for the constructed flow network with weight $|\mathcal{P'}|$.
That means we guarantee feasibility and control the ratio of summed weights to be the number of paths since both are highly correlated and bounded by the subset size $|\mathcal{P'}|$.
We then define lower and upper bounds around these flow values for each edge $e \in E$ to construct one scenario.
Since we sample a new subset $\mathcal{P'}$ of $s$-$t$-paths for each scenario, this promotes variety in the scenarios forming the uncertainty set.

In order to measure and eliminate highly divergent scenarios from the uncertainty set, we first construct a nominal scenario ($f^*_{uv}, f^{l,*}_{uv},f^{u,*}_{uv}$, $(u,v)\in E$) using this method and then only include scenarios whose summed deviations lie within a certain range $\Gamma$ of the nominal scenario.
The detailed procedure is outlined in Algorithm \ref{alg:scen}.
Since in our computations, subset sizes $|\mathcal{P'}|$ of value $5, 10$, and $20$ produce very similar results, we use $|\mathcal{P'}| = 10$ in what follows.

\begin{algorithm}[h]  
\caption{Description of scenario generation}\label{alg:scen}
\begin{algorithmic}[1]
\State \textbf{given} digraph $G(V,E)$: 
\State compute all $s$-$t$-paths $\mathcal{P}(G)$ and set $\mathcal{U}(\Gamma) = \emptyset $, $\delta = \frac{p}{2}$\;
\State initialize $f_{uv}^* = 0 $ for all $ (u,v) \in E $
sample a random subset of paths $\mathcal{P'} \subset \mathcal{P}(G) $ with size $| \mathcal{P'} |=p$\;
\State update $f_{uv}^* = f_{uv}^* + 1 $ for all $ (u,v) \in P $ for each path $ P \in \mathcal{P'} $\;
\State set $f^{l,*}_{uv} = f_{uv}^* $ and $f^{u,*}_{uv} =f_{uv}^* + \delta $ for all $ (u,v) \in E $\;
$\xi = 1$
\While{$|\mathcal{U}(\Gamma)| <$ required number of scenarios}
    \State initialize $f_{uv} = 0 $ for all $ (u,v) \in E $\;
    \State sample a random subset of paths $\mathcal{P'} \subset \mathcal{P}(G) $ with size $| \mathcal{P'} |=p$\;
    \State update $f_{uv} = f_{uv} + 1 $ for all $ (u,v) \in P $ for each path $ P \in \mathcal{P'} $\;
    \State draw $f^{l,\xi}_{uv} \in [\max(0,f_{uv} - 2), f_{uv}] $ and set $f^{u,\xi}_{uv} =f_{uv} + \delta $ for all $ (u,v) \in E $\;
    \If{$ \sum_{e\in E} |f^{l,\xi}_{e} - f^{l,*}_{e}| + |f^{u,\xi}_{e} - f^{u,*}_{e}| \leq \Gamma' \cdot \sum_{e\in E} ( f^{l,*}_{e} + f^{u,*}_{e}) \eqqcolon \Gamma $}
        \State add scenario $(f^{l,\xi}, f^{u,\xi}) $ to uncertainty set $\mathcal{U}(\Gamma)$\;
    \EndIf    
    \State update $\xi = \xi + 1 $\;
\EndWhile

\State\Return $\mathcal{U}(\Gamma)$\;
\end{algorithmic}
\end{algorithm}

To define the range around the flow values for the lower and upper bounds
, we use $\delta = \frac{|\mathcal{P'}|}{2}$.
Note that $f_{uv}$ is bounded by $|\mathcal{P'}|$.
Due to the size of the graphs, $f_{uv}$ is closer to $0$ in most cases, which is why we draw the lower bound in the interval $[\max(0,f_{uv} - 2), f_{uv}]$ for all $ (u,v) \in E $.
Note that all auxiliary edges have a lower bound of zero, 
i.e., we set the lower bounds $f^{l,*}_{uv} $ and $f^{l,\xi}_{uv} $ for the \emph{lowersaxony} instance to $0$ if $u=s$ or $v=t$. 

\subsection{Experimental results}\label{subsec:results}

As mentioned in the previous section, we choose subset size $|\mathcal{P'}| = 10$. For $\Gamma'$, we use $0.1, 0.2$, and $0.3$, i.e., the allowed summed deviations to the nominal scenario lie within a range of $10$ \%, $20$ \%, and $30$ \%, respectively.
As for the uncertainty set sizes, we use $|\mathcal{U}(\Gamma)| \in \{5,10,50,100,200,500\}$ since the differences in the results become smaller for the larger sizes.
Due to the complexity of the problem, we set a time limit of $24$ hours for executing the complete algorithm while ensuring full iterations of the master problem and sub-problem.
This means that if the time limit is not reached after solving the master problem (or sub-problem), we do not interrupt the computation of the sub-problem (respectively, master problem), even if the time limit is exceeded in the meantime.

We set a time limit of $30$ minutes for the master problem.
If no feasible solution is obtained within this time, the algorithm execution continues until one is found. For the sub-problem, it is sufficient to limit the time to $3$ minutes. Here, we take advantage of the fact that it can be solved individually for each scenario instead of considering them all at once. This significantly reduces the computational burden, but it also means that the solution time of the sub-problem scales with the number of scenarios, as they are solved sequentially, although this could be overcome by parallelization. For the naive approach, we use a maximum of one hour per scenario.

\begin{table}[h]
\centering
\scalebox{1}{
 \begin{tabular}{lrrrrr}
 \toprule
 model &  small &  lowersaxony &  human &  mouse &   gte \\
 \midrule
               \MA       & 382.56    & 6200.08     &  3687.06   &  78.04    & 11289.26 \\
               \LA       & 52873.67  & 47353.45    & 60309.20   & 84434.52  & 66440.38 \\
               naive    & 788.61    & 1817.98    & 2481.08 & 831.86  & 3825.67 \\
 \bottomrule
 \end{tabular}
 }
\caption{Runtimes for the different formulations and instances in seconds, averaged over all uncertainty set sizes and values for $\Gamma'$.}
\label{tab:runtime}
\end{table}

As already indicated, the adjustable version of MWIFDP is computationally challenging, which is what motivates the employment of the column-and-constraint generation algorithm.
Indeed, the master problem alone, consisting of an adjustable MWIFDP (with a reduced number of scenarios that increases as iterations progress), is in itself computationally demanding. While in \MA, there is only one set of non-adjustable variables, and thus $x$ and $w$ can be chosen individually depending on the scenario, in \LA, both $y$ and $x$ must be feasible (and in the best case optimal) for all scenarios.
This leads to the less adjustable formulation \LA{} being significantly harder to solve, which is reflected in the runtimes and number of iterations reported in Table \ref{tab:runtime} and Table \ref{tab:iterations}, respectively.
The numbers provided refer to the average of all uncertainty set sizes $|\mathcal{U}(\Gamma)|$ and values for $\Gamma'$.
Note that we use a pre-solving phase to initialize the value for the lower bound, which we do not count in the number of iterations.
More precisely, after initializing $|\mathcal{U}(\Gamma)|$, we solve the master problem without upper bounds on the flow values and also without $\mathcal{W}$ in the objective function (corresponding to Lemma \ref{lemma:4}).
The resulting value for the number of paths is used to initialize $LB$.
\begin{table}[h]
\centering
\scalebox{1}{
 \begin{tabular}{lrrrrr}
 \toprule
 model &  small &  lowersaxony &  human &  mouse &   gte \\
 \midrule
               \MA       & 2.00    & 4.72     &  1.72   &  1.17    & 2.94 \\
               \LA       & 31.33  & 30.39    & 30.22   & 39.95  & 36.44 \\
 \bottomrule
 \end{tabular}
 }
\caption{Iterations for the different formulations and instances, averaged over all uncertainty set sizes and values for $\Gamma'$.}
\label{tab:iterations}
\end{table}

While \MA{} rarely reaches the time limit and usually requires only a few iterations to converge, the opposite is true for \LA.
The particularly low mean runtimes of \MA{} for \emph{small} and \emph{mouse} can be explained by the small graph size and simple structure of the graph (very long paths without branches), respectively.
For \LA, however, these are dominated by the additional challenge incurred by the less adjustable master problem.
Moreover, the fact that with \MA, we have two adjustable variable sets causes the sub-problem to no longer be solved in a few seconds, as with \LA, where only the optimal values for $w$ need to be computed.
However, since we can consider the sub-problem individually for each scenario, the computational requirements are still moderate.
Nevertheless, the fact that we do not solve the scenarios in parallel means that the runtime for \MA{} can be significantly longer for larger uncertainty set sizes.
As a consequence, some of the solutions showcased below are not necessarily optimal. 

\begin{figure}[h!]
	\begin{center}
        \begin{subfigure}{0.49\linewidth}
	   \resizebox{1\textwidth}{!}{
        \includegraphics[width=1\linewidth]{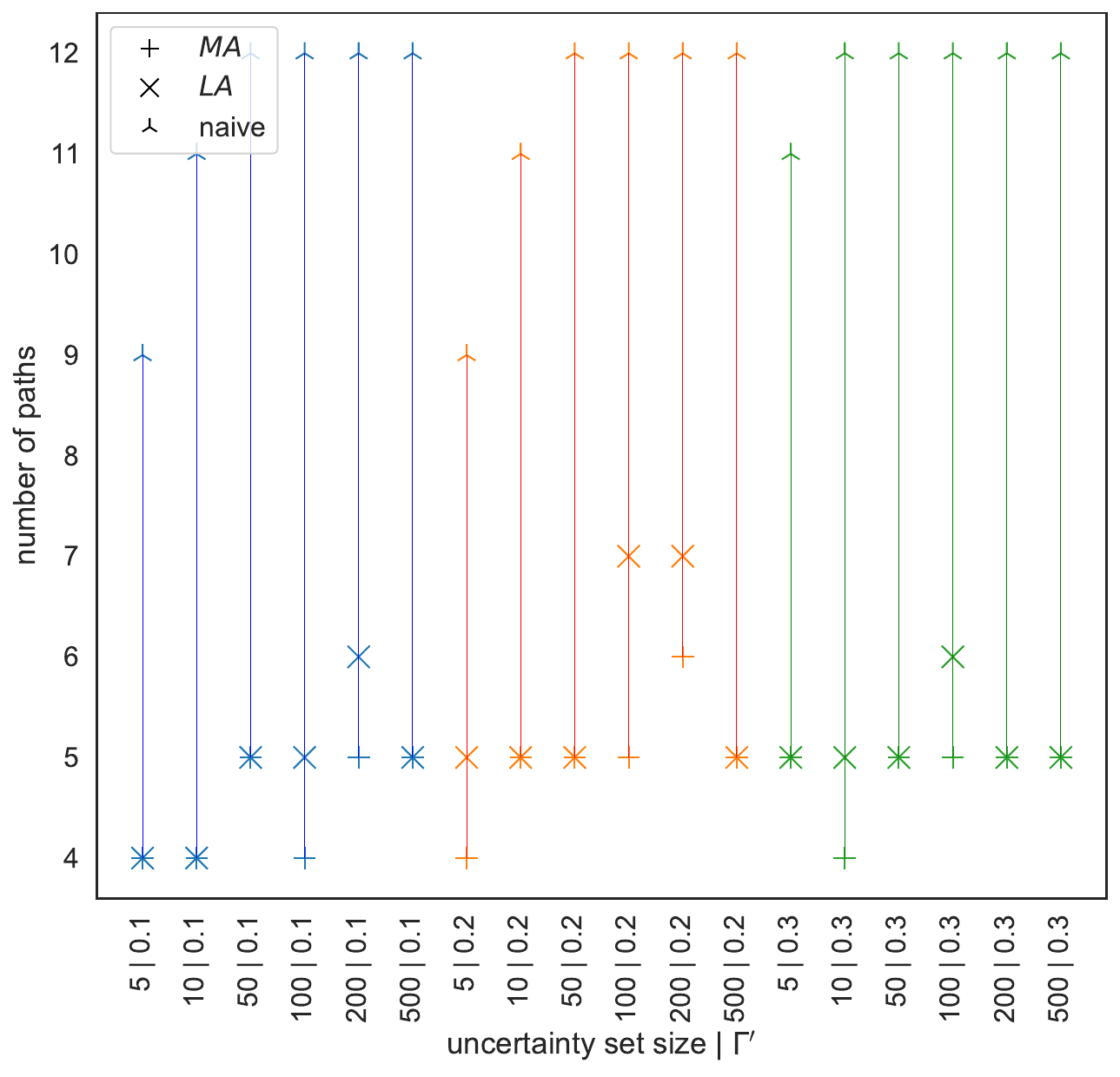}
        }
        \caption{Values of $\mathcal{Y} := \sum_i y_i$}
        \label{fig:res_small_y}
\end{subfigure}
\begin{subfigure}{0.49\linewidth}
	\resizebox{1\textwidth}{!}{
\includegraphics[width=1\linewidth]{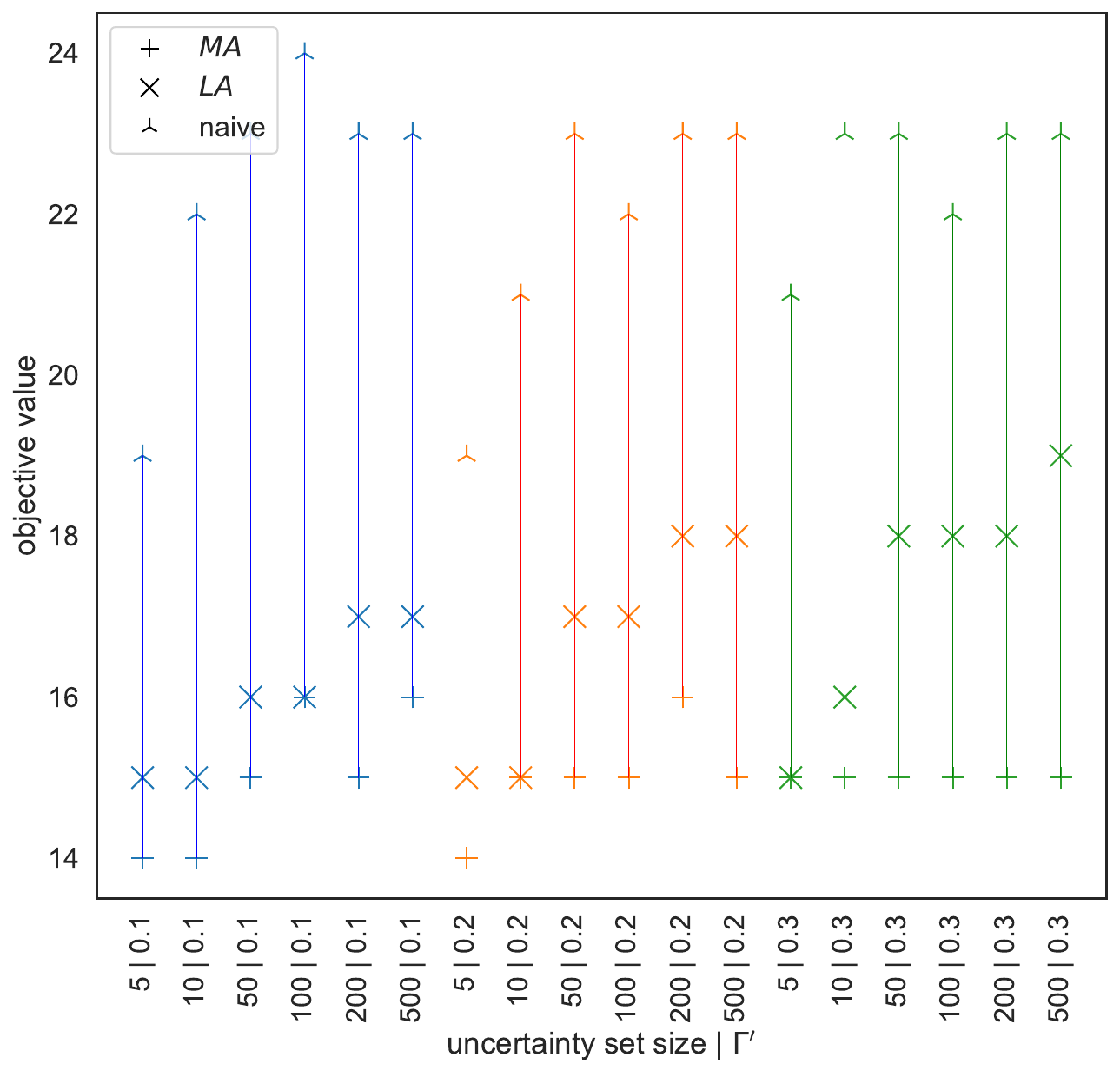}
        }
        \caption{Objective values}
        \label{fig:res_small_obj}
		\end{subfigure}
\end{center}
	\caption{Results of \MA, \LA{} and the naive approach for instance set \emph{small} for the different uncertainty set sizes $ |\mathcal{U}(\Gamma)|$ and values for $\Gamma'$.}
	\label{fig:res_small}
	\end{figure}	

For the sake of clarity, we only show detailed results for the instance sets \emph{small}, \emph{lowersaxony}, and \emph{human}.
All other results can be found in Appendix \ref{app:results}.
For \emph{small}, Figure \ref{fig:res_small} shows both the values for $\sum_i y_i := \mathcal{Y}$ (number of paths) and the objective values for the different uncertainty set sizes and values for $\Gamma$.
The objective values for \emph{lowersaxony} and \emph{human} are shown in Figures \ref{fig:res_sax} and \ref{fig:res_human}, respectively.
As we can see, the values for \MA{} are consistently the lowest and those of the naive approach the highest.
This also aligns with theoretical reasoning, which states that the less adjustable formulation \LA{} can never achieve better objective values than \MA{} and is bounded from above by the objective function value of the naive approach. 
These theoretical properties apply to the objective value but not necessarily to the number of paths.
Nevertheless, the experimental results also show the same behavior for the latter.

 \begin{figure}[h!]
	\begin{center}
        \begin{subfigure}{0.49\linewidth}
	   \resizebox{1\textwidth}{!}{
        \includegraphics[width=1\linewidth]{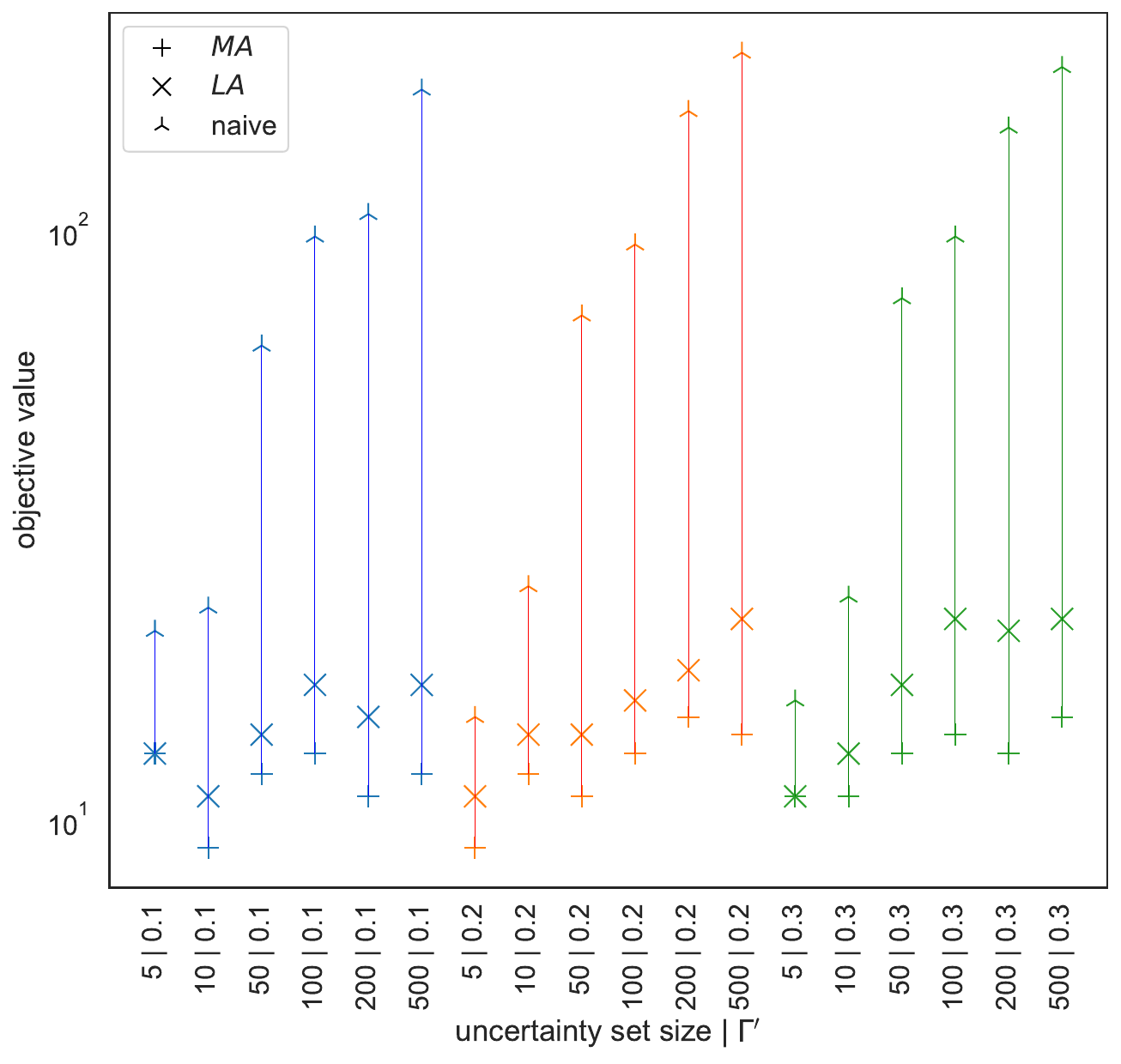}
        }
        \caption{Results for instance set \emph{lowersaxony}}
        \label{fig:res_sax}
\end{subfigure}
\begin{subfigure}{0.49\linewidth}
	\resizebox{1\textwidth}{!}{
\includegraphics[width=1\linewidth]{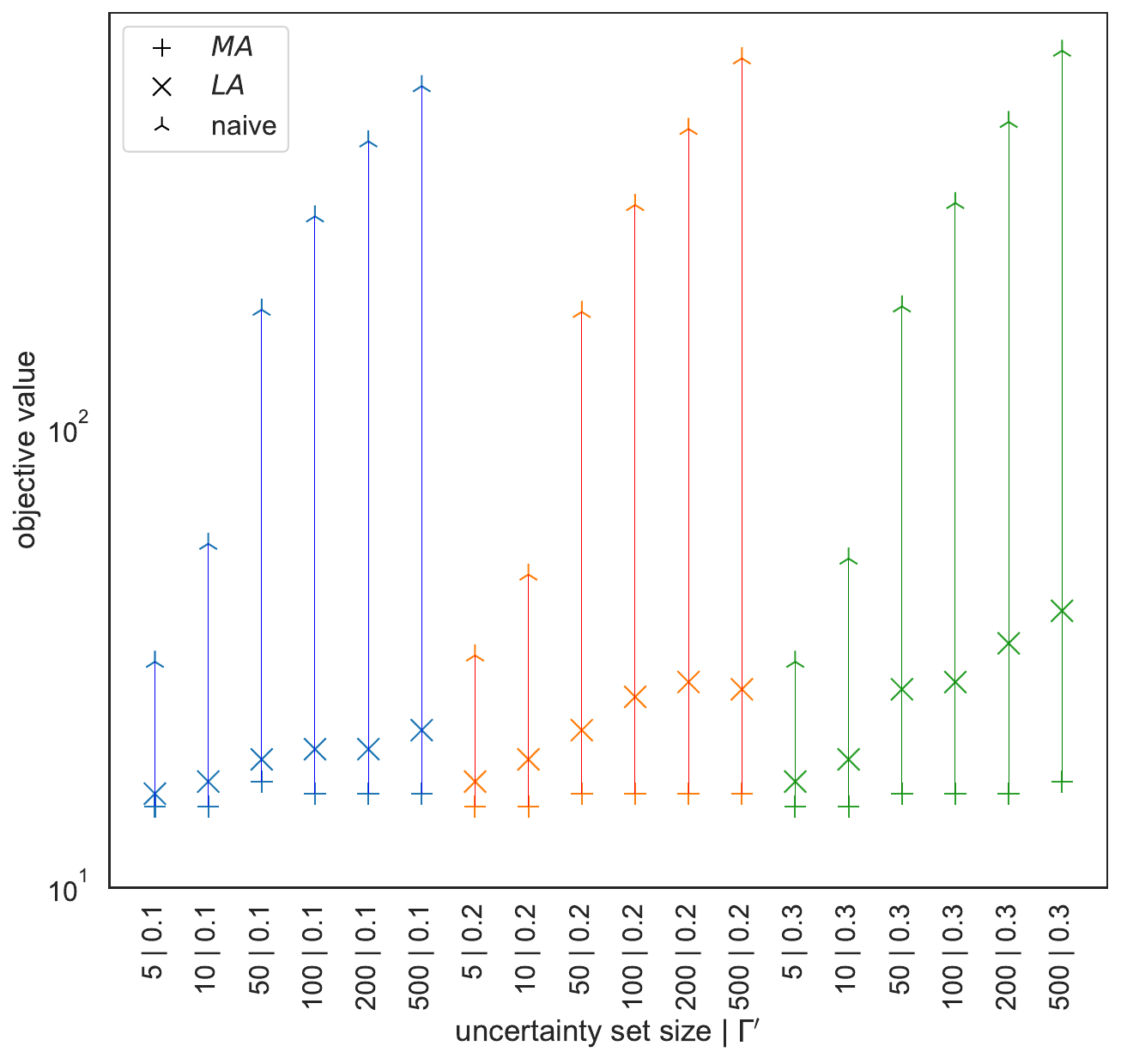}
        }
        \caption{Results for instance set \emph{human}}
        \label{fig:res_human}
		\end{subfigure}
\end{center}
	\caption{Objective values of \MA, \LA{} and the naive approach for the different uncertainty set sizes $ |\mathcal{U}(\Gamma)|$ and values for $\Gamma'$ (logarithmic scale).}
	\label{fig:res_obj}
	\end{figure}		

Since we use a set of $s$-$t$-paths of size $| \mathcal{P'} |$ to create a scenario, we know that we can decompose each scenario into a maximum of $| \mathcal{P'} |$ $s$-$t$-paths.
This means that in \MA, $\sum_i y_i$ is bounded by $| \mathcal{P'} |$ since we only have to determine the maximum number of paths that is sufficient per scenario.
For \LA, in addition to the number, the paths themselves must also be specified, forming a feasible solution for all scenarios. 
Consequently, the union of the original $s$-$t$-paths of all scenarios is feasible, i.e., $\sum_i y_i$ is bounded by $ |\mathcal{U}(\Gamma)| \cdot | \mathcal{P'} |$.
With this theoretical background, however, it can be seen from the results that these bounds are not even close to being reached. 
Depending on the instance, the maximum number of paths ranges between 5 and 8 for \MA{}, 7 and 19 for \LA, and 12 to 652 for the naive approach, while the theoretical upper bound is 5000. 

In terms of the summed weights $\mathcal{W}$, the results of \MA{} and the naive approach are similar (between $6$ and $12$) and hardly vary for different values of $ |\mathcal{U}(\Gamma)|$ and $ \Gamma'  $. 
This can be explained by the fact that with both methods, the paths and weights can be adjusted to each other and chosen to suit the scenario.
In contrast, \LA{} searches for a set of paths that fits all scenarios. 
Consequently, it becomes harder to match the paths with the different values for $w$ to the individual scenario, which leads to significantly higher values of $\mathcal{Y}$ and $\mathcal{W}$ (between $6$ and $21$) and also to both increasing with growing $ |\mathcal{U}(\Gamma)|$ and $ \Gamma' $.

When looking at $\mathcal{Y}$, we see a similar picture for \MA{} as we did before with $\mathcal{W}$.
Due to the more comprehensive adjustability, the values are relatively small (between $3$ and $8$), and the increase with growing $ |\mathcal{U}(\Gamma)|$ and $  \Gamma'$ is also modest.
The opposite is the case with the naive approach. 
Since the set of paths is generated by the union of the solutions for the individual scenarios, in order to be feasible for all scenarios (as in \LA), $\mathcal{Y}$ increases with $ |\mathcal{U}(\Gamma)|$ (up to a value of $652$ and significantly more than in \LA), which also leads to the objective value being dominated by $\mathcal{Y}$.
This is particularly noticeable with more complex (but not necessarily larger) graphs, where the number of different paths (and thus the objective value) is significantly higher and can be well observed in Figures \ref{fig:res_obj} and \ref{fig:res_naiveV1}.
In the latter, one can see how the ratio of the number of paths between the naive approach and \MA{} increases significantly for the more complex graphs and with larger values for $ |\mathcal{U}(\Gamma)|$ (depending on the instance, the naive approach requires up to $104$ times more paths than \MA).
This underlines the advantage of our adjustable models, which provide a substantial improvement compared to the naive approach.
Even the less adjustable version \LA{} leads to a significant reduction in the number of paths, which can be seen in Figure \ref{fig:res_V2V1}.
It shows the box plots for the five different instance classes.
These associated values are calculated from the difference in the number of paths of \LA{} and \MA{} divided by the difference in the number of paths of the naive approach and \MA.
In other words, it indicates as a percentage how close the $\mathcal{Y}$ value of version \LA{} is to the naive approach starting from \MA.
As can be seen in Figure~\ref{fig:res_V2V1}, almost all values are below $0.15$.
The only exception is instance \emph{gte}, for which the corresponding values range between $0.2$ and $0.4$, which is still a significant improvement compared to the naive approach.

 \begin{figure}[h!]
	\begin{center}
        \begin{subfigure}{0.475\linewidth}
	   \resizebox{1\textwidth}{!}{
        \includegraphics[width=1\linewidth]{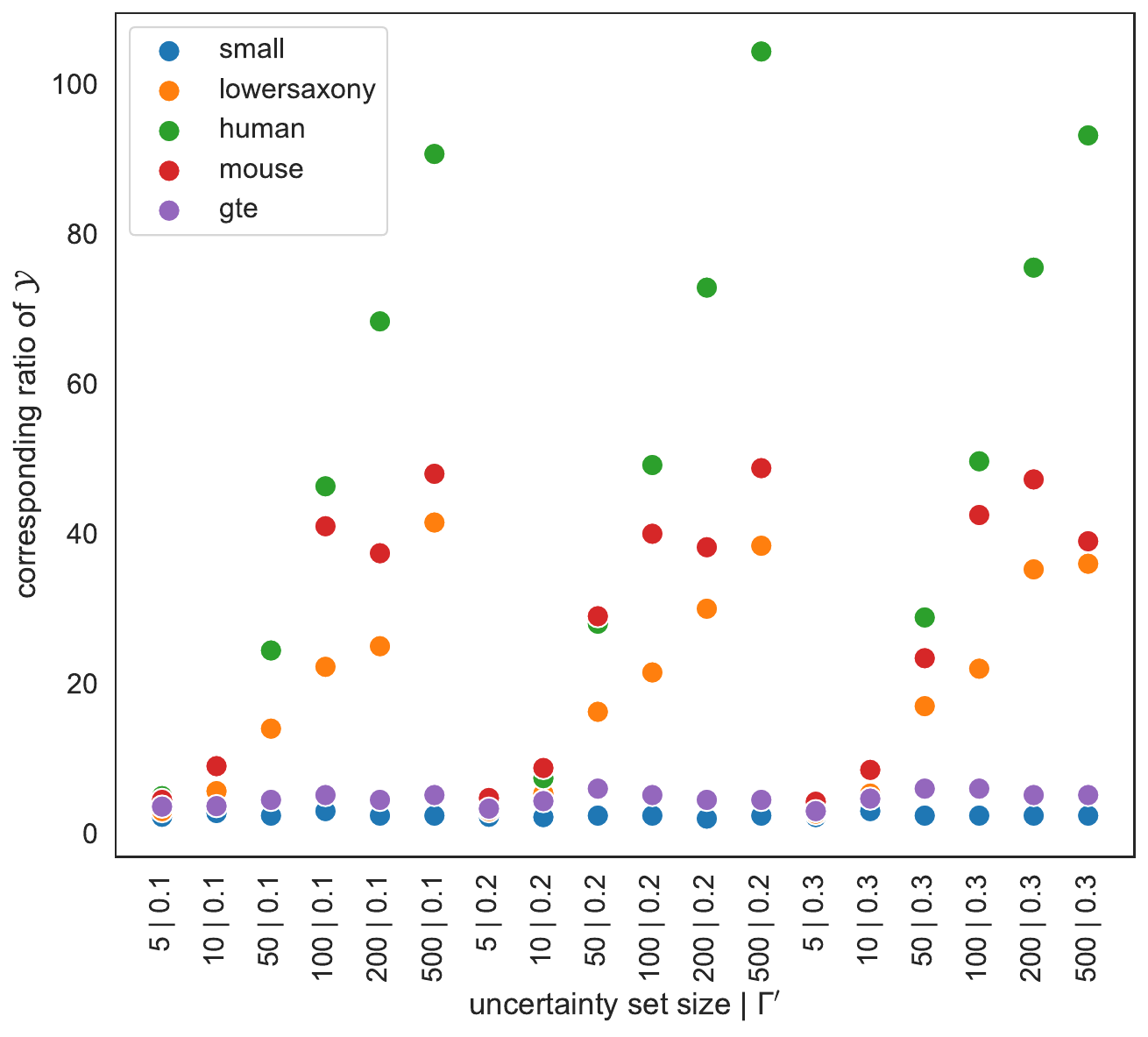}
        }
        \caption{$\mathcal{Y}$ of the naive approach divided by $\mathcal{Y}$ of \MA{} for the different uncertainty set sizes $ |\mathcal{U}(\Gamma)|$ and values for $\Gamma'$, i.e., $\frac{\mathcal{Y}^{\textup{naive}}}{\mathcal{Y}^{\MA}}$}
        \label{fig:res_naiveV1}
\end{subfigure}
\hfill 
\begin{subfigure}{0.475\linewidth}
	\resizebox{1\textwidth}{!}{
\includegraphics[width=1\linewidth]{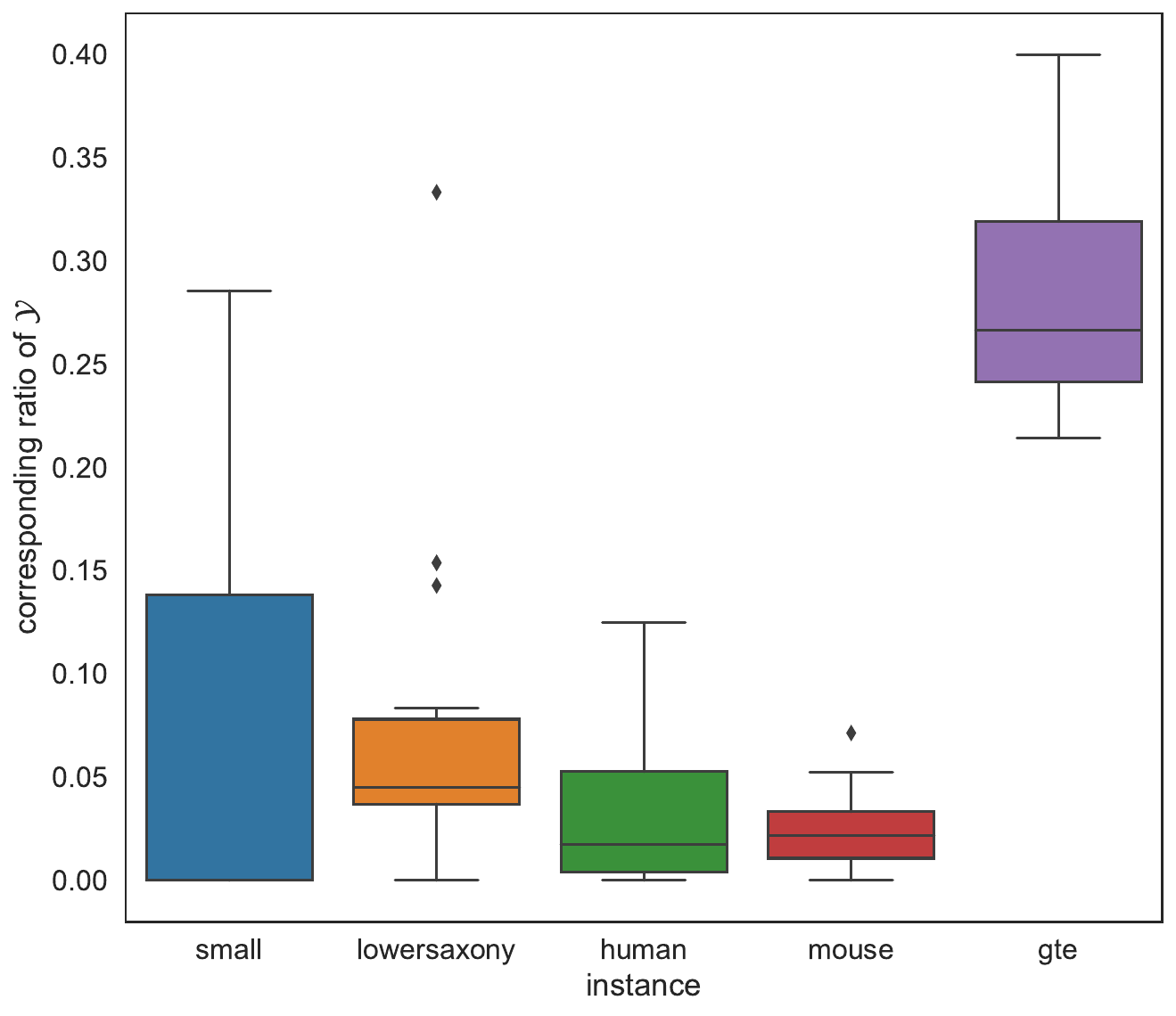}
        }
        \caption{Difference in $\mathcal{Y}$ of \LA{} and \MA{} divided by the difference in $\mathcal{Y}$ of the naive approach and \MA, i.e., $\frac{\mathcal{Y}^{\LA} -\mathcal{Y}^{\MA}}{\mathcal{Y}^{\textup{naive}}-\mathcal{Y}^{\MA}}$}
        \label{fig:res_V2V1}
		\end{subfigure}
\end{center}
	\caption{Values of $\mathcal{Y}:=\sum_i y_i$ (number of paths) of the three algorithms in relation to each other for the different instance sets.}
	\label{fig:res_y}
	\end{figure}

To summarize the results, it can be noted that the adjustability has a considerable advantage over the non-adjustable model, and this is clearly reflected in the number of paths.
Furthermore, even in the less adjustable \LA{} case, our models deliver significantly better results than the naive approach at the expense of dealing with more computationally challenging problems.

\section{Conclusions}\label{sec:concl}

In this paper, we address the minimum flow decomposition problem and consider it from the perspective of robust optimization. Unlike typical bioinformatics applications that focus on robustness against measurement errors, we explore classical robustness concepts from the literature. Our focus also lies on applications related to transportation planning problems, such as optimizing vehicle utilization in public transport.

Motivated by these applications, we generalize the problem to the weighted inexact case with lower and upper bounds on the flow values. We investigate the complexity of the resulting problem and show that we can solve the special case where we only minimize the weights in polynomial time. Moreover, we introduce the concept of robust flow decomposition by incorporating uncertain bounds. Here, we show that arbitrary uncertainty sets can be reduced to finite, discrete uncertainty sets and that for strict robustness, the uncertainty set reduces to a single worst-case scenario.



To overcome the conservatism of strict robustness, and 
motivated by practical applications, we make use of adjustable robustness.
Consequently, we present two different adjustable problem formulations.
In the first, only the number of paths needs to be specified beforehand. In the second formulation, both the number and the specific paths must be decided before the scenario is revealed.

Our computational study provides a proof of concept, demonstrating the benefits of adjustability in settings under uncertainty when compared to non-adjustable models, which is clearly reflected in the number of paths.
Even the less adjustable model yields significantly better results than the approach in which the problem is solved separately for each scenario, and all paths with corresponding weights are aggregated into a pool.
Our findings highlight the substantial advantages of our adjustable models.

However, the gained flexibility comes at the cost of higher computational requirements.
Thus, focusing on specialized solution methods that aim at improving runtimes would be a valuable contribution to future research. In particular, methods that can influence the choice of effective scenarios, both for initializing the algorithm and during its progression, based on external factors (such as data, as proposed in \cite{goerigk2023data}) could play a significant role in improving computational performance. Moreover, exploring alternative uncertainty set formulations may expose not only better trade-offs in terms of robustness guarantees but also favor computational performance.
Finally, since previous concepts have mainly focused on dealing with data inaccuracies, and this is the first work to consider MFDs in the context of classical robustness concepts, this opens up additional fields of application and offers opportunities for further research.

\section*{Authors' declarations}

\paragraph{Conflicts of interest/Competing interests.}The authors declare no conflicting or competing interests.

\paragraph{Availability of data and material.}The code and data generated are available at \linebreak www.github.com/gamma-opt/robust-flow-decomposition.

\paragraph{Authors' contributions.}M.S. is the main author, taking responsibility for writing the manuscript, preparing all visual materials, developing the technical analyses, and executing the computational experiments. P.S. contributed by revising the manuscript, providing essential technical support for the analysis, and contributing to delimiting the scope and designing the computational studies. F.O. contributed by revising the manuscript, delimiting the scope, and designing the computational studies. 

\paragraph{Ethics approval, Consent to participate, Consent for publication, and Funding}Not applicable.

\printbibliography

\appendix

\section{Adjustable formulations}\label{app:adj}
\subsection{Master problem and sub-problem for \MA}\label{app:adj1}

Formulations of the master problem \eqref{eq:1VM} and sub-problem \eqref{eq:1VS} for \MA{} in Algorithm \ref{alg:1V} in Section \ref{sec:adj}.

\begin{mini!}
{x,y,w}{\sum_{i=1}^{\overline{K}} y_i + \mathcal{W} \label{eq:1VM0}}
{\label{eq:1VM}}{}
\addConstraint{\sum_{i=1}^{\overline{K}} w^{\xi}_i }{\leq \mathcal{W}, \label{eq:1VM9}}{ \forall\; \xi \in \overline{\mathcal{U}}}
\addConstraint{\sum_{v:(s,v) \in E} x^{\xi}_{svi}}{=y_i, \label{eq:1VM1}}{ \forall\; i \in \{1,...,\overline{K}\}, \xi \in \overline{\mathcal{U}}}
\addConstraint{\sum_{u:(u,t) \in E} x^{\xi}_{uti}}{=y_i, \label{eq:1VM2}}{ \forall\; i \in \{1,...,\overline{K}\}, \xi \in \overline{\mathcal{U}}}
\addConstraint{\sum_{(u,v) \in E} x^{\xi}_{uvi} - \sum_{(v,w) \in E} x^{\xi}_{vwi}}{=0, \label{eq:1VM3}}{ \forall\; i \in \{1,...,\overline{K}\}, v \in V \setminus \{s,t\}, \xi \in \overline{\mathcal{U}}}
\addConstraint{\sum_{i \in \{1,...,\overline{K}\}} w^{\xi}_i x^{\xi}_{uvi}}{\geq f^{l,\xi}_{uv}, \label{eq:1VM4}}{\forall\; (u,v) \in E, \xi \in \overline{\mathcal{U}}}
\addConstraint{\sum_{i \in \{1,...,\overline{K}\}} w^{\xi}_i x^{\xi}_{uvi}}{\leq f^{u,\xi}_{uv}, \label{eq:1VM5}}{\forall\; (u,v) \in E, \xi \in \overline{\mathcal{U}}}
\addConstraint{\mathcal{W}}{\in \mathbb{Z}^+  \label{eq:1VM10}}
\addConstraint{w^{\xi}_i}{\in \mathbb{Z}^+,  \label{eq:1VM6}}{ \forall\; i \in \{1,...,\overline{K}\}, \xi \in \overline{\mathcal{U}}}
\addConstraint{x^{\xi}_{uvi}}{\in \{0,1\}, \quad \label{eq:1VM7}}{ \forall\; (u,v) \in E,  i \in \{1,...,\overline{K}\}, \xi \in \overline{\mathcal{U}}}
\addConstraint{y_i}{\in \{0,1\}, \label{eq:1VM8}}{\forall\; i \in \{1,...,\overline{K}\}.}
\end{mini!} 

\begin{mini!}
{x,y,w}{\sum_{i=1}^{\overline{K}} y_i + \mathcal{W} \label{eq:1VS0}}
{\label{eq:1VS}}{}
\addConstraint{\sum_{i=1}^{\overline{K}} w^{\xi}_i }{\leq \mathcal{W}, \label{eq:1VS9}}{ \forall\; \xi \in \mathcal{U}}
\addConstraint{\sum_{v:(s,v) \in E} x^{\xi}_{svi}}{=y_i, \label{eq:1VS1}}{ \forall\; i \in \{1,...,\overline{K}\}, \xi \in \mathcal{U}}
\addConstraint{\sum_{u:(u,t) \in E} x^{\xi}_{uti}}{=y_i, \label{eq:1VS2}}{ \forall\; i \in \{1,...,\overline{K}\}, \xi \in \mathcal{U}}
\addConstraint{\sum_{(u,v) \in E} x^{\xi}_{uvi} - \sum_{(v,w) \in E} x^{\xi}_{vwi}}{=0, \label{eq:1VS3}}{ \forall\; i \in \{1,...,\overline{K}\}, v \in V \setminus \{s,t\}, \xi \in \mathcal{U}}
\addConstraint{\sum_{i \in \{1,...,\overline{K}\}} w^{\xi}_i x^{\xi}_{uvi}}{\geq f^{l,\xi}_{uv}, \label{eq:1VS4}}{\forall\; (u,v) \in E, \xi \in \mathcal{U}}
\addConstraint{\sum_{i \in \{1,...,\overline{K}\}} w^{\xi}_i x^{\xi}_{uvi}}{\leq f^{u,\xi}_{uv}, \label{eq:1VS5}}{\forall\; (u,v) \in E, \xi \in \mathcal{U}}
\addConstraint{\mathcal{W}}{\in \mathbb{Z}^+  \label{eq:1VS10}}
\addConstraint{w^{\xi}_i}{\in \mathbb{Z}^+,  \label{eq:1VS6}}{ \forall\; i \in \{1,...,\overline{K}\}, \xi \in \mathcal{U}}
\addConstraint{x^{\xi}_{uvi}}{\in \{0,1\}, \quad \label{eq:1VS7}}{ \forall\; (u,v) \in E,  i \in \{1,...,\overline{K}\},  \xi \in \mathcal{U}.}
\end{mini!}    

\subsection{Adjustable formulation \LA}\label{app:adj2}

Adjustable MIP formulation \eqref{eq:2V} for \LA{} in Section \ref{sec:adj}, where the wait-and-see variables $w^{\xi}$ depend on the scenario $\xi \in \mathcal{U}$ (given by the bounds $f^{l,\xi}$ and $f^{u,\xi}$).

\begin{mini!}
{x,y,w}{\sum_{i=1}^{\overline{K}} y_i + \mathcal{W} \label{eq:2V0}}
{\label{eq:2V}}{}
\addConstraint{\sum_{i=1}^{\overline{K}} w^{\xi}_i }{\leq \mathcal{W}, \label{eq:2V9}}{ \forall\; \xi \in \mathcal{U}}
\addConstraint{\sum_{v:(s,v) \in E} x_{svi}}{=y_i, \label{eq:2V1}}{ \forall\; i \in \{1,...,\overline{K}\}}
\addConstraint{\sum_{u:(u,t) \in E} x_{uti}}{=y_i, \label{eq:2V2}}{ \forall\; i \in \{1,...,\overline{K}\}}
\addConstraint{\sum_{(u,v) \in E} x_{uvi} - \sum_{(v,w) \in E} x_{vwi}}{=0, \label{eq:2V3}}{ \forall\; i \in \{1,...,\overline{K}\}, v \in V \setminus \{s,t\}}
\addConstraint{\sum_{i \in \{1,...,\overline{K}\}} w^{\xi}_i x_{uvi}}{\geq f^{l,\xi}_{uv}, \label{eq:2V4}}{\forall\; (u,v) \in E, \xi \in \mathcal{U}}
\addConstraint{\sum_{i \in \{1,...,\overline{K}\}} w^{\xi}_i x_{uvi}}{\leq f^{u,\xi}_{uv}, \label{eq:2V5}}{\forall\; (u,v) \in E, \xi \in \mathcal{U}}
\addConstraint{\mathcal{W}}{\in \mathbb{Z}^+  \label{eq:2V10}}
\addConstraint{w^{\xi}_i}{\in \mathbb{Z}^+,  \label{eq:2V6}}{ \forall\; i \in \{1,...,\overline{K}\}, \xi \in \mathcal{U}}
\addConstraint{x_{uvi}}{\in \{0,1\}, \quad \label{eq:2V7}}{ \forall\; (u,v) \in E, i \in \{1,...,\overline{K}\} }
\addConstraint{y_i}{\in \{0,1\}, \label{eq:2V8}}{\forall\; i \in \{1,...,\overline{K}\}.}
\end{mini!}

The corresponding column-and-constraint generation approach (based on \cite{zeng2013solving}) to solve the adjustable problem \LA{} in \eqref{eq:2V} can be found in Algorithm \ref{alg:2V}.

As described in Section \ref{sec:adj}, if a given solution $(y^{k},x^{k})$ for the master problem leads to an infeasible sub-problem, we know that we can eliminate the combination of $(y^{k})$ and $(x^{k})$ from the solution space.
As a consequence, in line 13 of Algorithm \ref{alg:2V}, we add the corresponding constraint to the master problem \eqref{eq:2VM}.

\begin{algorithm}[h]
\caption{Column-and-constraint generation method for \LA} \label{alg:2V}
\begin{algorithmic}[1]
\State \textbf{given} a scenario set $\mathcal{U}$ and threshold $\epsilon$\;
\State Set $LB = - \infty$, $UB = \infty$, $k=1$ and initialize $\overline{\mathcal{U}}$ with one scenario from $\mathcal{U}$\;
\While{$UB-LB > \epsilon $}
    \State solve master problem \eqref{eq:2VM}, get optimal solution $(y^{k},x^{k},w^{k},\mathcal{W}^{k})$\;
    \State update lower bound $LB =   \sum_{i=1}^{\overline{K}} (y^{k})_i + \mathcal{W}^{k} $\;
    \State solve sub-problem \eqref{eq:2VS} with input $(y^{k},x^{k})$\;
    \If{we obtain an optimal solution $(\bar{w}^{k},\bar{\mathcal{W}}^{k})$}
        \State let $\xi^k$ be the worst-case scenario causing $\bar{\mathcal{W}}^{k}$\;
        \State update upper bound $UB =   \sum_{i=1}^{\overline{K}} (y^{k})_i + \bar{\mathcal{W}}^{k} $\;
    \Else
        \State let $\xi^k$ be the first scenario causing infeasibility\;
        \State add $\sum_{i: (y^{k})_i=0} y_i +\sum_{i: (x^{k})_i=0} x_i + \sum_{i: (y^{k})_i=1} (1-y_i )+ \sum_{i: (x^{k})_i=1} (1-x_i ) \geq 1$ to constraints of master problem \eqref{eq:2VM}\;
        \State add $\xi^k$ to $\overline{\mathcal{U}}$\;
    \EndIf    
\State $k=k+1$\;
\EndWhile
\State\Return $(y^{k},x^{k},\bar{w}^{k})$\;
\end{algorithmic}
\end{algorithm}

The described master problem and sub-problem of \LA{} in Algorithm \ref{alg:2V} are shown in \eqref{eq:2VM} and \eqref{eq:2VS}, respectively.

	\begin{mini!}
{x,y,w}{\sum_{i=1}^{\overline{K}} y_i + \mathcal{W} \label{eq:2VM0}}
{\label{eq:2VM}}{}
\addConstraint{\sum_{i=1}^{\overline{K}} w^{\xi}_i }{\leq \mathcal{W}, \label{eq:2VM9}}{ \forall\; \xi \in \overline{\mathcal{U}}}
\addConstraint{\sum_{v:(s,v) \in E} x_{svi}}{=y_i, \label{eq:2VM1}}{ \forall\; i \in \{1,...,\overline{K}\}}
\addConstraint{\sum_{u:(u,t) \in E} x_{uti}}{=y_i, \label{eq:2VM2}}{ \forall\; i \in \{1,...,\overline{K}\}}
\addConstraint{\sum_{(u,v) \in E} x_{uvi} - \sum_{(v,w) \in E} x_{vwi}}{=0, \label{eq:2VM3}}{ \forall\; i \in \{1,...,\overline{K}\}, v \in V \setminus \{s,t\}}
\addConstraint{\sum_{i \in \{1,...,\overline{K}\}} w^{\xi}_i x_{uvi}}{\geq f^{l,\xi}_{uv}, \label{eq:2VM4}}{\forall\; (u,v) \in E, \xi \in \overline{\mathcal{U}}}
\addConstraint{\sum_{i \in \{1,...,\overline{K}\}} w^{\xi}_i x_{uvi}}{\leq f^{u,\xi}_{uv}, \label{eq:2VM5}}{\forall\; (u,v) \in E, \xi \in \overline{\mathcal{U}}}
\addConstraint{\mathcal{W}}{\in \mathbb{Z}^+  \label{eq:2VM10}}
\addConstraint{w^{\xi}_i}{\in \mathbb{Z}^+,  \label{eq:2VM6}}{ \forall\; i \in \{1,...,\overline{K}\}, \xi \in \overline{\mathcal{U}}}
\addConstraint{x_{uvi}}{\in \{0,1\}, \quad \label{eq:2VM7}}{ \forall\; (u,v) \in E,  i \in \{1,...,\overline{K}\} }
\addConstraint{y_i}{\in \{0,1\}, \label{eq:2VM8}}{\forall\; i \in \{1,...,\overline{K}\}.}
\end{mini!}  

	\begin{mini!}
{x,y,w}{\sum_{i=1}^{\overline{K}} y_i + \mathcal{W} \label{eq:2VS0}}
{\label{eq:2VS}}{}
\addConstraint{\sum_{i=1}^{\overline{K}} w^{\xi}_i }{\leq \mathcal{W}, \label{eq:2VS9}}{ \forall\; \xi \in \mathcal{U}}
\addConstraint{\sum_{i \in \{1,...,\overline{K}\}} w^{\xi}_i x_{uvi}}{\geq f^{l,\xi}_{uv}, \label{eq:2VS4}}{\forall\; (u,v) \in E, \xi \in \mathcal{U}}
\addConstraint{\sum_{i \in \{1,...,\overline{K}\}} w^{\xi}_i x_{uvi}}{\leq f^{u,\xi}_{uv}, \label{eq:2VS5}}{\forall\; (u,v) \in E, \xi \in \mathcal{U}}
\addConstraint{\mathcal{W}}{\in \mathbb{Z}^+  \label{eq:2VS10}}
\addConstraint{w^{\xi}_i}{\in \mathbb{Z}^+,  \label{eq:2VS6}}{ \forall\; i \in \{1,...,\overline{K}\}, \xi \in \mathcal{U}.}
\end{mini!}

\section{Detailed Results}\label{app:results}

Complementing Section \ref{subsec:results}, the remaining results can be found in the following.
For \emph{mouse} and \emph{gte}, Figures \ref{fig:res_mouse} and \ref{fig:res_gte} show both the values for $\mathcal{Y}:=\sum_i y_i$ and the objective values for the different uncertainty set sizes and values for $\Gamma$.

\begin{figure}[h!]
	\begin{center}
        \begin{subfigure}{0.49\linewidth}
	   \resizebox{1\textwidth}{!}{
        \includegraphics[width=1\linewidth]{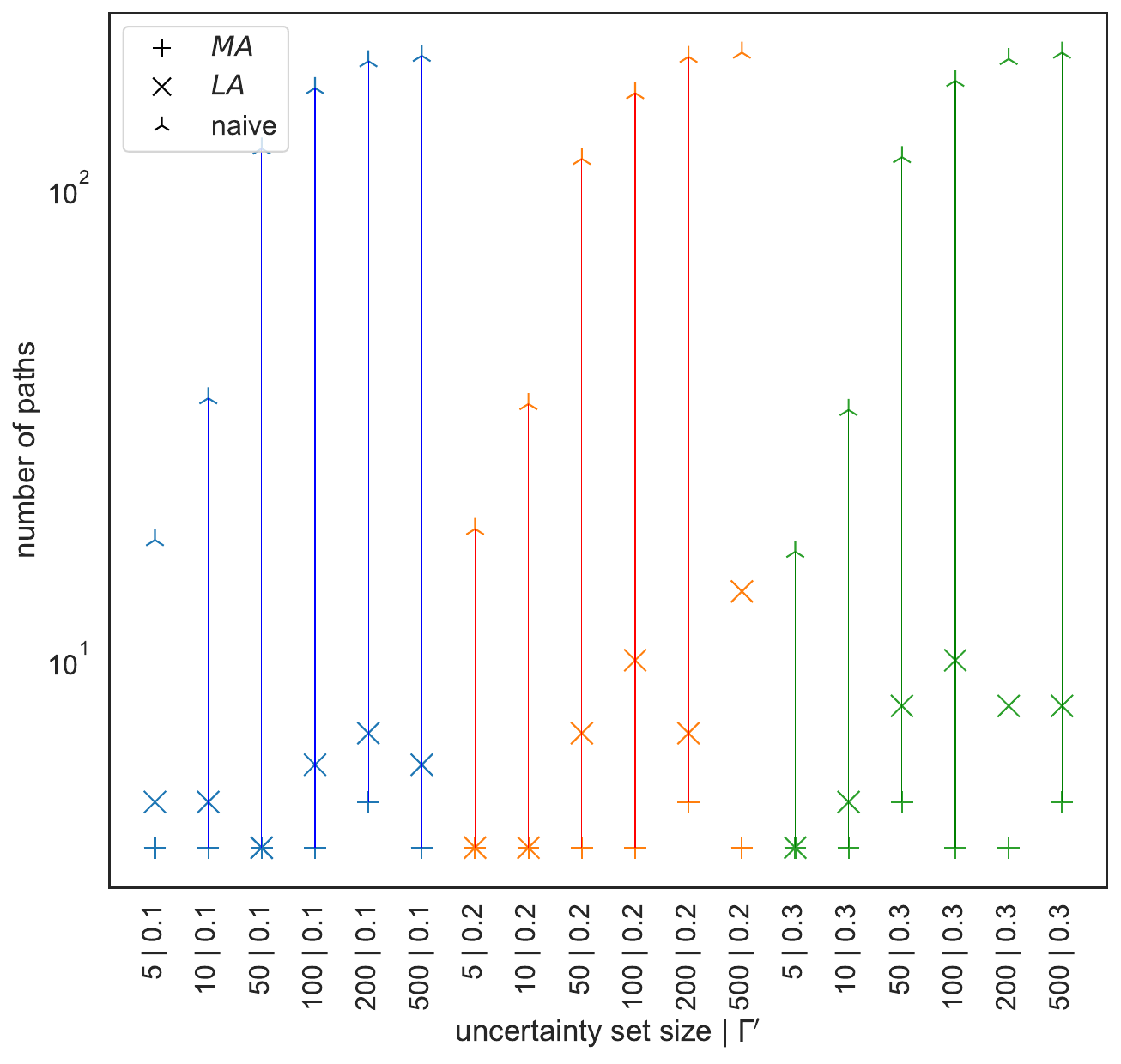}
        }
        \caption{Values of $\mathcal{Y}$ (logarithmic scale)}
        \label{fig:res_mouse_y}
\end{subfigure}
\begin{subfigure}{0.49\linewidth}
	\resizebox{1\textwidth}{!}{
\includegraphics[width=1\linewidth]{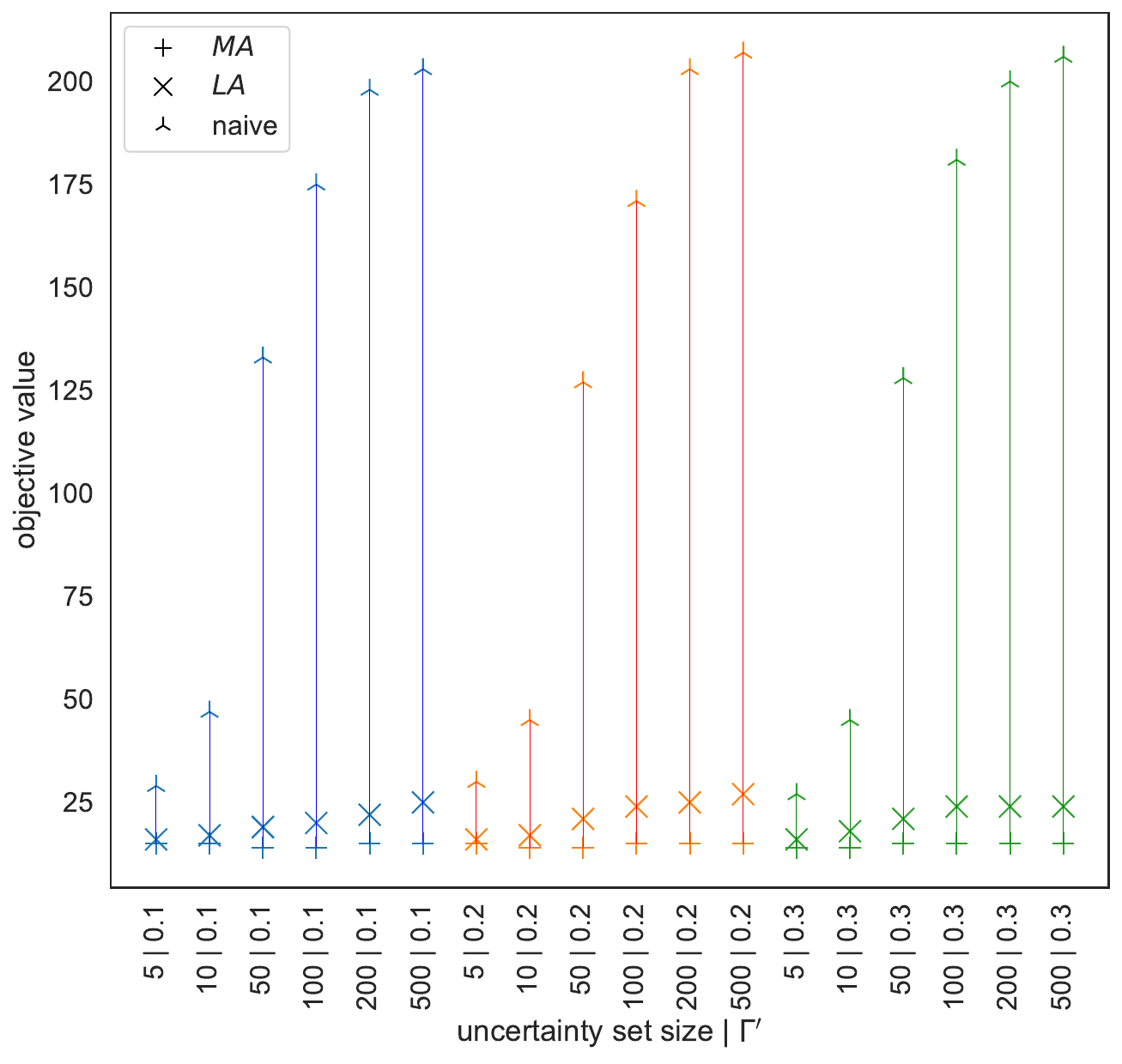}
        }
        \caption{Objective values}
        \label{fig:res_mouse_obj}
		\end{subfigure}
\end{center}
	\caption{Results of \MA, \LA{} and the naive approach for instance set \emph{mouse} for the different uncertainty set sizes $ |\mathcal{U}(\Gamma)|$ and values for $\Gamma'$.}
	\label{fig:res_mouse}
	\end{figure}	

 \begin{figure}[h!]
	\begin{center}
        \begin{subfigure}{0.49\linewidth}
	   \resizebox{1\textwidth}{!}{
        \includegraphics[width=1\linewidth]{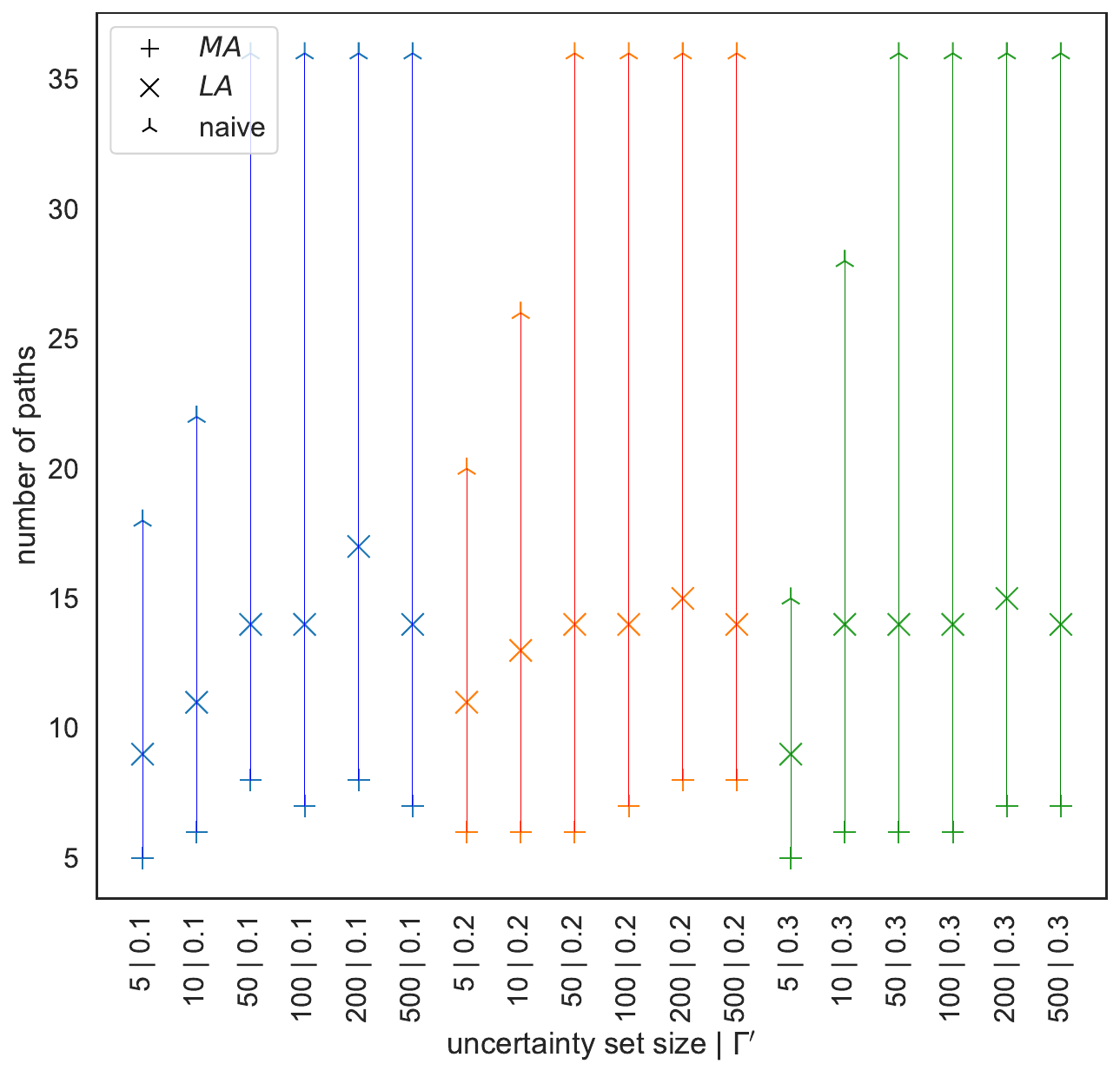}
        }
        \caption{Values of $\mathcal{Y}$}
        \label{fig:res_gte_y}
\end{subfigure}
\begin{subfigure}{0.49\linewidth}
	\resizebox{1\textwidth}{!}{
\includegraphics[width=1\linewidth]{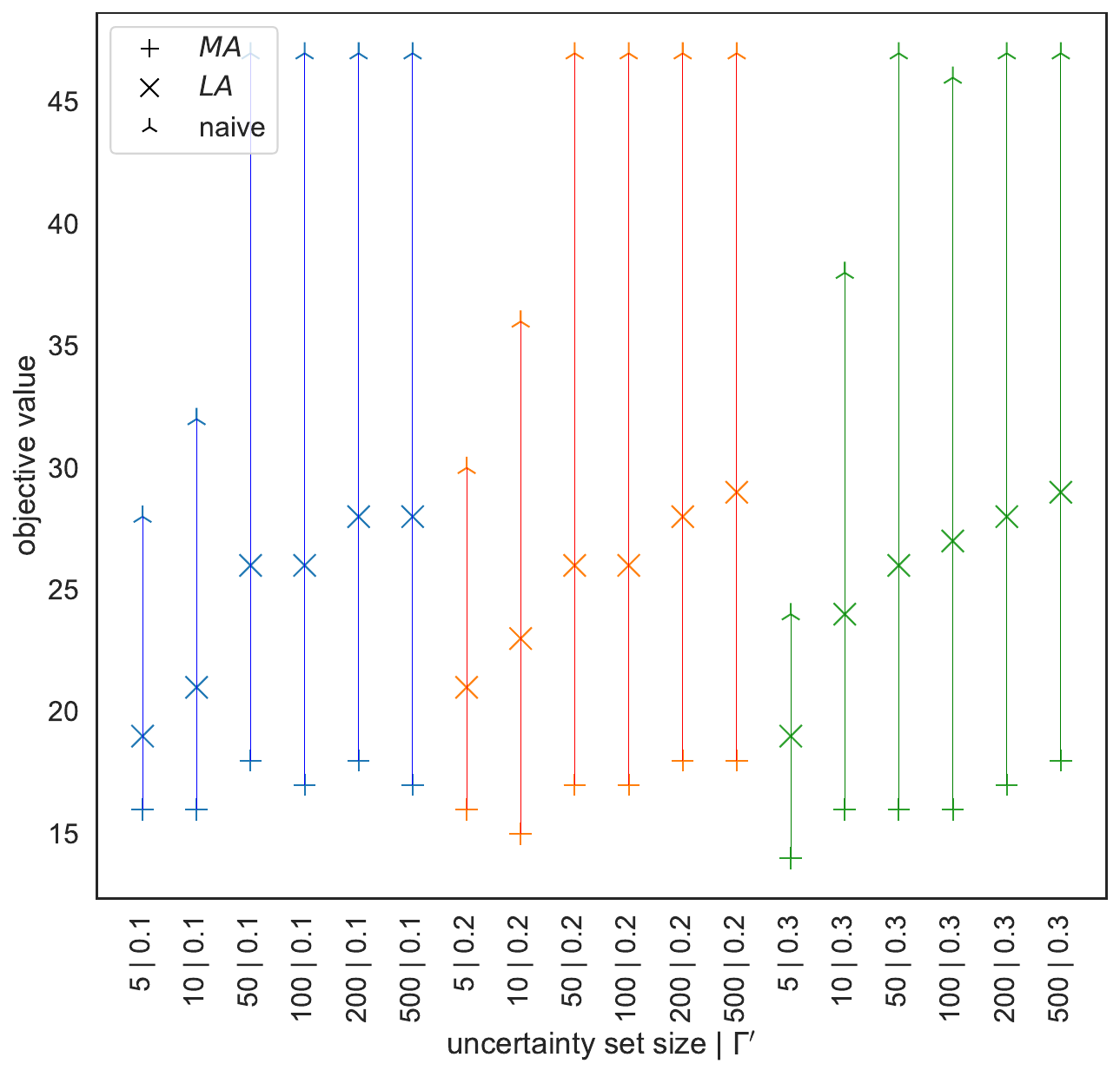}
        }
        \caption{Objective values}
        \label{fig:res_gte_obj}
		\end{subfigure}
\end{center}
	\caption{Results of \MA, \LA{} and the naive approach for instance set \emph{gte} for the different uncertainty set sizes $ |\mathcal{U}(\Gamma)|$ and values for $\Gamma'$.}
	\label{fig:res_gte}
	\end{figure}

The values for $\mathcal{Y}$ for \emph{lowersaxony} and \emph{human} are shown in Figures \ref{fig:res_saxy} and \ref{fig:res_humany}, respectively.

\begin{figure}[h!]
	\begin{center}
        \begin{subfigure}{0.49\linewidth}
	   \resizebox{1\textwidth}{!}{
        \includegraphics[width=1\linewidth]{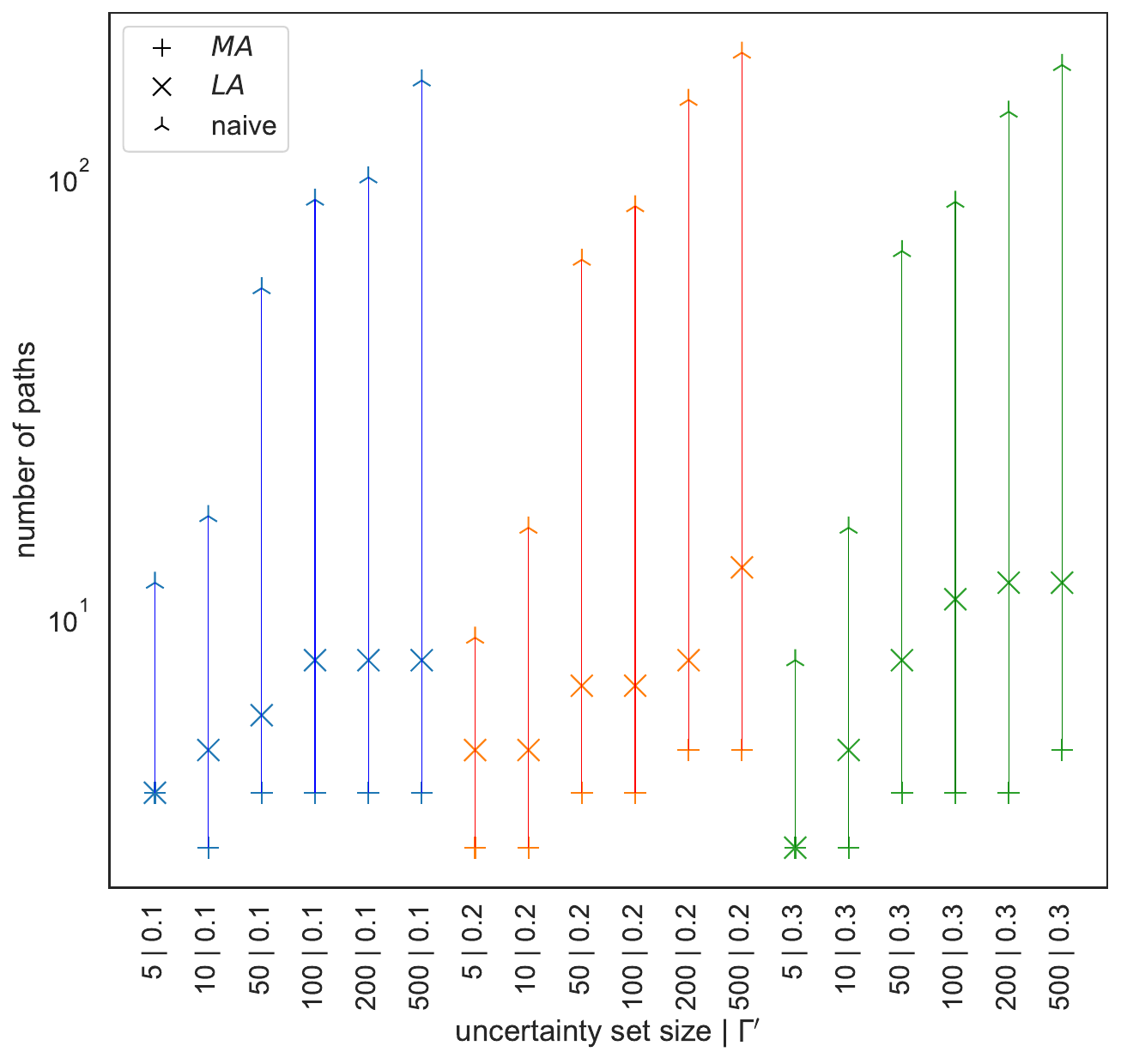}
        }
        \caption{Results for instance set \emph{lowersaxony}}
        \label{fig:res_saxy}
\end{subfigure}
\begin{subfigure}{0.49\linewidth}
	\resizebox{1\textwidth}{!}{
\includegraphics[width=1\linewidth]{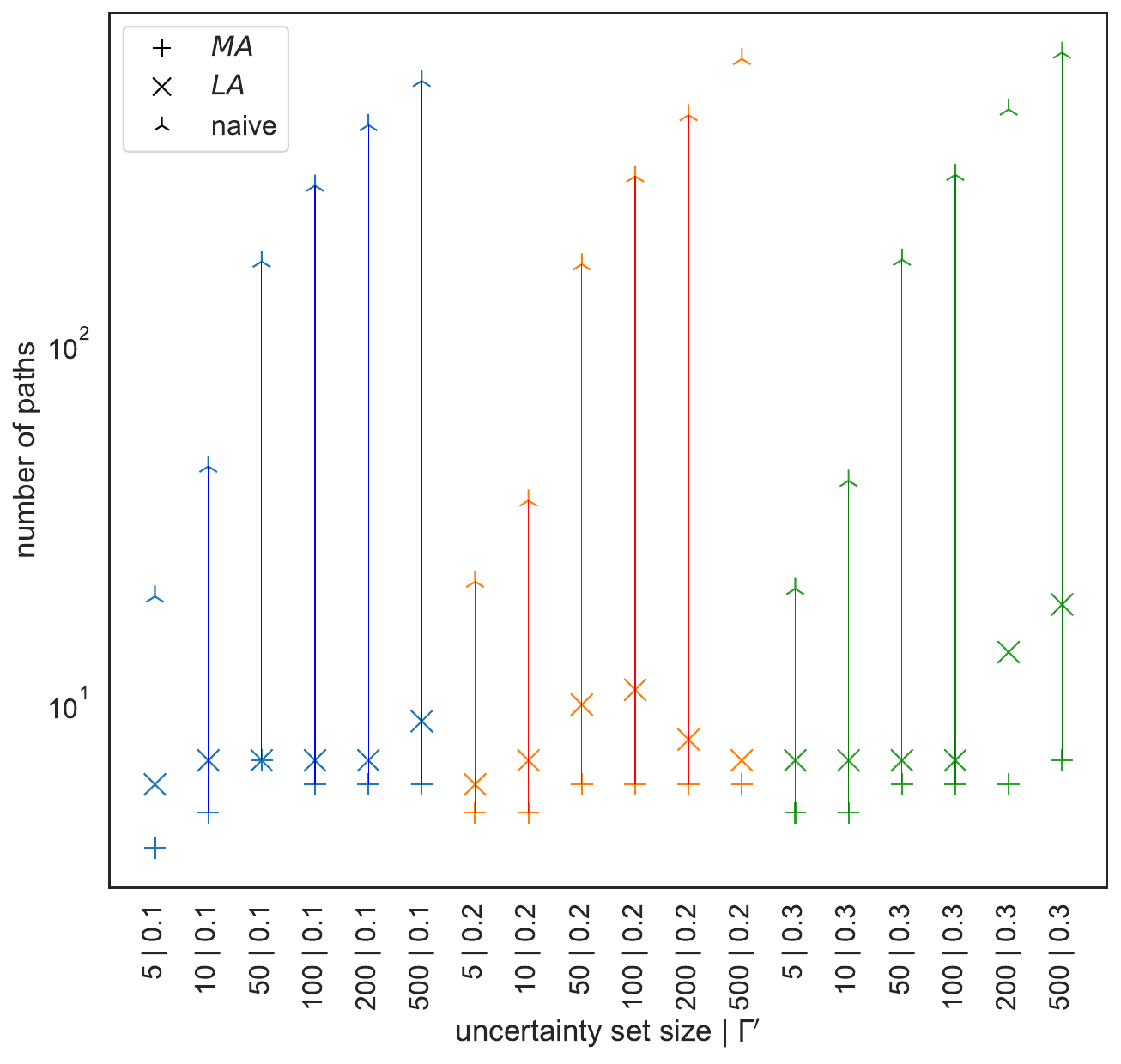}
        }
        \caption{Results for instance set \emph{human}}
        \label{fig:res_humany}
		\end{subfigure}
\end{center}
	\caption{Values for $\mathcal{Y}$ of \MA, \LA{} and the naive approach for the different uncertainty set sizes $ |\mathcal{U}(\Gamma)|$ and values for $\Gamma'$ (logarithmic scale).}
	\label{fig:res_yy}
	\end{figure}

\end{document}